\newcommand{\comma}{{},\,}
\newcommand{\col}{\!:\:\!}
\newcommand{\loc}{\!\::\!}
\newcommand{\bbN}{\mathbb N}
\newcommand{\bbR}{\mathbb R}
\newcommand{\bbZ}{\mathbb Z}
\newcommand{\mcA}{\mathcal A}
\newcommand{\mcB}{\mathcal B}
\newcommand{\mcC}{\mathcal C}
\newcommand{\mcD}{\mathcal D}
\newcommand{\mcE}{\mathcal E}
\newcommand{\mcG}{\mathcal G}
\newcommand{\mcH}{\mathcal H}
\newcommand{\mcI}{\mathcal I}
\newcommand{\mcJ}{\mathcal J}
\newcommand{\mcM}{\mathcal M}
\newcommand{\mcO}{\mathcal O}
\newcommand{\Ab}{\mathsf{Ab}}
\newcommand{\Ch}{\mathsf{Ch}}
\newcommand{\Diag}{\sSetI}
\newcommand{\simp}{\mathsf{s}}
\newcommand{\Set}{\mathsf{Set}}
\newcommand{\sSet}{\simp\Set}
\newcommand{\Ho}{\mathop\mathrm{Ho}\nolimits}
\newcommand{\map}{\operatorname{map}}
\newcommand{\op}{\mathrm{op}}
\newcommand{\Ra}{\Rightarrow}
\newcommand{\xra}[1]{\xrightarrow{#1}}
\newcommand{\xla}[1]{\xleftarrow{#1}}
\newcommand{\ra}{\rightarrow}
\newcommand{\la}{\leftarrow}
\newcommand{\hra}{\hookrightarrow}
\newcommand{\xlra}[1]{\xrightarrow{\ #1\ }}
\newcommand{\xlla}[1]{\xleftarrow{\ #1\ }}
\newcommand{\lra}{\longrightarrow}
\newcommand{\lla}{\longleftarrow}
\newcommand{\dra}{\xymatrix@1{\ar@{-->}[r]&}}
\newcommand{\cof}[1][]{\mathbin{\:\!\!\xymatrix@1@C=15pt{{}\ar@{ >->}[r]^{#1} & {}}}}
\newcommand{\fib}[1][]{\mathbin{\:\!\!\xymatrix@1@C=15pt{{}\ar@{->>}[r]^{#1} & {}}}}
\newcommand{\embed}[1][]{\mathbin{\:\!\!\xymatrix@1@C=15pt{{}\ar@{c->}[r]^{#1} & {}}}}
\newcommand{\rla}[1][]{\mathbin{\:\!\!\xymatrix@1@C=15pt{{}\ar@<2pt>[r]^{#1} & {}\ar@<2pt>[l]}}}
\newcommand{\pbscale}{.25}
\newcommand{\pboffset}{.5}
\newcommand{\xycorner}[3]{\save [];#2**{}?(\pbscale)="a",[];#1**{}?(\pbscale);"a"**{}?(\pboffset);"a"**\dir{-},[];#3**{}?(\pbscale);"a"**{}?(\pboffset);"a"**\dir{-} \restore}
\newcommand{\pb}[1][{1}]{\xycorner{[r]**{}?(#1)}{[dr]**{}?(#1)}{[d]**{}?(#1)}}
\newcommand{\po}[1][{1}]{\xycorner{[l]**{}?(#1)}{[ul]**{}?(#1)}{[u]**{}?(#1)}}
\newcommand{\coker}{\mathop\mathrm{coker}\nolimits}
\newcommand{\colim}{\mathop\mathrm{colim}}
\newcommand{\ef}{\mathop\mathrm{ef}\nolimits}
\newcommand{\ev}{\mathop\mathrm{ev}\nolimits}
\newcommand{\Hom}{\mathop\mathrm{Hom}\nolimits}
\newcommand{\id}{\mathop\mathrm{id}\nolimits}
\newcommand{\im}{\mathop\mathrm{im}\nolimits}
\newcommand{\sk}{\mathop\mathrm{sk}\nolimits}
\newcommand{\St}{\mathop\mathrm{St}\nolimits}
\newlength{\hlp}
\newcommand{\leftbox}[2]{{}\phantom{#1} \save []+L*+<.5pc>!!<0pt,\the\fontdimen22\textfont2>!L{#1#2} \restore}
\newcommand{\rightbox}[2]{{}\phantom{#2} \save []+R*+<.5pc>!!<0pt,\the\fontdimen22\textfont2>!R{#1#2} \restore}
\titleformat{\subsection}[runin]{\bfseries}{\thesubsection}{1ex}{}[.]
\newcommand{\heading}[1]{\subsection{#1}}
\let\c@equation\c@subsection
\theoremstyle{plain}
\newtheorem{thm}[subsection]{Theorem}
\newtheorem{theorem}[subsection]{Theorem}
\newtheorem*{theorem*}{Informal statement}
\newtheorem{corollary}[subsection]{Corollary}
\newtheorem{lemma}[subsection]{Lemma}
\newtheorem{lem}[subsection]{Lemma}
\newtheorem{assumption}[subsection]{Assumption}
\newtheorem{prop}[subsection]{Proposition}
\newtheorem{proposition}[subsection]{Proposition}
\theoremstyle{definition}
\newtheorem{Def}[subsection]{Definition}
\newtheorem{definition}[subsection]{Definition}
\newtheorem*{sdefinition}{Definition}
\newtheorem{remark}[subsection]{Remark}
\newtheorem{ex}[subsection]{Example}
\newtheorem*{example}{Example}
\newcommand{\conn}{\operatorname{conn}}
\DeclareSymbolFont{symbolsC}{U}{pxsyc}{m}{n}
\DeclareMathSymbol{\coloneqq}{\mathrel}{symbolsC}{"42}
\newcommand{\cofr}{\mathrm{cof}}
\newcommand{\fibr}{\mathrm{fib}}
\newcommand{\pt}{\operatorname{pt}}
\newcommand{\dev}{\mathop\mathrm{{ev}}\nolimits}
\newcommand{\sect}[1]{\POS[l]+R*!!<0pt,\the\fontdimen22\textfont2>{\vphantom{|}}="a";[]+L*!!<0pt,\the\fontdimen22\textfont2>{\vphantom{|}} \ar@<-5pt>@/_2pt/"a"_-{#1}}
\DeclareMathAlphabet{\mathsfsl}{OT1}{cmss}{m}{sl}
\newcommand{\OgsSet}{[{\mathcal{O}_G ^\mathsf{op}},{\mathsf{sSet}}]}
\newcommand{\GsSet}{G{-}\sSet}
\newcommand{\Ocat}{\mathcal{O}}
\newcommand{\Pst}[1]{P(#1)}
\newcommand{\Pnewst}{\Pst{\then}}
\newcommand{\Poldst}{\Pst{\then-1}}
\newcommand{\Polder}{P_m}
\newcommand{\Pzerost}{\Pst{0}}
\renewcommand{\Pi}{P_i}
\newcommand{\Pm}{P_{m}}
\newcommand{\Pmmo}{P_{m-1}}
\newcommand{\Yst}[1]{Y(#1)}
\newcommand{\towercompmap}[2]{#1(#2)}
\newcommand{\Engen}{WK(\pi,\then)}
\newcommand{\Kngen}{\overline WK(\pi,\then)}
\newcommand{\Kpin}{K(\pi,\then)}
\newcommand{\Kpinpo}{K(\pi,\then+1)}
\DeclareMathOperator\MCyltemp{{Cyl}}
\newcommand{\MCyl}[1]{\MCyltemp{({#1})}}
\newcommand{\varphist}[1]{\towercompmap{\varphi}{#1}}
\newcommand{\varphin}{\towercompmap{\varphi}{\then}}
\newcommand{\varphim}{\towercompmap{\varphi}{\them}}
\newcommand{\varphimmo}{\towercompmap{\varphi}{\them-1}}
\newcommand{\ellnmo}{\ell_{n-1}}
\newcommand{\elln}{\ell_n}
\newcommand{\ellm}{\towercompmap{\ell}{\them}}
\newcommand{\Knst}{\overline WK(\pi_\then,\then)}
\newcommand{\Enst}{WK(\pi_\then,\then)}
\newcommand{\Lnst}{K(\pi_\then,\then)}
\newcommand{\deltanst}{{\delta_{n*}}}
\newcommand{\face}[1]{\operatorname{face}_{#1}}
\newcommand{\Lm}{{K_m}}
\newcommand{\vertex}[1]{#1}
\newcommand{\them}{m}
\newcommand{\then}{n}
\newcommand{\thedim}{{n}}
\newcommand{\thedimm}{{k}}
\newcommand{\thedimmm}{{j}}
\newcommand{\stdsimp}[1]{\Delta^{#1}}
\newcommand{\horn}[2]{%
\mbox{$\xy
<0pt,-\the\fontdimen22\textfont2>;p+<.1em,0em>:
{\ar@{-}(0,0.1);(3,7)},
{\ar@{-}(3,7);(6,0.1)},
{\ar@{-}(3.2,7);(6.2,0.1)},
{\ar@{-}(3.4,7);(6.4,0.1)}
\endxy\;\!{}^{#1}_{#2}$}}
\newcommand{\Pnew}{{P_\thedim}}
\newcommand{\Pnewind}[1]{P_\thedim^{#1}}
\newcommand{\Pold}{{P_{\thedim-1}}}
\newcommand{\Pind}[1]{P^{#1}}
\newcommand{\pin}{{\pi_\thedim}}
\newcommand{\Kn}{{\overline WK_{\thedim}}}
\newcommand{\En}{{WK_\thedim}}
\newcommand{\Ln}{{K_\thedim}}
\newcommand{\kn}{{k_\thedim}}
\newcommand{\knst}{{k_{\thedim*}}}
\newcommand{\pn}{{p_\thedim}}
\newcommand{\pnst}{{p_{\thedim*}}}
\newcommand{\qn}{{q_\thedim}}
\newcommand{\jnst}{{j_*}}
\def\bo{\partial} 
\def\t{\tau}
\def\s{\sigma}
\def\co{\circ}
\newcommand{\dY}{Y}
\newcommand{\dX}{X}
\newcommand{\dM}{M}
\newcommand{\dpi}{\pi}
\newcommand{\dP}{P}
\newcommand{\dQ}{Q}
\newcommand{\dK}{K}
\newcommand{\dE}{E}
\newcommand{\dC}{{C}}
\newcommand{\dD}{D}
\newcommand{\dZ}{Z}
\newcommand{\dH}{H}
\newcommand{\ccat}{\mathcal C}
\newcommand{\icat}{\mathcal I}
\newcommand{\icatop}{\mathcal I^\op}
\newcommand{\jcat}{\mathcal J}
\newcommand{\OG}{\mathcal{O}_G}
\newcommand{\Z}{\mathbb{Z}}
\newcommand\diff{d}
\newcommand\cobo{\delta}
\newcommand\kkk{{k}}
\newcommand\Redu{\Rightarrow\!\!\!\!\Rightarrow}
\newcommand\lredu{\Leftarrow\!\!\!\!\Leftarrow}
\newcommand{\steq}{\Leftarrow\!\!\!\!\Leftarrow\!\!\!\!\Rightarrow\!\!\!\!\Rightarrow}
\newcommand{\acts}{\mathbin{\:\!\!\xymatrix@1@C=15pt{{} & {\phantom{|}} \ar@(ul,dl)[]_-{}}\!}}
\newcommand{\on}{{o_n}}
\newcommand{\onmo}{{o_{n-1}}}
\newcommand{\deltan}{{\delta_n}}
\DeclareMathOperator\dCone{{Cone}}
\newcommand{\sfP}{\mathsfsl{P}}
\newcommand{\fn}{{f_\thedim}}
\newcommand{\clm}{\bf \xymatrix@1@C=15pt{{}\ar@{~>}[r] & {}}}
\DeclareSymbolFont{symbolsC}{U}{pxsyc}{m}{n}
\DeclareMathSymbol{\coloneqq}{\mathrel}{symbolsC}{"42}
\newcommand{\cofrst}[1]{#1{}^{\cofr}}
\newcommand{\fibrst}[1]{#1{}^{\fibr}}
\newcommand{\susp}[2]{#1[#2]}
\newcommand{\towercomp}[2]{#1(#2)}
\newcommand{\Tm}{\towercomp{T}{\them}}
\newcommand{\Tmmoc}{\cofrst{\towercomp{T}{\them-1}}}
\newcommand{\Sm}{\towercomp{S}{\them}}
\newcommand{\Smmoc}{\cofrst{\towercomp{S}{\them-1}}}
\newcommand{\trunc}[2]{\operatorname{tr}_{#2} #1}
\newcommand{\An}{\towercomp{A}{\then}}
\newcommand{\Xn}{\towercomp{X}{\then}}
\newcommand{\setone}{G}
\newcommand{\settwo}{H}
\newcommand{\setthree}{D}
\newcommand{\setfour}{E}
\newcommand{\setfive}{F}
\newcommand{\calsetone}{\mathcal{G}}
\newcommand{\calsettwo}{\mathcal{H}}
\newcommand{\calsetthree}{\mathcal{D}}
\newcommand{\calsetfour}{\mathcal{E}}
\newcommand{\elttwo}{h}
\newcommand{\eltthree}{d}
\newcommand{\eltfour}{e}
\newcommand{\caleltone}{\alpha}
\newcommand{\caleltthree}{\delta}
\newcommand{\caleltfour}{\varepsilon}
\newcommand{\connhom}{\Delta}
\newcommand{\actmap}{a}
\newcommand{\sSetDiag}[1]{\sSet{-}#1}
\newcommand{\sSetI}{\sSetDiag\mcI}
\newcommand{\sSetJ}{\sSetDiag\mcJ}
\newcommand{\sSetOG}{\sSetDiag{\mcO_G}}
\newcommand{\ChDiag}[1]{\Ch{-}#1}
\newcommand{\ChI}{\ChDiag\mcI}
\newcommand{\ChJ}{\ChDiag\mcJ}
\newcommand{\AbDiag}[1]{\Ab{-}#1}
\newcommand{\AbI}{\AbDiag\mcI}
\newcommand{\Tow}{\mathsf{Tow}}
\newcommand{\Town}{\mathsf{Tow}_{\leq \then}}
\newcommand{\CI}{\mcC{-}\mcI}
\newcommand{\extend}[1]{\operatorname{ext}_{#1}}
\newcommand{\extn}{\extend{\then}}
\newcommand{\extz}{\extend{0}}
\newcommand{\interval}{I}
\renewcommand{\ef}{\mathrm{ef}}
\begin{document}
\author{M.\ Filakovsk\'{y}\thanks{Research of M.~F.~was supported by the Austrian Science Fund (FWF), Project P31312-N35 and partially supported by the Charles University project
PRIMUS/21/SCI/014.}\and L.\ Vok\v{r}\'{i}nek\thanks{Research of L.~V.~was supported by the Center of Excellence -- Eduard \v{C}ech Institute (project P201/12/G028 of GA~\v{C}R).}}
\title{Computing homotopy classes for diagrams}
\date{\today}
\maketitle
\begin{abstract}
We present an algorithm that, given finite diagrams of simplicial sets $X$, $A$, $Y$, i.e.\ functors $\icatop \to \sSet$, such that $(X,A)$ is a cellular pair, $\dim X \leq 2 \conn Y$ and $\conn Y \geq 1$, computes the set $[X, Y]^A$ of homotopy classes of maps of diagrams $\ell \colon X \to Y$ extending a given $f \colon A \to Y$. For fixed $n = \dim X$, the running time of the algorithm is polynomial. When the stability condition is dropped, the problem is known to be undecidable. Using Elmendorf's theorem, we deduce an algorithm that, given finite simplicial sets $X$, $A$, $Y$ with an action of a finite group $G$, computes the set $[X,Y]^A_G$ of homotopy classes of equivariant maps $\ell \colon X \to Y$ extending a given equivariant map $f \colon A \to Y$ under the stability assumption $\dim X^H \leq 2 \conn Y^H$ and $\conn Y^H \geq 1$, for all subgroups $H\leq G$. Again, for fixed $n = \dim X$, the algorithm runs in polynomial time. We further apply our results to Tverberg-type problem in computational topology: Given a $k$-dimensional simplicial complex $K$, is there a map $K \to \mathbb{R}^{d}$ without $r$-tuple intersection points? In the metastable range of dimensions, $rd \geq (r+1)k +3$, the problem is shown algorithmically decidable in polynomial time when $k$, $d$ and $r$ are fixed.

\end{abstract}

\section{Introduction}


Determining information about the set of homotopy classes of maps $[X,Y]$ between topological spaces $X,Y$ is one of the classical problems of algebraic topology. Indeed the research in particular instances of the problem such as the computation of higher homotopy groups of spheres $[S^n, S^k]$ led to a development of new algebraic methods and tools such as spectral sequences. Further, many problems such as classification of vector bundles (up to isomorphism) and submanifolds (up to cobordism) can be expressed via computation of a homotopy invariant. 

From an algorithmic perspective, the computation of homotopy invariants unlike the computations of homology and (co)homology groups, is a hard problem. In many cases the problem is even computationally undecidable - for example, it is algorithmically undecidable, whether $\pi_1 (Y)$ is nontrivial, even though $Y$ is a finite simplicial complex.
Despite this, there have been several results in the area. Already in the 1950's Brown~\cite{BrownPost} presented an algorithm that computes the set $[X,Y]^A$ of  homotopy classes of maps $X \to Y$ that extend a specific map $A \to X$ under the conditions that $A,X,Y$, are represented as finite simplicial complexes, $f$ is a simplicial map and $Y$ is $1$-connected and has finite homotopy groups. 

Although the results of \cite{BrownPost} lead for example to an algorithm computing the higher homotopy groups of spheres, Brown himself remarked that the algorithms are impractical for computations.


Further progress was achieved in~\cite{post}, where an algorithm was presented that computes $[X,Y]$ for spaces $X,Y$ given as finite simplicial sets satisfying $\dim X \leq 2 \conn Y$ and $\conn Y \geq 1$ (i.e.\ $Y$ is 1-connected). Here $\dim X$ denotes the dimension of $X$ and $\conn Y$ the connectivity of~$Y$. The algorithm was later detailed in \cite{polypost}, where the computational complexity was further discussed.

In the article \cite{aslep}, the authors extended the results from~\cite{post,polypost} to the case when a finite group $G$ acts freely on the spaces $X,Y$. In particular, they have obtained an algorithm, that given spaces $A,X,Y$ as finite simplicial sets with a \emph{free} action of a finite group $G$ and assuming $\dim X \leq 2 \conn Y$ and $\conn Y \geq 1$, computes the set $[X,Y]^A_G$ of equivariant homotopy classes of maps.

We remark that the stability assumptions in the algorithms are tight -- when $Y$ is not 1-connected, the question whether  $[X, Y]^A$ is nonempty is undecidable by the result of Novikov~\cite{Novikov:AlgorithmicUndecidabilityWordProblem-1955} and for 1-connected $Y$ and $\dim X > 2\conn Y +1$, the nonemptiness of $[X, Y]^A$ is undecidable by~\cite{ext-hard}. 
On the other hand, in the stable range $\dim X \leq 2 \conn Y$, the set $[X, Y]^A _G$ has a structure of an Abelian group and it is this structure that is computed in \cite{polypost,aslep}. In the boundary case when $\dim X = 2 \conn Y+1$, the set $[X, Y]^A _G$ no longer has the structure of an Abelian group, however using the  results of \cite{polypost,aslep}, one can algorithmically decide its nonemptiness.

A number of further related results was presented in recent years, for example, in \cite{FilVokri}, we gave an algorithm that computes the set of pointed homotopy classes of maps $[\Sigma X, Y]^*$ under the assumptions that $X,Y$ are finite simplicial sets and $Y$ is 1-connected, which leads to an algorithm that given two maps $f, g \colon X \to Y$ decides whether $f$ and $g$ are homotopic rel $A$.

The motivation behind the results in \cite{post,polypost,aslep} was the \emph{embeddability problem} from computational topology. In short, can one decide whether a $k$-dimensional simplicial complex $K$ is embeddable in $\bbR^d$? In the so-called \emph{metastable range of dimensions}, this problem is equivalent to deciding the nonemptiness of the set $[K^*, S^{d-1}]_{\bbZ_2}$, where $K^*$ denotes the \emph{deleted product} and both $K^*$ and $S^{d-1}$ are endowed with a free action of $\bbZ_2$. Hence the article~\cite{aslep} was able to provide a solution to the embeddability problem in the metastable range.

Our motivation is likewise based on \emph{Tverberg-type} problem -- a generalization of the \emph{embeddability problem}, where one asks wheteher there is an $r$-almost embedding, i.e.\ a map $f\colon K\to \bbR^{d}$ without $r$-tuple intersection points. The Mabillard Wagner-theorem, formulated in~\cite{MabillardWagner:Elim_II_SoCG-2016} (see~\cite{mabillard_tverberg_proof}, \cite{Skopenkov17-1} and \cite{Skopenkov17-2} for full discussion on its proof), states that under the condition $rd \geq (r+1)k +3$, the Tverberg type problem is equivalent to the nonemptiness of the set $[K^r\setminus\Delta^r, S^{d(r-1)-1}]_{\mathbb S_r}$, where the symmetry group $\mathbb S_r$ acts freely on $K^r\setminus\Delta^r$ and non-freely on $S^{d(r-1)-1}$. Can one determine the nonemptiness algorithmically\footnote{As it turns out, yes, see Theorem~\ref{thm:tverberg} and Section~\ref{s:tverberg}.}? To do so would require a generalization of the results of ~\cite{polypost} and \cite{aslep} to situations with nonfree group actions.

Our approach is based on the application of the \emph{Elmendorf's theorem}~\cite{elmendorf}, which, for a finite group $G$, gives a Quillen equivalence between the category $\GsSet$ of $G$-simplicial sets and the category $\OgsSet$ of functors from the so called orbit category $\Ocat_G$. Hence the computation of the homotopy classes of maps between finite diagrams of spaces leads immediately to results in equivariant homotopy theory.




We formulate our main result as follows:

\begin{theorem}\label{thm:main_formulation}
Let $\icat$ be a finite category and $A,X,Y \colon \icat\to \sSet$ be finite diagrams of finite simplicial sets such that  $(X, A)$ is a cellular pair\footnote{We formally introduce cellularity in Section~\ref{s:math_background}. It is a strenghtening of the notion that $A \to X$ is a cofibration.}, $\conn Y \geq 1$ and
\begin{equation}\label{eq:dimconnectivity}\dim X \leq 2 \cdot \conn Y.\tag{*}\end{equation}
Then there exists an algorithm that computes the set $[X, Y]^A$ of homotopy classes of maps\footnote{More precisely, this is the set of morphisms in the homotopy category of $[\icatop, \sSet]$, i.e.\ homotopy classes of maps from $X$ to a fibrant replacement of $Y$ relative to $A$. Details are explained in Section~\ref{s:math_background}.} $\ell \colon X \to Y$ extending a given map $f \colon A \to Y$. For a fixed $\dim X$, the algorithm runs in polynomial time.
\end{theorem}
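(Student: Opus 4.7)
The plan is to mimic the inductive Postnikov-tower / obstruction-theoretic approach of \cite{polypost, aslep}, carried out inside the projective model structure on $[\icatop, \sSet]$, using the stability assumption \eqref{eq:dimconnectivity} to keep all obstructions and difference classes inside effectively presentable abelian groups. The key conceptual point is that working with diagrams rather than single spaces only changes what ``cells'' and ``coefficients'' mean: cells take the form $\icat(-,i)\times\partial\Delta^\then \hra \icat(-,i)\times\Delta^\then$ for $i\in\icat$, and coefficient systems are functors $\icatop\to\Ab$.

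First I would replace $A\hra X$ by a relative projective cofibrant presentation, attaching such cells in order of increasing dimension; simultaneously I would build a finite-stage Moore--Postnikov tower $Y\to\cdots\to\Pnew\to\Pold\to\cdots\to P_{\conn Y}$ in $[\icatop, \sSet]$, in which each stage is a principal fibration classified by a $\kn$-invariant in the diagram cohomology $H^{\then+1}(\Pold;\,\pi_\then(Y))$, where $\pi_\then(Y)\colon\icatop\to\Ab$ is the levelwise $\then$-th homotopy-group functor of $Y$. The levelwise groups $\pi_\then(Y)(i)$ and the transition maps induced by morphisms of $\icat$ can be computed by the non-equivariant methods of \cite{post, polypost}, so the tower itself is effectively presented.

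Next I would compute $[X,Y]^A$ as the inverse limit of $[X,\Pnew]^A$ along the tower. By cellular approximation in the diagram category this limit stabilizes at $\then=\dim X$, so only finitely many stages need to be handled. The inductive step is the standard obstruction theory: given a class of $\ell_{\then-1}\colon X\to\Pold$ extending the composite $A\xrightarrow{f}Y\to\Pold$, the obstruction to lifting $\ell_{\then-1}$ to $\Pnew$ lies in the relative diagram cohomology
\[ H^{\then+1}\bigl(X, A;\ \ell_{\then-1}^*\,\pi_\then(Y)\bigr), \]
and when it vanishes the set of lifts modulo homotopy rel $A$ is a torsor over
\[ H^{\then}\bigl(X, A;\ \ell_{\then-1}^*\,\pi_\then(Y)\bigr). \]
The range \eqref{eq:dimconnectivity} combined with $\conn Y\geq 1$ forces $\then\leq\dim X\leq 2\then$ at every relevant stage, which is precisely the range in which these cohomology sets are abelian groups and the $\pi_1$-twisting of $\ell_{\then-1}^*\pi_\then(Y)$ is trivial (or uniquely determined by lower stages). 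Hence each inductive step reduces to a diagram cohomology computation with coefficients in a fixed functor $\icatop\to\Ab$, and in turn to a finite system of linear equations over $\Z$.

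The main obstacle I anticipate is making all of this not merely algorithmic but uniformly \emph{polynomial} for fixed $\dim X$. This requires a ``locally polynomial-time'' framework for diagram cohomology with local coefficients, generalizing the one developed in \cite{polypost, aslep}: effective finite presentations of the coefficient functors $\pi_\then(Y)$ as $\icat$-diagrams of finitely generated abelian groups; a polynomial-time procedure that assembles the cellular cochain complex of the pair $(X,A)$ with values in such a presented $M$ and computes its cohomology; and a careful bookkeeping argument bounding the bit-sizes of the representations of $\Pnew$, of the $\kn$-invariants, and of the intermediate sets $[X,\Pnew]^A$ throughout the boundedly many stages of the induction. Once this infrastructure is in place, the induction outputs a finite effective description of $[X,Y]^A$ either as empty or as a torsor over an explicitly presented finitely generated abelian group, which is precisely the conclusion of the theorem.
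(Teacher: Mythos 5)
Your overall strategy (a finite Postnikov tower up to stage $\dim X$, obstruction theory in diagram/Bredon cohomology with cells $\mathcal I(-,i)\times\Delta^n$, reduction to linear algebra over $\mathbb Z$, and a polynomial-time bookkeeping framework) is the same as the paper's, but two of your central steps have genuine gaps. First, the inductive step as you state it is not correct: the fibre of $[X,P_n]^A\to[X,P_{n-1}]^A$ over $[\ell_{n-1}]$ is \emph{not} in general a torsor over $H^{n}(X,A;\pi_n(Y))$. The group acts, but with possibly non-trivial stabilizers coming from self-homotopies of $\ell_{n-1}$ in $P_{n-1}$ lifted to $P_n$; the fibre is the quotient of $H^n$ by the image of a connecting map defined on $[I\times X,P_{n-1}]^{(\partial I\times X)\cup(I\times A)}$, and the algorithm must compute these stabilizers (which requires, inductively, effective group structures on those homotopy groups and an algorithm producing explicit homotopies). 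Related to this, you never explain where an effective (finitely describable) structure on $[X,P_n]^A$ itself comes from: your reading of the stability hypothesis as guaranteeing that ``the cohomology sets are abelian groups and the twisting is trivial'' misses its actual role. Untwisted coefficients already follow from $\conn Y\geq 1$; the condition $\dim X\leq 2\conn Y$ is what makes the whole set $[X,Y]^A$ an abelian heap (in the paper, via the suspension $[X,Y]^A\cong[\Sigma X,\Sigma Y]^{\Sigma A}$, or alternatively an H-space structure on the stages), and without such a structure the sets $[X,P_n]^A$ are just possibly infinite sets with no finite presentation, so the induction you describe cannot even be set up.

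Second, the construction of the tower itself is not as routine as you suggest. You cannot obtain the coefficient diagrams $\pi_n(Y)\colon\mathcal I^{\op}\to\mathsf{Ab}$, nor a strictly commuting diagram-level Postnikov tower with $k$-invariants, by running the non-equivariant algorithm of \cite{post,polypost} levelwise: that construction involves non-canonical choices and is not natural in $i$, so it produces neither the transition homomorphisms nor maps of diagrams. Moreover, in the projective model structure the stages $P_{n-1}$ are not cofibrant, so the $k$-invariant does not exist as a map (or cohomology class) on $P_{n-1}$ but only on a cofibrant replacement $P_{n-1}^{\mathrm{cof}}$; the pullback stage $P_n$ is then no longer fibrant, so homotopy classes in $[X,P_n]^A$ are not represented by actual maps of diagrams. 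Handling this requires additional machinery — in the paper, a Bousfield--Kan cofibrant replacement with effective homology (perturbation lemmas for diagrams), a category of towers in which homotopy classes are representable, and a diagram-level retraction-onto-cycles argument to extract $\pi_n$ and the $k$-invariant — none of which is addressed by your sketch, and which is precisely where the new work relative to \cite{polypost,aslep} lies.
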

By computing $[X,Y]^A$ we mean the computation of the isomorphism type of this Abelian group -- the Abelian group structure is described in Theorem~\ref{thm:stable_situation_heaps} and uses the stability condition~\eqref{eq:dimconnectivity}.
\begin{remark}\label{rem:main}
There are multiple ways to interpret the condition \eqref{eq:dimconnectivity} in the statement of Theorem~\ref{thm:main_formulation} as one can define dimension and connectivity in multiple ways. The most simple one is to use maximum dimension and minimum connectivity i.e.
\[\dim X = \max \{ \dim X(i) \mid i \in \mcI \}, \qquad \conn Y = \min \{ \conn Y(i) \mid i \in \mcI \}.\]
One can also treat both the dimension and the connectivity of diagrams $X,Y$ as collections of numbers indexed by the objects of $\mcI$ and  Theorem~\ref{thm:main_formulation} holds true even if the stability condition is interpreted in this pointwise way. As a further remark, it is even possible to define $\dim X(i)$ as the maximal dimension of a cell at $i$, see~Section~\ref{s:diag_spaces}.
\end{remark}

\begin{remark}\label{rem:subcategory}
The assumption $\conn Y \geq 1$ can be easily removed. The exact algorithm as described in this paper works also for $\conn Y \geq 0$ since it is based on the Postnikov tower of $\Sigma Y$ which is 1-connected in this case. As a matter of fact, it works for $\conn Y \geq -1$, i.e.\ if all the spaces in the diagram are non-empty. With a careful treatment of nonconnected spaces it works even in a complete generality, but there is a simpler solution: let $\mcJ \subseteq \mcI$ denote the full subcategory on objects where $Y$ is nonempty. Then $[X, Y]^A = \emptyset$ unless $X$ is also empty on $\mcI \smallsetminus \mcJ$, in which case $[X, Y]^A = [X|_\mcJ, Y|_\mcJ]^{A|_\mcJ}$ and we are in the simpler situation $\conn Y \geq -1$ as above.
\end{remark}

Let $G$ be a finite group. The orbit category $\Ocat_G$ of $G$ is a finite category with objects $G/H$, for subgroups $H \leq G$, and arrows the equivariant maps $G/H \to G/K$.
Suppose now that $A,X,Y$ are finite simplicial sets with an action of a finite group $G$. By Elemendorf's theorem \cite{elmendorf} the categories $\GsSet$ of $G$-simplicial sets and  $\OgsSet$ are Quillen equivalent, namely $[X,Y]^A _G \cong [\Phi X, \Phi Y]^{\Phi A}$, where 
\[
\Phi (X) (G/H) = X^H = \{x\in X \mid hx = x, \forall h\in H\}.
\]
For details see e.g.\ Chapter V in \cite{may1996equivariant} or  \cite{elmendorf}.

Therefore, as a straightforward consequence of Theorem~\ref{thm:main_formulation} we obtain
\begin{theorem}\label{cor:equiv_main}
Let $A\subseteq X$ and $Y$ be finite simplicial sets with an action of a finite group $G$. Supposing that $\dim \Phi X \leq 2\conn \Phi Y$ and $\conn \Phi Y \geq 1$, there is an algorithm that computes the set $[X,Y]^A _G$ of equivariant homotopy classes of maps $X \to Y$ extending a given equivariant map $A \to Y$. If $\dim X$ is fixed, the algorithm runs in polynomial time.
\end{theorem}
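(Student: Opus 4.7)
The plan is to reduce Theorem~\ref{cor:equiv_main} to Theorem~\ref{thm:main_formulation} via Elmendorf's theorem, treating the functor $\Phi$ as an algorithmic bridge between the equivariant world $\GsSet$ and the diagram world $\OgsSet$. The Quillen equivalence referenced above yields a bijection $[X,Y]^A_G \cong [\Phi X, \Phi Y]^{\Phi A}$, so it suffices to compute the right-hand side by the main theorem, provided we can (i)~construct the input diagrams $\Phi A, \Phi X, \Phi Y$ algorithmically, (ii)~verify the stability condition for them, and (iii)~account for the running time.

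First, I would describe the algorithm that computes $\Phi$. The orbit category $\mathcal{O}_G$ is a finite category whose objects are the (finitely many) cosets $G/H$ for $H \leq G$ and whose morphisms are the equivariant maps $G/H \to G/K$, which are classically parametrized by the set of $K$-fixed $H$-cosets $(G/K)^H$. All of this data can be enumerated directly from the group table of $G$. For each subgroup $H \leq G$ and each finite $G$-simplicial set $Z \in \{A, X, Y\}$, one computes the fixed-point subcomplex $Z^H = \{z \in Z \mid hz = z \text{ for all } h \in H\}$ by going through the (finitely many) nondegenerate simplices of $Z$ and testing $H$-invariance. The structure maps $Z^H \to Z^K$ induced by equivariant maps $G/K \to G/H$ are given by inclusions of fixed-point sets, again directly computable. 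This produces genuine finite diagrams $\Phi A \subseteq \Phi X$, $\Phi Y$ over $\mathcal{O}_G^{\op}$, together with the map $\Phi f \colon \Phi A \to \Phi Y$.

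Second, the stability condition $\dim \Phi X \leq 2 \conn \Phi Y$ and $\conn \Phi Y \geq 1$ is exactly hypothesis~\eqref{eq:dimconnectivity} for the diagrams $\Phi X, \Phi Y$ on $\mathcal{I} = \mathcal{O}_G^{\op}$, interpreted in the pointwise sense of Remark~\ref{rem:main}. Hence Theorem~\ref{thm:main_formulation} applies directly and computes the set $[\Phi X, \Phi Y]^{\Phi A}$ together with its abelian group structure described in Theorem~\ref{thm:stable_situation_heaps}. Transporting this structure along the Elmendorf bijection yields the isomorphism type of $[X,Y]^A_G$.

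Finally, for the polynomial complexity claim, one observes that for fixed $G$ the number of subgroups of $G$ and hence the size of $\mathcal{O}_G$ is bounded by an absolute constant, so the total size of the diagrams $\Phi A, \Phi X, \Phi Y$ is linear in the sizes of $A, X, Y$, and each fixed-point computation is polynomial. Feeding these diagrams into the polynomial-time algorithm of Theorem~\ref{thm:main_formulation} (for fixed $\dim X$, hence fixed $\dim \Phi X$) then yields the desired polynomial runtime. The only mildly delicate point that I expect to require care is the verification that the dimension/connectivity of the fixed-point diagrams are correctly read off in the pointwise sense used by Theorem~\ref{thm:main_formulation}, and that the action of $G$ on the original simplicial sets is given in a format (for instance, cellular orbit data) from which the fixed sets $X^H$ can be extracted in polynomial time; both are routine but deserve an explicit remark.
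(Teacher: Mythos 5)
Your proposal takes essentially the same route as the paper: Theorem~\ref{cor:equiv_main} is obtained there as an immediate corollary of Theorem~\ref{thm:main_formulation} via Elmendorf's theorem and the fixed-point functor $\Phi$, whose cells correspond to those of the $G$-simplicial set, exactly as you describe, with the running-time claim following since $\mcO_G$ is fixed. One minor imprecision worth noting: a morphism $G/H \to G/K$ in $\mcO_G$ induces a map $Z^K \to Z^H$ given by multiplication by a subconjugating group element, not merely an inclusion of fixed-point sets, but this does not affect computability or the argument.
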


\subsection{Applications}
We present two applications of Theorem~\ref{cor:equiv_main} for computations in equivariant homotopy theory. First describes a solution to the aforementioned Tverberg-type problem from computational topology. The second application is of more theoretical nature and gives an algorithm that computes equivariant stable homotopy groups of spheres. We introduce both here, while proofs of these results are postponed until Section~\ref{sec:applications}.

\subsection*{Tverberg problem}
Let $K$ be a $\thedimm$-dimensional simplicial complex and let $f\colon K \to  \mathbb{R}^d$ be a map. A point $x \in f(K)$ is called an $r$-Tverberg point if it has preimages lying in $r$ pairwise disjoint simplices of $K$.

The \emph{Tverberg-type problem} is a question whether there exists an, \emph{almost $r$-embedding}, i.e.\ a  map $f\colon K \to \mathbb{R}^d$ such that it contains no $r$-Tverberg points. We will show that Theorem~\ref{thm:main_formulation} implies the following:
\begin{theorem}\label{thm:tverberg}
Let $K$ be a $\thedimm$-dimensional simplicial complex, $d,r \in \mathbb{N}$, such that $rd \geq (r+1)k +3$. Then there is a polynomial time algorithm that decides whether there is an almost $r$-embedding $f\colon K \to \bbR^d$.
\end{theorem}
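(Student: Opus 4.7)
The plan is to reduce the Tverberg-type problem to an instance of the equivariant extension problem handled by Theorem~\ref{cor:equiv_main}. By the theorem of Mabillard and Wagner~\cite{mabillard_tverberg}, in the metastable range $rd \geq (r+1)k+3$, a $\thedimm$-dimensional simplicial complex $K$ admits an almost $r$-embedding into $\bbR^d$ if and only if the set $[K^r_\Delta, S^{d(r-1)-1}]_{\mathbb{S}_r}$ is non-empty, where $K^r_\Delta$ is a suitable combinatorial deleted product of $K$ equipped with the $\mathbb{S}_r$-action permuting factors, and $S^{d(r-1)-1}$ is the unit sphere of $\tilde W \otimes \bbR^d$, with $\tilde W$ the reduced standard representation of $\mathbb{S}_r$. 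Thus it suffices to decide whether this equivariant homotopy set is non-empty, which I would do by invoking Theorem~\ref{cor:equiv_main} with $A = \emptyset$.

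First I would construct $K^r_\Delta$ and $S^{d(r-1)-1}$ as finite simplicial sets equipped with an $\mathbb{S}_r$-action. The deleted product is obtained from the $r$-fold product of $K$ by removing those simplices whose factors are not pairwise vertex-disjoint; this has at most $|K|^r$ simplices and is computable in polynomial time once $r$ is fixed. An $\mathbb{S}_r$-equivariant triangulation of $S^{d(r-1)-1}$ of size depending only on $d$ and $r$ is a standard construction, e.g.\ as the boundary of an appropriate join.

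The next step is to verify the stability hypotheses of Theorem~\ref{cor:equiv_main} pointwise over the orbit category, as permitted by Remark~\ref{rem:main}. Because $\mathbb{S}_r$ acts freely on $K^r_\Delta$ (no nontrivial permutation can fix an $r$-tuple of pairwise disjoint simplices), the fixed-point subspace $(K^r_\Delta)^H$ is empty for every nontrivial $H\leq\mathbb{S}_r$, making the pointwise inequality $\dim X^H \leq 2\conn Y^H$ vacuous there. At the free orbit $\mathbb{S}_r/\{e\}$ we have $\dim K^r_\Delta \leq rk$ and $\conn S^{d(r-1)-1} = d(r-1) - 2$, and the inequality $rk \leq 2\bigl(d(r-1) - 2\bigr)$ is a direct consequence of $rd \geq (r+1)k + 3$: rearranging gives $(r^2-2)k + 2r - 6 \geq 0$, which holds for all $r \geq 2$ and $k \geq 1$. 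The connectivity requirement $\conn S^{d(r-1)-1} \geq 1$, i.e.\ $d(r-1) \geq 3$, likewise follows from the metastable assumption in all nontrivial cases (degenerate small cases, such as $k = 0$ or $r = 1$, are checked by hand).

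The main obstacle will be the careful equivariant-combinatorial bookkeeping in setting up $K^r_\Delta$ and the target sphere so that Theorem~\ref{cor:equiv_main} applies verbatim; once this is done, the freeness of the source collapses the pointwise stability conditions to the single inequality above, which is exactly the metastable hypothesis. Since $\dim K^r_\Delta \leq rk$ is fixed once $k$ and $r$ are fixed, the resulting algorithm decides nonemptiness of $[K^r_\Delta, S^{d(r-1)-1}]_{\mathbb{S}_r}$, and hence the existence of an almost $r$-embedding, in time polynomial in $|K|$.
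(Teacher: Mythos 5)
Your overall route is the same as the paper's: reduce via Mabillard--Wagner to deciding nonemptiness of $[K^{r}\setminus\Delta_r, S^{d(r-1)-1}]_{\mathbb{S}_r}$ and feed this into the equivariant main theorem; your verification of $rk \leq 2\bigl(d(r-1)-2\bigr)$ from the metastable inequality is also the computation the paper does. However, there is a genuine gap in how you verify the hypotheses of Theorem~\ref{cor:equiv_main}. That theorem (read through Elmendorf/Remark~\ref{rem:main}) requires conditions on the fixed-point spaces of the \emph{target} for all subgroups $H\leq\mathbb{S}_r$, in particular $\conn Y^H \geq 1$, and the $\mathbb{S}_r$-action on $S^{d(r-1)-1}$ is not free: $(S^{d(r-1)-1})^H \cong S^{dq-1}$ where $q+1$ is the number of $H$-orbits on $\{1,\dots,r\}$. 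For transitive $H$ (e.g.\ $H=\mathbb{S}_r$) this fixed set is \emph{empty}, so the hypothesis of Theorem~\ref{cor:equiv_main} fails outright and the theorem cannot be invoked verbatim; the freeness of the action on $K^r\setminus\Delta_r$ makes the dimension inequality vacuous at nontrivial $H$, but it does not repair the requirement on $\conn Y^H$, which concerns $Y$ alone. This is exactly the point where the paper invokes Remark~\ref{rem:subcategory}: one passes to the full subcategory $\mcJ\subseteq\OG$ of orbits $G/H$ with $(S^{d(r-1)-1})^H\neq\emptyset$, using that the source's fixed points are empty at the discarded objects (freeness) so the homotopy set is unchanged, and only then applies the main theorem over $\mcJ$.

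Even after this restriction, the connectivity check you give is insufficient: you only verify $\conn S^{d(r-1)-1}\geq 1$ at the trivial subgroup, whereas one must check $\conn (S^{d(r-1)-1})^H = dq-2 \geq 1$ for every nonempty fixed set, i.e.\ for all $q\geq 1$; the binding case is $q=1$, which needs $d\geq 3$. This does not follow from $rd\geq(r+1)k+3$ alone (e.g.\ $r=5$, $k=1$, $d=2$ satisfies it), but it does follow from the additional Mabillard--Wagner hypotheses $r\geq 2$, $d-k\geq 3$, which are needed anyway to apply their criterion \cite{MabillardWagner:Elim_II_SoCG-2016} and which the paper's proof uses at precisely this step. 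So the missing ingredients are (i) the reduction of Remark~\ref{rem:subcategory} to dispose of the empty fixed sets of the target, and (ii) the connectivity estimate $dq-2\geq 1$ for all nonempty fixed spheres, using $d\geq k+3$; with these added, your argument matches the paper's proof.
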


\subsection*{Stable homotopy groups}
As a second application, we obtain 
\begin{thm}\label{thm:equivstable1}
Let $X,Y$ be finite simplicial sets with an action of a finite group $G$. Then there is an algorithm that computes the set $\{X,Y\}_G$ of equivariant stable homotopy classes of maps $X \to Y$. 
\end{thm}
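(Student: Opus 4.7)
The (naive) equivariant stable homotopy set is by definition the colimit
\[
\{X,Y\}_G = \colim_n\, [\Sigma^n X, \Sigma^n Y]^*_G,
\]
where $\Sigma$ denotes reduced suspension with trivial $G$-action on the suspension coordinate and the structure maps of the colimit are induced by $\Sigma$. (One may freely add a $G$-fixed basepoint to $X$ and $Y$, and restrict to the full subcategory of $\mcO_G$ on which $Y^H$ is non-empty, as in Remark~\ref{rem:subcategory}.) My strategy is to pick a single $n$ large enough that the stability hypothesis of Theorem~\ref{cor:equiv_main} is fulfilled by the pair $(\Sigma^n X, \Sigma^n Y)$, compute the resulting finite abelian group, and then argue by Freudenthal that the colimit has already stabilized.

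Passing through Elmendorf's fixed-point functor $\Phi$, the fact that $G$ acts trivially on the suspension coordinate gives $(\Sigma^n X)^H = \Sigma^n(X^H)$, so that
\[
\dim (\Sigma^n X)^H = n + \dim X^H, \qquad \conn (\Sigma^n Y)^H \geq n + \conn Y^H
\]
for every subgroup $H \leq G$. Setting
\[
n_0 = \max\Bigl\{\, 1,\ \max_{H \leq G}\bigl(\dim X^H - 2\conn Y^H\bigr) \,\Bigr\},
\]
a finite integer because $G$ has only finitely many subgroups, both hypotheses of Theorem~\ref{cor:equiv_main} (stability in the form of Remark~\ref{rem:main}, and $\conn \geq 1$) are satisfied by $(\Sigma^{n_0} X, \Sigma^{n_0} Y)$ pointwise over $\mcO_G$. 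Taking $A = *$ the fixed basepoint and $f$ the constant map, Theorem~\ref{cor:equiv_main} outputs the abelian group $[\Sigma^{n_0} X, \Sigma^{n_0} Y]^*_G$ in polynomial time.

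It remains to check that for $n \geq n_0$ the suspension map $[\Sigma^n X, \Sigma^n Y]^*_G \to [\Sigma^{n+1} X, \Sigma^{n+1} Y]^*_G$ is an isomorphism, so that the colimit defining $\{X,Y\}_G$ has stabilized at stage $n_0$. Via the pointwise nature of $\Phi$, this is a statement internal to each object of the orbit category, where it reduces to the classical Freudenthal suspension theorem asserting that, in the range $\dim A \leq 2\conn B$, suspension induces a bijection of pointed homotopy classes $[A,B]^* \to [\Sigma A, \Sigma B]^*$. Consequently $\{X,Y\}_G \cong [\Sigma^{n_0} X, \Sigma^{n_0} Y]^*_G$ and the algorithm of the previous paragraph computes exactly the desired stable set.

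The main obstacle will be reconciling the abelian group structure that Theorem~\ref{cor:equiv_main} produces (inherited from the heap structure in the stable range, cf.\ Theorem~\ref{thm:stable_situation_heaps}) with the standard group structure on $[\Sigma^{n_0} X, \Sigma^{n_0} Y]^*_G$ coming from the pinch map on the suspension coordinate, since the Freudenthal identification is an isomorphism only after this compatibility is verified; once that is done, the argument goes through in complete analogy to the non-equivariant case. A genuine (representation-sphere) variant can be obtained by the same method, suspending instead by the regular representation $\rho_G$ until the condition $\dim X^H + n \leq 2(\conn Y^H + n)$ is met for every $H$.
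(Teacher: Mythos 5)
There is a genuine gap: you are computing the wrong colimit. In this paper $\{X,Y\}_G$ is the \emph{genuine} equivariant stable homotopy set, defined (following May, Ch.~IX) as $\colim_V [\Sigma^V X,\Sigma^V Y]_G$ where $V$ runs over the indexing spaces of a \emph{complete} $G$-universe, so the colimit involves suspensions by representations with nontrivial $G$-action. Your main argument only stabilizes the naive colimit $\colim_n [\Sigma^n X,\Sigma^n Y]^*_G$ over trivial suspension coordinates, and this is a genuinely different object: already for $G=\bbZ_2$ and $X=Y=S^0$ the naive colimit is $\bbZ$ (trivial actions make every map and homotopy equivariant), whereas $\{S^0,S^0\}_{\bbZ_2}$ is the Burnside ring $\bbZ^2$. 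So the group your algorithm outputs need not be $\{X,Y\}_G$.

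The reason the trivial-coordinate argument cannot be repaired by condition (i) alone is the second hypothesis of the equivariant Freudenthal theorem, condition~\ref{freud:2}: one needs $\dim(X^H)\leq\conn(Y^K)$ for all pairs $K\leq H$ with $V^K\neq V^H$. For trivial $V$ this condition is vacuous (which is why your naive colimit stabilizes), but it is exactly the condition that must be arranged before suspension by \emph{arbitrary} representations in the universe becomes an isomorphism, i.e.\ before the genuine colimit stops moving. Your closing remark about suspending by the regular representation gestures at the right fix, but the criterion you state, $\dim X^H + n \leq 2(\conn Y^H + n)$, is only condition~\ref{freud:1}; it says nothing about the cross-subgroup inequality. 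The paper's proof (via Corollary~\ref{cor:equivstable}) hinges precisely on the fact that the regular representation satisfies $\dim(U^K)>\dim(U^H)>0$ for $K<H$, so that after suspending by $k$ copies of it the fixed-point connectivities of $Y$ over smaller subgroups outgrow the fixed-point dimensions of $X$ over larger ones, making both \ref{freud:1} and \ref{freud:2} hold; only then does $\{X,Y\}_G=[\Sigma^{V_0}X,\Sigma^{V_0}Y]_G$ and Theorem~\ref{cor:equiv_main} (via Elmendorf) finish the computation. To complete your proof you must (a) use the paper's definition of $\{X,Y\}_G$ over a complete universe, and (b) verify both Freudenthal conditions for $V_0=k\rho_G$, noting that $(\Sigma^{\rho_G}X)^H$ gains dimension $[G:H]$, which varies with $H$ — not a single $n$ as your formula suggests.
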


\section{The idea of the proof of the main theorem} \label{s:idea_proof}

The proof consists of three main steps.
\begin{itemize}
\item
Using stability, replace $[X,Y]^A$ by the isomorphic $[\Sigma X, \Sigma Y]^{\Sigma A}$ where $\Sigma$ denotes the unreduced suspension.
\item
Construct the Postnikov tower for $\Sigma Y$, with stages $\Pnewst$ that capture the homotopical information up to dimension $\then$, such that the difference between $\Pnewst$ and $\Poldst$ is isolated in dimension $\then$ and is rather easy to handle explicitly, using a certain ``exact sequence''.
\item
If non-empty, $[\Sigma X, \Pnewst]^{\Sigma A}$ becomes an Abelian group via homotopy concatenation (to be more precise, it is an Abelian heap). Inductively with respect to $n$,  propagate the structure of an ``effective'' Abelian group from $[\Sigma X, \Poldst]^{\Sigma A}$ to $[\Sigma X, \Pnewst]^{\Sigma A}$, using the exact sequence from the previous point.
\end{itemize}
An important part of the mentioned effective Abelian group structure is an algorithm that outputs its isomorphism type. For $n = \dim \Sigma X$, we have $[X,Y]^A \cong [\Sigma X, \Sigma Y]^{\Sigma A} \cong [\Sigma X,\Pnewst]^{\Sigma A}$ and the computation is finished.

\subsection{New ingredients}

Although the structure of the computation is mostly identical to that of our previous work \cite{polypost,aslep}, there are two important differences, the reader should be aware of:

Firstly, the construction of the Postnikov tower requires the Postnikov stages to be cofibrant (more careful explanation is given later in Sections~\ref{s:Postnikov_tower_motivation} and~\ref{s:Postnikov_tower_diagrams}) and this is generally not the case for diagrams. Our solution consists of employing a cofibrant replacement that, however, makes the basic shape of the tower more complicated (we need both the stages and their cofibrant replacements) and renders the Postnikov stages non-fibrant. For this reason, homotopy classes in $[\Sigma X, \Pnewst]^{\Sigma A}$ are not represented by actual maps $\Sigma X \to \Pnewst$. Instead, as a way around these technical problems, we develop a convenient category of \emph{towers}, in which the homotopy classes admit representatives, and use this framework throughout the paper.

Secondly, the exact sequence relating homotopy classes of maps into consecutive Postnikov stages consists more naturally of \emph{unpointed} sets and we follow this more conceptual approach in the paper (it already appeared in \cite{FilVokri}). The main point is that the action of $\Lnst$ on $\Pnewst$, which is free with $\Poldst$ as the space of orbits, induces an action of $[\Sigma X, \Lnst]^{\Sigma A}$ on $[\Sigma X, \Pnewst]^{\Sigma A}$, but with possibly non-trivial stabilizers and with the set of orbits possibly a proper subset of $[\Sigma X, \Poldst]^{\Sigma A}$. The exact sequence captures both the stabilizers and the subset. For details, see Section~\ref{sec:exact_sequence}.

\subsection{Plan of the paper}

We start by setting up the mathematical notions required in the paper, Section~\ref{s:math_background}. Then, in Section~\ref{s:Algorithmic_structures} we explain various ways of endowing these mathematical objects with a computational layer. After that we give a more detailed idea of the proof with more precise statements (Section~\ref{s:main_theorem}), while some more technical aspects of the proof  - the obstruction theory for diagrams, the exact sequence, further effective homological algebra and construction of Postnikov towers are presented in Sections~6--9. Finally, in Section~10, we discuss applications of Theorem~\ref{thm:main_formulation}.


\section{Mathematical background}\label{s:math_background}
\subsection{Model category formalism}

One of the successful formalisms in homotopy theory is that of model categories. We will be dealing with the model category of spaces, $G$-spaces, chain complexes and diagrams in these categories. A precise definition of a model structure will not be needed\footnote{We refer the reader to standard resources \cite{hovey2007model} for full details.} as only a fragment of the model structure in the above examples will be required -- the class of \emph{cofibrant objects}, the class of \emph{fibrant objects},  and a \emph{homotopy relation}. These are crucial concepts since, for $X$ cofibrant and $Y$ fibrant in $\mcC$, the hom-set $[X,Y]$ in the homotopy category $\Ho(\mcC)$ is defined to be the set of homotopy classes of maps from $X$ to $Y$. For general $X$ and $Y$, one needs to choose a cofibrant replacement $X^\cofr$ and a fibrant replacement $Y^\fibr$ and then defines the hom-set in $\Ho(\mcC)$ to be
\[[X,Y] = \map(X^\cofr, Y^\fibr)/{\sim}\]
 We remark that the notion of ``replacement'' also requires the specification of weak equivalences.

The cofibrant objects are usually described via a generating set of cofibrations $K_j \to L_j$, thought of as boundary inclusions of cells (of various shapes). We then say that the pushout $X$ in
\[\xymatrix{
K_j \ar[r] \ar[d] & A \ar[d] \\
L_j \ar[r] & \leftbox{X}{{}=A \cup_{K_j} L_j} \po
}\]
is obtained from $A$ by attaching a cell along the \emph{attaching map} $K_j \to A$, where by an actual \emph{cell} we understand the canonical map $L_j \to X$. Any object $X$ obtained from the initial object $\emptyset$ by successively attaching cells is said to be \emph{cellular} or a \emph{cell complex}; any such $X$ is cofibrant. We remark that the cells are attached in some order and, thus, a cell complex is generally not specified by the collection of cells. For our model categories, it will always be possible to attach cells in the order of increasing dimension and this technical issue disappears.

\subsection{Relative categories}

When $\mcC$ is a model category and $A \in \mcC$ an object, the \emph{slice category} $A/\mcC$, or the category of \emph{objects under} $A$, has as objects maps in $\mcC$ with domain $A$; its maps from $f \colon A \to X$ to $g \colon A \to Y$ are commutative triangles
\[\xymatrix@R=0pt{
& X \ar[dd] \\
A \ar[ru]^-f \ar[rd]_-g \\
& Y
}\]
We will now explain the important model category concepts for $A/\mcC$ in terms of $\mcC$.

The cofibrant objects are cofibrations in the model structure of $\mcC$, while fibrant objects are maps with fibrant codomain.

If $X$ is obtained (in $\mcC$) from $A$ by successively attaching cells, the canonical map $A \to X$ is said to be a \emph{relative cell complex} and these constitute exactly the cell complexes in $A/\mcC$. In our examples, $A$ will always be a subobject of $X$ and we will denote the relative cell complex as a pair $(X, A)$.

The hom-set in $\Ho(A/\mcC)$ will be denoted by $[X, Y]^A$, where we suppress from the notation the involved maps $A \to X$ and $A \to Y$; these will always be fixed and clear from the context. For $(X, A)$ cofibrant and $(Y, A)$ fibrant, this is the set of homotopy classes relative to $A$.

\subsection{Spaces = simplicial sets}

For computational purposes, a space will mean a simplicial set. We denote by $\sSet$ the category of simplicial sets and simplicial maps between them.

We equip simplicial sets with the Kan model structure: Generating cofibrations are the boundary inclusions $\partial\Delta^n \to \Delta^n$, for $\then \geq 0$, where $\Delta^n$ denotes the standard $n$-simplex and $\partial\Delta^n$ the union of all its proper faces. In this way, any simplicial set is cofibrant, in fact cellular. Thus, cells are maps $\Delta^n \to Y$ or equivalently $n$-simplices of $Y$, for arbitrary $\then \geq 0$. The canonical cellular structure on $Y$, unique up to the order of cells, has as cells precisely all non-degenerate simplices.

Fibrant objects, the so called Kan complexes, are simplicial sets that have the right lifting property with respect to the horn inclusions $\horn nk \to \Delta^n$, where $\horn nk$ is the union of all proper faces of $\Delta^n$ with the exception of the $k$-th face. Most importantly for us, all simplicial groups are fibrant.

We will also use the notation $I = \Delta^1$ for the interval, especially when talking about homotopies.

We will denote by $s_J = s_{j_r} \cdots s_{j_1}$ a degeneracy operator for a set $J = \{j_r > \cdots > j_1\}$. For each simplex $x$ there is a unique non-degenerate simplex $\overline x$ and a unique degeneracy $s_J$ such that $x = s_J \overline x$. The set $J$ consists of all the $j$ for which $x$ lies in the image of $s_j$. By definition, $x$ is non-degenerate if and only if $J = \emptyset$.

\subsection{Diagrams}

Let $\mcI$ be a small category. For a category $\mcC$, we will denote by $\CI$ the category of diagrams $\mcI^\op \to \mcC$. We thus have the category $\sSetI$ of diagrams of spaces, $\AbI$ of diagrams of Abelian groups, $\ChI$ of diagrams of chain complexes, etc.\ (as the notation suggests, we think of them as right $\mcI$-modules with values in $\mcC$).

\subsection{Diagrams of spaces}\label{s:diag_spaces}

In particular, we have the category $\Diag$ of $\mcI$-shaped diagrams of spaces. The representable functor $\mcI(-, i) \colon \mcI^\op \to \sSet$ can be interpreted as a functor with values in (discrete) simplicial sets and we will thus write $\mcI(-, i) \in \Diag$.

The model structure on $\Diag$, the so called \emph{projective model structure} which we are about to describe, is more complicated than that on $\sSet$ in that not every object is cofibrant; on the other hand, fibrant objects are simply diagrams consisting of fibrant objects. The generating cofibrations are the maps
\[\partial\Delta^n \times \mcI(-, i) \to \Delta^n \times \mcI(-, i)\]
In this way, a cell is a map $\Delta^n \times \mcI(-, i) \to Y$ or, equivalently, an $n$-simplex of $Y(i)$, for $n \geq 0$ and $i \in \mcI$ arbitrary. This results in the following characterization:

\begin{proposition} \label{prop:cellular_diagrams_spaces}
A diagram $X$ is cellular if and only if there is a collection of simplices $e_\alpha \in (X(i_\alpha))_{n_\alpha}$, for $\alpha \in \mcA$, called cells, such that any simplex $e \in (X(i))_n$ is obtained uniquely from a cell by applying a map in the diagram and a degeneracy, i.e.\ $e = s_J(f^*(e_\alpha))$ for unique $\alpha \in \mcA$, $f \colon i \to i_\alpha$ and degeneracy $s_J$.

More generally, a cellular pair $(X, A)$ is one for which the above condition is satisfied for simplices $e \in X \smallsetminus A$.
\end{proposition}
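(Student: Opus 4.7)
The plan is to unpack the pushout structure of the generating cofibration $\partial\Delta^n\times\mcI(-,i)\hookrightarrow\Delta^n\times\mcI(-,i)$. Its $m$-simplices at level $j$ are pairs $(\sigma,f)$ with $\sigma\in(\Delta^n)_m$ and $f\in\mcI(j,i)$; such a pair lies outside the boundary iff the unique non-degenerate representative of $\sigma$ is $\iota_n$ itself, equivalently $\sigma=s_J\iota_n$ for some $J$. By the Yoneda lemma, maps $\Delta^n\times\mcI(-,i)\to X$ in $\Diag$ correspond to $n$-simplices $e\in(X(i))_n$, and the pair $(s_J\iota_n,f)$ at level $j$ is sent to $s_Jf^*(e)$.

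For the forward direction, write $X=\colim_\alpha X_\alpha$ as a transfinite composition of pushouts along the generating cofibrations and set $e_\alpha\in(X(i_\alpha))_{n_\alpha}$ to be the image of $(\iota_{n_\alpha},\id_{i_\alpha})$ under the characteristic map of the $\alpha$-th cell. Because colimits in $\Diag$ are computed levelwise in $\sSet$ and dimensionwise in $\Set$, each pushout $X_\alpha=X_{\alpha-1}\cup_{\partial\Delta^{n_\alpha}\times\mcI(-,i_\alpha)}\bigl(\Delta^{n_\alpha}\times\mcI(-,i_\alpha)\bigr)$ adds precisely the simplices outside the boundary, namely $(s_J\iota_{n_\alpha},f)$, mapped to $s_Jf^*(e_\alpha)$, and these are disjoint from the simplices of $X_{\alpha-1}$. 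A transfinite induction yields the required unique description of simplices of $X$.

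For the reverse direction, well-order the family of cells so that the dimensions $n_\alpha$ are non-decreasing and let $X^{(n)}\subseteq X$ consist of simplices $s_Jf^*(e_\alpha)$ with $n_\alpha\leq n$. Closure of $X^{(n)}$ under degeneracies and under diagram maps is immediate from the relations $h^*(s_Jf^*e_\alpha)=s_J(fh)^*e_\alpha$ and the fact that a composition of degeneracies is again a degeneracy. The delicate point is closure under faces: the simplicial identities reduce $d_k(s_Jf^*e_\alpha)$ either to a simplex still of the form $s_{J'}f^*e_\alpha$ (when $d_k$ cancels a degeneracy in $J$) or to $s_{J'}f^*(d_{k'}e_\alpha)$; in the latter case the uniqueness hypothesis writes $d_{k'}e_\alpha=s_{J''}g^*(e_\beta)$ uniquely, and the dimension count $n_\alpha-1=n_\beta+|J''|$ forces $n_\beta<n_\alpha$, so the result remains in $X^{(n)}$. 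Having this, the map $\Delta^{n_\alpha}\times\mcI(-,i_\alpha)\to X$ corresponding to each cell $e_\alpha$ with $n_\alpha=n$ restricts on the boundary into $X^{(n-1)}$, and the pushout along these attaching maps adds exactly the new simplices $s_Jf^*(e_\alpha)$, producing $X^{(n)}$. The resulting transfinite composition exhausts $X$, establishing cellularity. The relative version for $(X,A)$ is obtained by starting the induction from $X^{(-1)}=A$.

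The main obstacle is the face-closure argument in the reverse direction, which is the sole place where the uniqueness clause of the hypothesis is genuinely used, coupled with the dimension count $n_\beta+|J''|=n_\alpha-1$. Once this is established, the remainder is routine bookkeeping with presheaf pushouts and transfinite compositions.
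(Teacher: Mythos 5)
The paper states Proposition~\ref{prop:cellular_diagrams_spaces} without proof, treating it as a routine unpacking of what a cell complex is in the projective model structure, and your argument is exactly that unpacking: the Yoneda identification of maps $\Delta^n\times\mcI(-,i)\to X$ with $n$-simplices of $X(i)$, the observation that the simplices of $\Delta^n\times\mcI(-,i)$ outside the boundary are precisely the pairs $(s_J\iota_n,f)$, the levelwise computation of pushouts and transfinite compositions for the forward direction, and the skeletal filtration $X^{(n)}$ with the dimension count $n_\beta+|J''|=n_\alpha-1$ for the converse. Both directions are correct in structure, including the relative version started at $X^{(-1)}=A$.

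One correction, though, because it concerns the conceptual heart of the statement: your closing claim that the face-closure step is ``the sole place where the uniqueness clause of the hypothesis is genuinely used'' is misplaced, and it hides the one piece of your ``routine bookkeeping'' that actually depends on uniqueness. In the face-closure (and boundary-restriction) arguments you only use the \emph{existence} of a decomposition of $d_{k'}e_\alpha$, together with the dimension count. Uniqueness is what makes the comparison map from the pushout of $X^{(n-1)}$ along $\coprod_{n_\alpha=n}\bigl(\partial\Delta^{n}\times\mcI(-,i_\alpha)\to\Delta^{n}\times\mcI(-,i_\alpha)\bigr)$ to $X^{(n)}$ \emph{injective} on the new simplices: it guarantees that distinct triples $(\alpha,f,J)$ with $n_\alpha=n$ yield distinct simplices and that none of them already lies in $X^{(n-1)}$. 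Without it the construction only produces a cell complex mapping \emph{onto} $X$, not $X$ itself. A concrete failure: let $\mcI$ be the one-object category given by the group $\bbZ/2$ and let $X=\Delta^0$ with the trivial action; the single $0$-simplex $e$ satisfies the existence half of the hypothesis (every simplex is $s_J(f^*(e))$) but not uniqueness, since $1^*(e)=t^*(e)$, and $X$ is the standard example of a non-cofibrant, hence non-cellular, diagram (every cell attachment $\Delta^n\times\mcI(-,i)$ adds simplices on which the action is free). So when you assert that the pushout ``adds exactly the new simplices $s_Jf^*(e_\alpha)$, producing $X^{(n)}$'', make the appeal to uniqueness explicit at that point; with that adjustment the proof is complete.
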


\subsection{Equivariant spaces}

Let $G$ be a fixed finite group. If we interpret $G$ as a one-object category, spaces with a $G$-action ($G$-spaces) are functors $G \to \sSet$ and their category will be denoted $\GsSet$. This category is equipped with a model structure that is described below and is different from the projective model structure on diagrams of spaces. However, a theorem of Elmendorff says that this category if Quillen equivalent to $\sSetOG$ for the so called category of orbits $\OG$ (consisting of all orbits $G/H$ and all equivariant maps between them). This is how questions of homotopical nature regarding $\GsSet$ are answered: by translating to $\sSetOG$ and solving there.

For the purpose of the translation, it will be useful to describe the generating set of cofibrations for $\GsSet$. They are given by inclusions $\partial \stdsimp \then \times G/H \to \stdsimp \then \times G/H$, for all $\then \geq 0$ and for all subgroups $H$ of $G$. Thus, a cell of $X$ is a map $\stdsimp \then \times G/H \to X$, i.e.\ an $\then$-simplex of the fixed point space $X^H$. Similarly to simplicial sets, every object is cofibrant. The functor $\Phi \colon \GsSet \to \sSetOG$ takes a $G$-space to the collection of its $H$-fixed point subspaces, for all subgroups $H$ of $G$ and all action maps between them. It is not too difficult to see\footnote{%
	However, we remark that this requires commutation of certain limits and colimits and as such does not hold in arbitrary categories, but only in categories exhibiting this kind of ``exactness''.
} that a cell $\stdsimp\then \times G/H \ra X$ gives a cell $\stdsimp \then \times \OG(-, G/H) \to \Phi(X)$ and, in this way, the diagram $\Phi(X)$ is cellular for any cellular $G$-space $X$ (with cells of $\Phi(X)$ corresponding to those of $X$).

\subsection{Chain complexes}

We will be working exclusively with non-negatively graded chain complexes of Abelian groups in their projective model structure, denoted $\Ch$. The free chain complex $D^\then$ generated by $x$ in dimension $\then$ has, for $\then >0$, the Abelian group $\bbZ$ in dimensions $\then$ and $\then - 1$, generated by $x$ and $\partial x$ respectively. Its boundary $\partial D^\then$ is the subcomplex generated by $\partial x$, i.e.\ has $\bbZ$ in dimension $\then - 1$. The case $\then = 0$ is special in that $\partial x = 0$, and thus $D^\then = \bbZ$, $\partial D^\then = 0$. The boundary inclusions $\partial D^\then \to D^\then$ are the generating cofibrations for the projective model structure on chain complexes. Therefore, cells are maps $D^\then \to C$ and correspond to $n$-chains $c \in C_\then$. A cellular chain complex consists of free $\bbZ$-modules with a basis in each dimension formed by the cells viewed as chains.

\subsection{Diagrams of chain complexes}

In the category $\ChI$ of diagrams of chain complexes, cofibrations are generated similarly by boundary inclusions $\partial D^\then_i \to D^\then_i$ where $D^\then_i$ is a free diagram generated by a single element $x$ at object $i$ sitting in dimension $\then$. This has $\bbZ\mcI(-, i)\in \AbI$ (the free Abelian group on a representable diagram) in dimensions $\then$ and $\then - 1$ with boundary the identity. A diagram of chain complexes is cellular if and only if, in each dimension, it is a direct sum of diagrams of the form $\bbZ\mcI(-, i)$. More concretely, we have the following characterization:

\begin{proposition} \label{prop:cellular_diagrams_complexes}
A diagram $C$ is cellular if and only if there is a collection of cells (i.e.\ chains) $c_\alpha \in C(i_\alpha)_{n_\alpha}$, for $\alpha \in \mcA$, such that any chain $c$ can be obtained uniquely from cells by applying maps in the diagram and linear combinations, i.e.
\begin{equation}\label{eq:cellular}
c = \sum_{\alpha \in \mcA, f \colon i \to i_\alpha} k_{\alpha, f} f^*(c_\alpha)
\end{equation}
for unique $k_{\alpha, f} \in \bbZ$ (only a finite number of non-zero coefficients).

More generally, a cellular pair $(C, C')$ is one for which the above condition is satisfied modulo $C'$.
\end{proposition}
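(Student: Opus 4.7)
The plan is to unravel the definition of a cellular object in $\ChI$---constructed by transfinite iteration of pushouts along the generating cofibrations $\partial D^n_i \to D^n_i$---and match these attachments against the asserted basis. Both implications reduce to a single explicit computation: the effect of attaching one cell.

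For the forward direction I would argue by induction on the cellular structure. Colimits in $\ChI$ are computed degreewise in $\AbI$, so the pushout $A \cup_{\partial D^{n_\alpha}_{i_\alpha}} D^{n_\alpha}_{i_\alpha}$ can be analyzed dimension by dimension. The inclusion $\partial D^{n_\alpha}_{i_\alpha} \hookrightarrow D^{n_\alpha}_{i_\alpha}$ is the identity on $\bbZ\mcI(-, i_\alpha)$ in dimension $n_\alpha - 1$ (vacuous if $n_\alpha = 0$), the inclusion $0 \to \bbZ\mcI(-, i_\alpha)$ in dimension $n_\alpha$, and zero in every other dimension. Hence the pushout agrees with $A$ in every dimension $k \neq n_\alpha$ and equals $A_{n_\alpha} \oplus \bbZ\mcI(-, i_\alpha)$ in dimension $n_\alpha$. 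Writing $c_\alpha$ for the image of the canonical generator $x \in D^{n_\alpha}_{i_\alpha}$, the Yoneda lemma identifies the new summand $\bbZ\mcI(-, i_\alpha)$ at object $j$ with the free abelian group on $\mcI(j, i_\alpha)$, endowing it with the canonical basis $\{ f^*(c_\alpha) : f \colon j \to i_\alpha \}$. Iterating over all cells of the cellular structure produces the claimed unique decomposition \eqref{eq:cellular}.

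For the converse, given a family of cells $c_\alpha$ as in the statement, I would build a cellular structure on $C$ by enumerating $\mcA$ in any order that is non-decreasing in $n_\alpha$; this is possible because the boundary of a chain in dimension $n_\alpha$ lives in strictly lower dimension. Let $C_{<\alpha}$ denote the sub-diagram of $C$ spanned, in the sense of the basis, by the cells preceding $\alpha$. Since $\partial c_\alpha \in C(i_\alpha)_{n_\alpha - 1}$, the uniqueness clause forces its basis expansion to involve only cells $c_\beta$ with $n_\beta = n_\alpha - 1$, so $\partial c_\alpha \in C_{<\alpha}(i_\alpha)_{n_\alpha - 1}$ and there is a well-defined attaching map $\partial D^{n_\alpha}_{i_\alpha} \to C_{<\alpha}$ sending $\partial x \mapsto \partial c_\alpha$. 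By the previous computation, the pushout along $\partial D^{n_\alpha}_{i_\alpha} \to D^{n_\alpha}_{i_\alpha}$ is precisely the sub-diagram $C_{\leq \alpha}$ spanned by the cells up to and including $c_\alpha$, equipped with its inherited differential. The transfinite composite of these attachments is therefore a cellular diagram that coincides with $C$ on the prescribed basis and has matching differential, hence equals $C$. The relative version is identical, starting from $C'$ in place of the initial object and enumerating only cells outside $C'$.

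The step I expect to require the most care is organizational rather than conceptual: one must check that the inductively attached differentials assemble to reproduce the original differential on $C$. This compatibility is built into the definition of each attaching map, but requires the explicit observation that $\partial$ on $C_{\leq \alpha}$, computed via the pushout, matches the restriction of $\partial$ from $C$---which in turn follows from the uniqueness clause in \eqref{eq:cellular} applied in dimension $n_\alpha - 1$.
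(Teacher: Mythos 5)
Your proof is correct, and it takes the route the paper itself implicitly relies on: the paper states this proposition without proof, as an unwinding of the preceding remark that a cellular diagram of chain complexes is, in each dimension, a direct sum of diagrams $\bbZ\mcI(-,i)$, and your degreewise pushout computation plus the dimension-ordered reattachment argument is exactly the standard justification of that remark (including the relative case).
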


As an important example, if $X$ is a diagram of spaces then the \emph{normalized}\footnote{%
	The normalized chain complex of a simplicial set $K$ has $C_n(K)$ freely generated by simplices of $K$ with all degenerate simplices quotiented out.%
} chain complexes $C_*(X(i))$ of the spaces in the diagram form a diagram $C_*(X)$ of chain complexes. Since we are dealing (exclusively) with normalized chain complexes, for a cellular diagram $X$, the diagram $C_*(X)$ of chain complexes is also cellular with cells corresponding bijectively to those of $X$. There is an obvious generalization to the relative situation of a cellular pair $(X, A)$.

\subsection{Bredon cohomology}

For a cellular pair $(X, A)$ of diagrams and for a digram $\pi \in \AbI$ of Abelian groups the cochain complex
\[C^*(X, A; \pi) = \Hom_{\AbI}(C_*(X, A), \pi),\]
equipped with the differential\footnote{%
	Any other choice works equally well, e.g.\ the usual $\delta c = -(-1)^{|c|} \cdot \partial^* c$, as long as this is reflected in the (unspecified) isomorphism $\overline W \Kpin \cong \Kpinpo$ below.
} $\delta = \partial^*$, is called the Bredon cochain complex. The cohomology of this cochain complex $C^*(X, A; \pi)$ is known as \emph{Bredon cohomology}, see \cite{may1996equivariant,bredon1967}. As a functor of the pair $(X, A)$ we will explain shortly that this is represented by an Eilenberg-MacLane diagram.

\subsection{Cofibrant replacement in $\Diag$}

We will use a concrete model for the cofibrant replacement, namely the Bousfield--Kan model. Let $X$ be any diagram. Then the cofibrant replacement $X^\cofr = |BX|$ is a geometric realization of a certain simplicial object $BX$; we start with decribing the involved simplicial object $BX \colon \Delta^\op \to \Diag$; in dimension $\then$ it is
\[(BX)_\then = \coprod_{i_0, \ldots, i_n \in \mcI} Xi_0 \times \mcI(i_1, i_0) \times \cdots \times \mcI(i_\then, i_{\then - 1}) \times \mcI(-, i_\then)\]
with the face map $d_j$ given either by composition, for $j > 0$, with $d_0$ being the right action $Xi_0 \times \mcI(i_1, i_0) \to Xi_1$ coming from $X$ being a contravariant functor $X \colon \mcI^\op \to \sSet$, and with degeneracy maps inserting the identity at various positions.

The geometric realization of $BX$ is then the quotient
\[X^\cofr = \coprod_{n \geq 0;\, i_0, \ldots, i_n \in \mcI} \Delta^\then \times Xi_0 \times \mcI(i_1, i_0) \times \cdots \times \mcI(i_\then, i_{\then - 1}) \times \mcI(-, i_\then) / {\sim},\]
where the relation is similar to that of a tensor product (formally, such a construction is called the coend $\Delta^\bullet *_{\Delta^\op} BX$): we require $(\theta^* t, z) \sim (t, \theta_* z)$, for $t \in \Delta^\them$, $z \in (BX)_\then$ and $\theta$ a morphism in $\Delta^\op$; of course, faces and degeneracies are sufficient to generate all relations.

\begin{lemma} \label{l:cofibrant_replacement_tensors}
For any space $K$ we have $K \times X^\cofr \cong (K \times X)^\cofr$, i.e.\ the cofibrant replacement commutes with $\sSet$-tensors.\qed
\end{lemma}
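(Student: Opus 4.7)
The plan is to unwind both sides of the claimed isomorphism using the explicit Bousfield--Kan formula for the cofibrant replacement and then invoke the fact that $K \times -$ preserves colimits.

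First, I would observe that the product $K \times X$ in $\Diag$ is computed pointwise, i.e.\ $(K \times X)(i) = K \times X(i)$, since limits in a functor category are taken objectwise. Consequently, unwinding the formula for $B(K \times X)$ yields, in simplicial dimension $n$,
\[
(B(K \times X))_n = \coprod_{i_0, \ldots, i_n} K \times Xi_0 \times \mcI(i_1, i_0) \times \cdots \times \mcI(i_n, i_{n-1}) \times \mcI(-, i_n).
\]
Since cartesian product with $K$ distributes over coproducts (the cartesian product in $\sSet$, and hence in $\Diag$, is a left adjoint in each variable because $\sSet$ is cartesian closed), the right-hand side is naturally isomorphic to $K \times (BX)_n$. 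A direct check shows that under this identification the face and degeneracy maps of $B(K \times X)$ correspond to $\mathrm{id}_K$ times those of $BX$; indeed $d_0$ arises from the action of $\mcI(i_1, i_0)$ on $(K\times X)i_0 = K \times Xi_0$, which is $\id_K$ times the action on $Xi_0$, while the remaining face and degeneracy maps only affect the $\mcI$-factors. Thus $B(K \times X) \cong K \times BX$ as simplicial objects in $\Diag$.

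Taking geometric realization, i.e.\ the coend $\Delta^\bullet *_{\Delta^\op} (-)$, I use that $K \times -$ is a left adjoint on $\Diag$ (its right adjoint is the internal hom $(-)^K$, which exists because $\sSet$ is cartesian closed and $\Diag$ inherits the tensoring), so it preserves this colimit:
\[
K \times X^\cofr = K \times \bigl(\Delta^\bullet *_{\Delta^\op} BX\bigr) \cong \Delta^\bullet *_{\Delta^\op} (K \times BX) \cong \Delta^\bullet *_{\Delta^\op} B(K \times X) = (K \times X)^\cofr.
\]
The whole chain of isomorphisms is natural in $X$ and in $K$.

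The only mildly delicate point is verifying that $K \times -$ genuinely commutes with the coend defining geometric realization and with the coproducts indexing the $n$-simplices of $BX$. Both facts reduce to $K \times -$ being a left adjoint, which in turn follows from the cartesian closedness of $\sSet$ and the fact that a cartesian product of a diagram by a constant diagram is computed pointwise. Everything else is bookkeeping on the Bousfield--Kan formula.
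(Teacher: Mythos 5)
Your argument is correct, and it is exactly the computation the paper treats as immediate (the lemma is stated with its proof omitted): the product with $K$ is formed pointwise, so $B(K\times X)\cong K\times BX$ levelwise and compatibly with the simplicial structure maps, and $K\times-$ preserves the coproducts and the realization coend because it is a left adjoint. Nothing is missing; you have simply written out the bookkeeping the authors chose to leave implicit.
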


The cofibrant replacement $X^\cofr$ of any diagram is a cellular diagram. Precise details will not be important for the paper, but are necessary for an implementation of our algorithm. The cells are $(t, x, f_0, \ldots, f_{\then - 1}, \id)$ for any chain
\[i_0 \xlla{f_0} i_1 \lla \cdots \lla i_{\then - 1} \xlla{f_{\then - 1}} i_\then\]
of non-identity morphisms and any non-degenerate simplex $(t, x) \in \Delta^n \times Xi_n$ not contained in $\partial\Delta^n \times Xi_n$. 
The non-degenerate simplices of a product can be described equivalently as pairs $(s_J \overline t, s_K \overline x)$ for non-degenerate $\overline t$, $\overline x$ and disjoint index sets $J$, $K$.

\label{s:cofibrant_replacment_math}

\subsection{Eilenberg--MacLane spaces}\label{s:emlspaces}

Given a group $\pi$ and an integer $n \geq 0$, an Eilenberg--MacLane space $\Kpin$ is a simplicial set satisfying
\[
\pi_k (\Kpin)= 
\left\{
	\begin{array}{ll}
		\pi & \mbox{for } k = n,\\
		0 & \mbox{else}.
	\end{array}
\right.
\]
In this text the symbol $\Kpin$ will always stand for the following concrete simplicial model, see~\cite[page 101]{may}
\[\Kpin_k = Z^n (\Delta^k; \pi),\] 
where $Z^n$ denotes the Abelian group of normalized cocycles. Similarly, we define the contractible space $WK(\pi, n)$ as
\[WK(\pi, n)_k = C^n (\Delta^k; \pi)\]
where $C^n$ denotes the Abelian group of normalized cochains. Since both are simplicial groups, they are fibrant.

According to \cite[Theorem 23.10]{may}, the universal principal bundle with fibre $K(\pi, n)$, i.e.
\[K(\pi, n) \hra WK(\pi, n) \xra{\delta} \overline WK(\pi, n),\]
has $\overline WK(\pi, n)$ isomorphic to $K(\pi, n+1)$ (a concrete isomorphism can be found in \cite{polypost}) and we will thus consider these spaces equal. The map $\delta$ is then the coboundary from the $n$-cochains to $(n+1)$-cocycles.

In the computational part, declaring the two spaces equal amounts to applying the canonical isomorphism and its inverse. These are given by straightforward formulas running in polynomial time, see \cite[Lemma~3.15]{polypost}.

\subsection{Principal twisted cartesian products}

Let $X$ be a simplicial set and $G$ a simplicial group. As in the preceding section, there is a universal principal bundle with fibre $G$

\[G \hra WG \xra{\delta} \overline W G.\]

A simplicial map $\tau \colon X \to \overline W G$ is known as a \emph{twisting function} and prescribes a \emph{principal twisted cartesian product} $X \times_\tau G \to X$, a simplicial analogue of a principal bundle. It is obtained by replacing one of the face operators in the usual cartesian product according to $\tau$, but we will not need to explain details here. There is an obvious extension to diagrams -- if $X$ is a diagram of simplicial sets and $G$ a diagram of simplicial groups, a twisting function $\tau \colon X \to \overline W G$ is then just a compatible family of twisting functions at each object and thus prescribes a ``compatible'' family of principal twisted cartesian products; explicitly, compatibility means that the canonical projection $X \times_\tau G \to X$ is a natural transformation, i.e.\ a map in $\Diag$. We also note that
\[\xymatrix{
X \times_\tau G \ar[r] \ar[d] & WG \ar[d]^-{\delta} \\
X \ar[r]^-\tau & \overline WG
}\]
is a map of principal bundles and is thus a pullback square.

\subsection{Principal bundles categorically}

For the purposes of a later generalization, we will define principal bundles categorically in any complete category $\mcC$ in the following way\footnote{%
	The definition will not capture surjectivity of the bundle projection, so that even the empty space over $X$ will be a principal $G$-bundle according to our definition.
}:

Let $G$ be a group object in $\mcC$. A \emph{$G$-torsor} is an object $P$ of $\mcC$ with a simply transitive (right) action of $G$. A regular action of $G$ on itself presents $G$ as a $G$-torsor. Thinking of the group additively, the simple transitivity is expressed as a difference map $P \times P \to G$, a generalization of the association $(A, B) \mapsto \overrightarrow{AB} = -A+B$ from the theory of affine spaces; we will use the nicer looking $B-A$ since we will have commutativity anyway. It is a simple matter to write down a set of axioms (e.g.\ $x + (y - x) = y$, as for affine spaces), each expressed as commutativity of a diagram involving finite products of $G$ and $P$. In particular, any functor that preserves finite products will automatically preserve group objects and their torsors.

\begin{example}
A non-empty torsor in the category of sets is a so-called heap, which we define later. Namely, for a heap $S$ and for any choice of zero $0 \in S$, the heap $S$ becomes a group, so that it possesses the regular right action on itself. For different choices, the induced groups are canonically isomorphic and the actions are identified under this isomorphism. The empty set is (in our definition) a torsor for any group.
\end{example}

A \emph{principal $G$-bundle} is a map $P \to X$, thought of as an object of the slice category $\mcC/X$, that is a torsor for the trivial group object $X \times G \to X$ in $\mcC/X$, given by the projection. A simple example of the preservation of torsors is the fact that principal $G$-bundles are closed under pullbacks -- the pullback functor $f^* \colon \mcC/X \to \mcC/Y$ clearly preserves all limits. Also, any functor from $\mcC$ preserving finite limits will preserve principal bundles, since the product in the slice category $\mcC/X$ is the pullback in $\mcC$. Explicitly, the structure maps for a principal bundle in terms of the category $\mcC$ are:
\begin{align*}
{+} & \colon G \times G \to G, & 0 & \colon * \to G, & {-} & \colon G \to G \\
{+} & \colon P \times G \to P, & & & {-} & \colon P \times_X P \to G
\end{align*}
where the action of $G$ on $P$ is required to be a map over $X$.

For a principal twisted cartesian product $P = X \times_\tau G \to X$, the last two maps are: the action
\[{+} \colon P \times G \to P,\quad (x, a) + g = (x, a + g)\]
(a map over $X$), and the difference
\[{-} \colon P \times_X P \to G, \quad (x, a) - (x, b) = a - b.\]

In particular, we will need principal bundles whose fibres are the diagrams of Eilenberg--MacLane spaces, which we describe next.

\subsection{Eilenberg--MacLane diagrams}

For a diagram $\pi \in \AbI$, we define a diagram of Eilenberg--MacLane spaces $\Kpin \in \Diag$ objectwise, i.e.\ by setting
\[\Kpin(i) = K(\pi(i), n).\]
Analogously, we define the diagram $WK(\pi, n)(i) = WK(\pi(i), n)$. Both these diagrams are fibrant (since they consist of fibrant objects).

The advantage of the concrete models described above is that maps to these diagrams can be identified with cochains and cocycles of the Bredon cochain complex. The following lemmas are easy generalizations of results in~\cite{may}.

\begin{proposition}\label{prop:EML-map}
Let $(X, A)$ be a pair of diagrams and let $\pi \in \AbI$. Then there are natural isomorphisms
\begin{align*}
\map((X, A), (W\Kpin, 0)) & \cong C^n(X, A; \pi), \\
\map((X, A), (\overline W\Kpin, 0)) & \cong Z^{n+1}(X, A; \pi).
\end{align*}
For a relative cell complex $(X, A)$ (or more generally for a cofibration $A \to X$), we also have
\[[X, \overline W\Kpin]^A \cong H^{n+1}(X, A; \pi),\]
where again maps on the left are fixed to be zero on $A$.
\end{proposition}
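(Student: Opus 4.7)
The plan is to reduce everything to the classical non-diagram statements and then upgrade to diagrams via naturality.

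First, I would recall the single-space computation. Since $(W\Kpin)_k = C^n(\stdsimp k; \pi)$ and $(\overline W\Kpin)_k = Z^{n+1}(\stdsimp k;\pi)$, and since simplicial maps out of a simplicial set are determined by their values on simplices, Yoneda gives natural bijections
\[\map(Y, W\Kpin) \cong C^n(Y;\pi), \qquad \map(Y, \overline W\Kpin) \cong Z^{n+1}(Y;\pi),\]
for any simplicial set $Y$ and abelian group $\pi$. The relative version (maps sending $B\subseteq Y$ to $0$) replaces the right-hand side by the kernel of the restriction to $B$, i.e.\ by $C^n(Y,B;\pi)$ or $Z^{n+1}(Y,B;\pi)$.

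Next, I would apply this objectwise. A map $f\colon X\to W\Kpin$ in $\Diag$ is by definition a compatible family $\{f_i\colon X(i)\to WK(\pi(i),n)\}_{i\in\mcI}$, and each $f_i$ corresponds to a normalized cochain $c_i\in C^n(X(i);\pi(i))$ vanishing on $A(i)$. The compatibility of the family $\{f_i\}$ with respect to morphisms of $\mcI$ translates, under this bijection, to naturality of $i\mapsto c_i$, i.e.\ to an element of
\[\Hom_{\AbI}(C_n(X,A),\pi) = C^n(X,A;\pi).\]
The identical argument with $Z^{n+1}$ in place of $C^n$ yields the second isomorphism. Both bijections are natural in $(X,A)$.

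For the last identification, I would use that $\overline W\Kpin$ is a diagram of simplicial groups, hence levelwise a Kan complex, hence fibrant in the projective model structure on $\Diag$; and that a relative cell complex $(X,A)$ is cofibrant in $A/\Diag$. Therefore $[X,\overline W\Kpin]^A$ is computed by honest homotopy classes of maps rel $A$, and a homotopy relative to $A$ is exactly a map $H\colon X\times I\to \overline W\Kpin$ that vanishes on $A\times I$ and restricts to $f_0,f_1$ at the endpoints. By the previous step together with Lemma~\ref{l:cofibrant_replacement_tensors} (and the obvious fact that $C^*(X\times I,\,A\times I\cup X\times\partial I;\pi)$ is computed via the tensor product with the chains on $I$ relative to its boundary), such homotopies are in natural bijection with $n$-cochains $c\in C^n(X,A;\pi)$ whose coboundary is $z_1-z_0$, where $z_0,z_1\in Z^{n+1}(X,A;\pi)$ are the cocycles representing $f_0,f_1$. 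Passing to equivalence classes yields $[X,\overline W\Kpin]^A\cong H^{n+1}(X,A;\pi)$.

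The only non-formal step is the homotopy identification: one has to check that the slant/integration operation $H\mapsto c$ indeed produces a cochain whose coboundary is $z_1-z_0$, and conversely that every such cochain lifts to a homotopy. This is the classical argument for simplicial abelian groups and goes through unchanged at the diagram level because everything is computed objectwise and the relevant maps (coboundary, restriction to the endpoints, restriction to $A$) are natural in $\mcI$.
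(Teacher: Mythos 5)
Your proposal is correct and follows essentially the same route as the paper, which offers no argument beyond noting that these are easy generalizations of the results in \cite{may}: namely, the single-space correspondences $\map(Y, W\Kpin)\cong C^n(Y;\pi)$, $\map(Y,\overline W\Kpin)\cong Z^{n+1}(Y;\pi)$ applied objectwise, with the compatibility over $\mcI$ absorbed exactly by the Bredon complex $\Hom_{\AbI}(C_*(X,A),\pi)$, and fibrancy of $\overline W\Kpin$ plus cofibrancy of $(X,A)$ to identify $[X,\overline W\Kpin]^A$ with genuine homotopy classes rel $A$. One small caveat: the intermediate assertion that homotopies rel $A$ are in \emph{bijection} with cochains $c\in C^n(X,A;\pi)$ satisfying $\delta c = z_1-z_0$ is an overstatement (the integration operation is surjective but far from injective, as distinct cocycles on $X\times I$ with the same ends can yield the same $c$), but this does not matter since your closing paragraph correctly isolates the only two facts needed — that each homotopy yields such a $c$ and that each such $c$ yields a homotopy — both of which are the classical computations, natural in $i\in\mcI$.
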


\subsection{Postnikov tower of a space} \label{s:Postnikov_tower_motivation}

We will give a very concise definition of a Postnikov tower of a space, mainly to explain that this definition has to be modified for diagrams; this case will then be treated in much more detail.

A Postnikov tower of a simply connected space $Y$ is a collection of maps $Y \to \Pnewst$ that display $\Pnewst$ as the result of killing homotopy groups of $Y$ above dimension $\then$. These approximations are organized in a tower
\[\cdots \to \Pnewst \to \Poldst \to \cdots \to \Pzerost;\]
i.e.\ the stages are connected by maps $\Pnewst \to \Poldst$. These are principal fibrations whose fibre is necessarily $\Lnst$, for $\pin$ the $\then$-th homotopy group of $Y$. In the standard model, they are even principal twisted cartesian products and as such are classified by a homotopy class $\kn \colon \Poldst \to \overline WK(\pin, \then)$, known as Postnikov invariant. One may then write
\[\Pnewst = \Poldst \times_{\kn} K(\pin, \then)\]
to get a very concrete inductive construction of the Postnikov tower, see \cite{polypost} for the algorithmic viewpoint. The Postnikov towers are employed in the algorithm by observing that $[X, Y]^A \cong [X, \Pnewst]^A$, for $\then \geq \dim X$, and also by relating $[X, \Pnewst]^A$ to $[X, \Poldst]^A$ via a long exact sequence that enables inductive computation.

The following problem occurs for diagrams: the diagram $\Poldst$ is not cofibrant in general and, as a result, the Postnikov invariant does not exist as an actual map $\Poldst \to \overline WK(\pin, \then)$, but rather as a map defined on its cofibrant replacement $\Poldst^\cofr$. As a result, the $\then$-th stage $\Pnewst$ is constructed as
\[\Pnewst = \Poldst^\cofr \times_{\kn} K(\pin, \then)\]
and will need to be cofibrantly replaced for the construction of $\towercomp{P}{\then+1}$ etc. Thus, for diagrams, a tower of the above simple shape must be replaced by a notion that incorporates cofibrant replacements. We will first define a general notion of such a tower and then give a precise definition of a Postnikov tower for a diagram of spaces.

\subsection{Towers}

\begin{sdefinition}
A \emph{tower} $T$ is a collection of diagrams $\Tm$, for $\them \geq 0$, together with maps $\Tm \to \Tmmoc$. A \emph{map of towers} $\varphi \colon S \to T$ is a collection of maps $\varphim \colon \Sm \to \Tm$ for which the square
\[\xymatrix@C=4pc{
\Sm \ar[r]^-{\varphim} \ar[d] & \Tm \ar[d] \\
\Smmoc \ar[r]_-{\cofrst{\varphimmo}} & \Tmmoc
}\]
commmutes for all $\them$. We denote by $\Tow$ the category of towers of diagrams.

An \emph{$\then$-restricted tower} is the collection of data as above, but with both $\Tm$ and $\Tm \to \Tmmoc$ defined only for $\them \leq \then$. The category of $\then$-restricted towers will be denoted $\Town$.
\end{sdefinition}

There is a pair of adjunctions
\[\xymatrix{
\towercomp{}{\then} \colon \Tow \ar@<.4em>[r]^-{\towercomp{}{\leq\then}} \ar@{}[r]|-{\bot} & \Town \ar@<.4em>[l]^-{\extn} \ar@<.4em>[r]^-{\towercomp{}{\then}} \ar@{}[r]|-{\bot} & \Diag \loc \susp{}{\then} \ar@<.4em>[l]
}\]
with the first top functor (left adjoint) restricting a tower to $\them \leq \then$ and the second associating to an $\then$-restricted tower the diagram sitting at the top level $\then$.

The bottom functors (right adjoints) are easily described as follows: The first one
\[{\extn \colon \Town \to \Tow}\]
extends the $\then$-restricted tower $T$ by iterated cofibrant replacements of $\towercomp{T}{\then}$ in such a way that the structure maps $\towercomp{T}{\them} \to \towercomp{T}{\them-1}^\cofr$ are the identity maps, for $\them > \then$. Since a $0$-restricted tower is exactly a diagram, we may view $\extz$ as a functor $\Diag \to \Tow$ and, from now on, we will not distinguish between a diagram $Z$ and its extension $\extz Z$ (consisting of iterated cofibrant replacements of the diagram $Z$). We will thus write
\[\Diag \subseteq \Tow.\]
In particular, the terminal diagram $\pt$ will be thought of as a tower in the following description. The second right adjoint $\Diag \to \Town$ sends a diagram $Z \in \Diag$ to the $\then$-restricted tower $T$ with $\towercomp{T}{\them} = \towercomp{\pt}{\them}$ for $\them < \then$ and $\towercomp{T}{\then} = \towercomp{\pt}{\then} \times Z$. We will not need a name for this functor, but the composite of the right adjoints will be denoted by $\susp{}{\then} \colon \Diag \to \Tow$ and is clearly right adjoint to the $\then$-th level functor $\towercomp{}{\then} \colon \Tow \to \Diag$ (the composite of the left adjoints). Also $\susp{}{0} = \extz$.

\begin{sdefinition}
We say that a tower $T$ is \emph{$\then$-truncated}, if it lies in the image of the extension functor $\extn$, i.e.\ if the structure maps $\Tm \to \Tmmoc$ are the identity maps for $\them > \then$. The \emph{$\then$-truncation} $\trunc{T}{\then}$ is the composite $\extn(\towercomp{T}{\leq \then})$ and admits a canonical map (the unit of the adjunction) $T \to \trunc{T}{\then}$.
\end{sdefinition}

\subsection{Homotopy groups of diagrams and towers}

Let $Y \in \Diag$ be a diagram of simply connected spaces. We denote by $\pi_n Y \in \AbI$ the diagram of the $n$-th homotopy groups of the spaces in the diagram $Y$. This makes sense since the $n$-th homotopy group is a functor $\pi_\then$ on simply connected spaces (it is however not a functor on all spaces). 

\begin{sdefinition}
For an $\then$-truncated tower $T$, we define its $j$-th homotopy group to be $\pi_j(T) = \pi_j(\towercomp{T}{\then})$. We note that $T$ is then also $\them$-truncated, for any $\them \geq \then$, and the definition of $\pi_j(T)$ is independent of $\them$.
\end{sdefinition}

\subsection{Principal bundles}

Let $P \to X$ be a principal $G$-bundle in the category of diagrams. For any $\then \geq 0$, the right adjoint $\susp{}{\then}$ preserves limits and, thus, $\susp{P}{\then} \to \susp{X}{\then}$ is a principal $\susp{G}{\then}$-bundle. Concretely, this involves actions of iterated cofibrant replacements of $\pt$ and $G$ and of course can be verified directly.

\subsection{Pullback of towers}

\begin{lemma} \label{lem:pullback_of_towers}
A square of $\then$-truncated towers
\[\xymatrix{
S \ar[r] \ar[d] & U \ar[d] \\
T \ar[r] & V
}\]
in which $\trunc{U}{\then-1} \xra\cong \trunc{V}{\then-1}$ is an isomorphism, is cartesian if and only if it is cartesian at each level $\them \leq \then$. Explicitly, this means $\towercomp{S}{\them} = \towercomp{T}{\them}$ for $\them < \then$ and $\towercomp{S}{\then} = \towercomp{T}{\then} \times_{\towercomp{V}{\then}} \towercomp{U}{\then}$.

In particular, $T \times \susp{Z}{\then}$ agrees with $T$ up to level $\then$ and its level $\then$ is $\towercomp{T}{\then} \times Z$.
\end{lemma}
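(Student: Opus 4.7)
The plan is to construct an explicit candidate pullback $W = T \times_V U$ in $\Tow$ by pointwise pullback in $\Diag$, verify it carries a natural tower structure making the projections $W \to T$ and $W \to U$ into maps of towers, and then check the universal property directly; the ``only if'' direction will then follow by uniqueness of limits. The main obstacle will be the construction of a coherent structure map at level $\then$: since cofibrant replacement does not preserve pullbacks in general, the hypothesis on $(\then-1)$-truncations is exactly what is needed to collapse the pullback below level $\then$ and thereby permit the canonical identification $\cofrst{\towercomp{W}{\them-1}} \cong \cofrst{\towercomp{T}{\them-1}} \cong \cofrst{\towercomp{V}{\them-1}} \cong \cofrst{\towercomp{U}{\them-1}}$ required to glue the two projections.

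First, at each level $\them \leq \then$ I would set $\towercomp{W}{\them} = \towercomp{T}{\them} \times_{\towercomp{V}{\them}} \towercomp{U}{\them}$, with levels $\them > \then$ forced by $\then$-truncation. By the hypothesis $\trunc{U}{\then-1} \cong \trunc{V}{\then-1}$, the map $\towercomp{U}{\them} \to \towercomp{V}{\them}$ is an isomorphism for every $\them < \then$, so the projection $\towercomp{W}{\them} \to \towercomp{T}{\them}$ is an iso and we canonically identify $\towercomp{W}{\them}$ with $\towercomp{T}{\them}$; at level $\then$ we obtain the honest pullback $\towercomp{T}{\then} \times_{\towercomp{V}{\then}} \towercomp{U}{\then}$. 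For $\them \leq \then - 1$, the structure map $\towercomp{W}{\them} \to \cofrst{\towercomp{W}{\them-1}}$ is inherited from $T$ under this identification; at level $\then$ I would define it as the composite $\towercomp{W}{\then} \to \towercomp{T}{\then} \to \cofrst{\towercomp{T}{\then-1}} = \cofrst{\towercomp{W}{\then-1}}$. Commutativity of the original square, combined with the canonical isos induced by the truncation hypothesis, ensures the parallel composite $\towercomp{W}{\then} \to \towercomp{U}{\then} \to \cofrst{\towercomp{U}{\then-1}}$ agrees with the one above, so both projections $W \to T$ and $W \to U$ become bona fide maps of towers.

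For the universal property, given any tower $R$ with maps $\alpha \colon R \to T$ and $\beta \colon R \to U$ whose composites into $V$ agree, the levelwise pullback property in $\Diag$ produces unique maps $\gamma_\them \colon \towercomp{R}{\them} \to \towercomp{W}{\them}$; verifying that $\gamma$ is a map of towers reduces, after post-composition with the jointly monic projections of $\cofrst{\towercomp{W}{\them-1}}$ into $\cofrst{\towercomp{T}{\them-1}}$ and $\cofrst{\towercomp{U}{\them-1}}$, to $\alpha$ and $\beta$ themselves being maps of towers. This settles both directions of the lemma. For the second statement I would apply the first part with $U = \susp{Z}{\then}$ and $V = \pt$: the map $\susp{Z}{\then} \to \pt$ is literally the identity on levels below $\then$, so it is in particular an isomorphism of $(\then-1)$-truncations, and the formula just established reads $\towercomp{(T \times \susp{Z}{\then})}{\them} = \towercomp{T}{\them}$ for $\them < \then$ and $\towercomp{T}{\then} \times Z$ at level $\then$.
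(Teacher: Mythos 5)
Your proof is correct and takes essentially the same route as the paper, whose proof is just the remark that the claim ``follows directly from the definition of pullback and the fact that pullbacks over diagrams are taken pointwise''; you simply spell out the levelwise construction, the way the hypothesis on $(\then-1)$-truncations supplies the structure map at level $\then$, and the universal-property check that the paper leaves implicit.
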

\begin{proof}
The proof follows directly from the definition of pullback and the fact that pullbacks over diagrams are taken pointwise.
\end{proof}

\subsection{Postnikov tower for diagrams}\label{s:Postnikov_tower_diagrams}

Let $Y$ be a diagram of simply connected spaces. Letting $\pin \in \AbI$ be a diagram of Abelian groups (it will follow from the axioms that $\pi_n \cong \pi_n Y$, hence the name), we introduce the abbreviations
\[\Ln = \susp{\Lnst}{\then}, \quad
\En = \susp{\Enst}{\then}, \quad
\Kn = \susp{\Knst}{\then}.\]
As explained above, $\En \to \Kn$ is a principal $\Ln$-bundle.

\begin{definition}
A \emph{Postnikov system} of $Y$ is a map of towers $\varphi \colon Y \to P$, satisfying the following conditions for the $\then$-truncation $\Pnew = \trunc{P}{\then}$ and the associated $\varphi_\then \colon Y \to \Pnew$:

\begin{enumerate}\setcounter{enumi}{-1}
\item
For each $\then\ge 0$, there is given a diagram $\pin$ of Abelian groups.
\item
For each $\then\ge 0$,
\begin{enumerate}
\item
the induced map $\varphi_{\then*} \colon \pi_j(Y) \to \pi_j(\Pnew)$ is an isomorphism for $0\le j\le\then$,

\item
$\pi_j(\Pnew)=0$ for $j > \then$.

\end{enumerate}

\item\label{ei:pullback}
For each $\then\ge 0$, there is given a pullback square
\[\xymatrix{
\Pnew \ar[r]^-{\qn} \ar[d]_-{\pn} \pb & \En \ar[d]^-{\deltan} \\
\Pold \ar[r]_-{\kn} & \Kn
}\]
\end{enumerate}
\end{definition}

Towers $\Pnew$ are called \emph{stages} of the Postnikov system, and maps $\kn$ are called \emph{Postnikov classes} (the terms \emph{Postnikov factors} or \emph{Postnikov invariants} are also used in the literature). These are part of the structure of a Postnikov system.

We note that, since $\deltan \colon \En \to \Kn$ is a principal $\Ln$-bundle, so is its pullback $\pn \colon \Pnew \to \Pold$ and, in particular, there is an action $\Pnew \times \Ln \to \Pnew$ and a difference $\Pnew \times_{\Pold} \Pnew \to \Ln$.


For the sake of completeness, we also provide a description of the conditions in the definition in terms of the levels $\Pnewst$ of the Postnikov tower $P$. However, whenever possible the more compact and symmetric version with towers will be used.

\begin{lemma} \label{lem:Postnikov_tower_via_levels}
In terms of the levels $\Pnewst$ of the Postnikov tower $P$, the conditions are equivalent to:
\begin{enumerate}\setcounter{enumi}{-1}
\item
For each $\then\ge 0$, there is given a diagram $\pin$ of Abelian groups.
\item
For each $\then\ge 0$,
\begin{enumerate}
\item
the induced map $\varphi_{\then*} \colon \pi_j(\towercomp{Y}{\then}) \to \pi_j(\Pnewst)$ is an isomorphism for $0\le j\le\then$,

\item
$\pi_j(\Pnewst)=0$ for $j > \then$.

\end{enumerate}

\item
The $n$-the level $\Pnewst$ is a pullback in the following diagram
\[\xymatrix{
\Pnewst \ar[r]^-{\qn} \ar[d]_-{\pn} \pb & \Enst \ar[d]^-{\deltan} \\
\cofrst{\Poldst} \ar[r]_-{\kn} & \Knst
}\]
\end{enumerate}
\end{lemma}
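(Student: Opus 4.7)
My plan is to verify the equivalence condition by condition, using Lemma~\ref{lem:pullback_of_towers} as the main workhorse for the substantive case (condition~2). Condition~0 is identical in both formulations. Condition~1 is a direct unpacking of definitions, while condition~2 requires translating a tower-level pullback into a single level-$\then$ pullback, and this is where the only genuine bookkeeping lives.

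For condition~1, I use that $\Pnew = \trunc{P}{\then}$ agrees with $P$ at level $\then$, so $\towercomp{\Pnew}{\then} = \Pnewst$, and hence by the definition of the homotopy groups of an $\then$-truncated tower one has $\pi_j(\Pnew) = \pi_j(\Pnewst)$. For the domain $Y$, viewed via $\extz$ as a tower whose structure maps are iterated cofibrant replacements (hence weak equivalences), there is a canonical isomorphism $\pi_j(Y) \cong \pi_j(\towercomp{Y}{\then})$ by the independence part of the definition. Under these identifications, the induced map $\varphi_{\then *}$ at the tower level coincides with $\varphi_{\then *}$ at level $\then$, so parts (a) and (b) of the two formulations match.

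For condition~2, I apply Lemma~\ref{lem:pullback_of_towers} to the square of $\then$-truncated towers (noting that $\Pold = \trunc{P}{\then-1}$ is $(\then-1)$-truncated, hence also $\then$-truncated). The hypothesis $\trunc{\En}{\then-1} \xra{\cong} \trunc{\Kn}{\then-1}$ of that lemma is verified by inspection: from the construction of $\susp{}{\then}$, both $\En$ and $\Kn$ reduce below level $\then$ to (iterated cofibrant replacements of) the terminal tower $\pt$, and the map between them at these levels is the identity. The lemma then breaks the tower pullback into pullbacks at each level $\them \leq \then$. For $\them < \then$ the level pullback is automatically $\towercomp{\Pnew}{\them} = \towercomp{\Pold}{\them}$, a property built into $\trunc{P}{\then-1}$. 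At level $\then$, the lemma yields
\[\Pnewst \;\cong\; \cofrst{\Poldst} \times_{\towercomp{\pt}{\then} \times \Knst} (\towercomp{\pt}{\then} \times \Enst),\]
and since both maps in the lower row factor compatibly through the projection to $\towercomp{\pt}{\then}$, this simplifies to $\Pnewst \cong \cofrst{\Poldst} \times_{\Knst} \Enst$, precisely the pullback stated in the lemma.

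The main, modest obstacle lies in this last step: correctly identifying $\towercomp{\En}{\then} = \towercomp{\pt}{\then} \times \Enst$ and $\towercomp{\Kn}{\then} = \towercomp{\pt}{\then} \times \Knst$ from the explicit description of $\susp{}{\then}$, and then checking that the extraneous $\towercomp{\pt}{\then}$ factor cancels from the pullback. Once this unraveling is done, everything else is a direct application of the preceding lemmas and definitions.
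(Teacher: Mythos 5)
Your proof is correct and follows essentially the same route as the paper, which also treats the homotopy-group condition as a direct unwinding of the definitions and deduces the pullback condition as an instance of Lemma~\ref{lem:pullback_of_towers}. You merely fill in details the paper leaves implicit, notably the identification of the levels of $\En$ and $\Kn$ and the cancellation of the $\towercomp{\pt}{\then}$ factor in the level-$\then$ pullback.
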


\begin{proof}
The first point is clear and the second is an instance of Lemma~\ref{lem:pullback_of_towers}.
\end{proof}

We will only work with $\then$-truncated towers, where $\then = \dim X$. Clearly, a map between $\then$-truncated towers is the same as a map between the restricted towers $\towercomp{T}{\them}$, $\them \leq \then$. For this reason, it will be possible to represent towers and maps between them in a computer.

\begin{lemma}\label{l:maps_to_n_truncated}
%
A map $\varphi \colon S \to T$ from a 0-truncated tower $S$ to an $\then$-truncated tower $T$ is determined uniquely by the component $\towercompmap{\varphi}{\then}$. However, not every such map $\towercompmap{\varphi}{\then}$ determines a map of towers.
\end{lemma}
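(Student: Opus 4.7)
The plan is to analyse the compatibility square for a tower map by exploiting the special form of $S$. Since $S$ is $0$-truncated, every structure map $p^S_\them \colon \towercomp{S}{\them} \to \cofrst{\towercomp{S}{\them-1}}$ is the identity, so the compatibility square collapses to the system of equations
\[p^T_\them \circ \varphi_\them = \cofrst{\varphi_{\them-1}},\qquad \them \geq 1.\]
I would deduce uniqueness of $\varphi$ from $\varphin$ by handling the cases $\them > \then$ and $\them \leq \then$ separately.

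Above level $\then$, since $T$ is $\then$-truncated, $p^T_\them$ is the identity for $\them > \then$, and the displayed equation reduces to $\varphi_\them = \cofrst{\varphi_{\them-1}}$. Iterating recovers each $\varphi_\them$ as the $(\them-\then)$-fold Bousfield--Kan cofibrant replacement of $\varphin$. Below level $\then$, I would argue by downward induction on $\them$: assuming $\varphi_\them$ is fixed, the same equation pins down $\cofrst{\varphi_{\them-1}} = p^T_\them \circ \varphi_\them$, and $\varphi_{\them-1}$ is then recovered by faithfulness of the Bousfield--Kan construction. Concretely, the explicit description reviewed in Section~\ref{s:cofibrant_replacment_math} shows that $\cofrst g$ sends each ``seed'' cell $(\id_{[0]}, x, \id_i) \in \cofrst X$ to $(\id_{[0]}, g(x), \id_i) \in \cofrst Y$, so $g$ can be read off from $\cofrst g$ by restricting to seed cells.

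For the negative statement, the same analysis exposes the obstruction to extending a prescribed $\varphin$: the composite $p^T_\then \circ \varphin$ must lie in the essential image of the Bousfield--Kan cofibrant replacement functor, which is a proper subclass of morphisms between cofibrant replacements. A generic $\varphin$ fails this condition -- for instance, any $\varphin$ for which $p^T_\then \circ \varphin$ carries a seed cell of $\cofrst{\towercomp{S}{\then-1}}$ to a non-seed cell of $\cofrst{\towercomp{T}{\then-1}}$ cannot be of the form $\cofrst{\varphi_{\then-1}}$, so such a $\varphin$ provides an explicit counterexample. The main technical point is the faithfulness of $(-)^{\cofr}$, but this follows routinely from the coend presentation and the fact that $g$ acts only on the ``$X$-component'' of each cell.
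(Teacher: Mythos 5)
Your argument is correct and is essentially the paper's own proof: the components above level $\then$ are forced by the $\then$-truncatedness of $T$ (together with the collapsed squares coming from the $0$-truncatedness of $S$), and the components below level $\then$ are recovered by downward induction using faithfulness of the Bousfield--Kan cofibrant replacement functor, which is exactly the one-line argument the paper gives. Your extra justification of faithfulness via the ``seed'' cells $(\ast, x, \id_i)$, and the resulting necessary condition explaining why an arbitrary $\towercompmap{\varphi}{\then}$ need not extend, goes slightly beyond what the paper records but is consistent with its explicit description of $X^{\cofr}$.
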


\begin{proof}
The components $\varphim$ with $\them > \then$ are determined from $\varphin$ by the $\then$-truncatedness of $T$, while the components $\varphim$ with $\them < \then$ by the 0-truncatedness
of $S$ and from the cofibrant replacement functor $\cofrst{(\ )}$ being faithful.
\end{proof}

\begin{theorem}\label{t:from_Y_to_Pn}
Let $(X, A)$ be a cellular pair. Then there is an isomorphism $[X, Y]^A \cong [X, \Pnewst]^A$ for $\then \geq \dim X$, where $\dim X$ is to be interpreted as the highest dimension of a cell in a cellular structure on $(X, A)$.
\end{theorem}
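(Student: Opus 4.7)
The plan is to apply obstruction theory inductively along the cellular structure of the pair $(X, A)$, with the isomorphism realized by postcomposition with the $\then$-th level component $\varphin \colon \Yn \to \Pnewst$ of the Postnikov-system transformation $\varphi$.

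First, I would observe that by axioms 1(a) and 1(b) of a Postnikov system, the map $\varphin(i) \colon \Yn(i) \to \Pnewst(i)$ induces an isomorphism on $\pi_j$ for $j \leq \then$ and its target has $\pi_j = 0$ for $j > \then$, at every object $i \in \mcI$. The long exact sequence of a fibration (after replacing $\varphin(i)$ by a fibration) then shows that the homotopy fibre $F(i)$ is $\then$-connected for every $i$.

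For surjectivity, given $f \colon X \to \Pnewst$ extending the fixed $A \to \Pnewst$, I would construct a lift $\tilde f \colon X \to \Yn$ by extending it one cell at a time along the cellular structure from Proposition~\ref{prop:cellular_diagrams_spaces}. Each cell has the form $\Delta^k \times \mcI(-, i_\alpha)$ with $k \leq \then$, and by the tensor--hom adjunction, extending the lift across such a cell reduces to the classical simplicial-set lifting problem for $\varphin(i_\alpha)$ with prescribed boundary data. The obstruction lies in $\pi_{k-1}(F(i_\alpha))$, which vanishes since $F(i_\alpha)$ is $\then$-connected. For injectivity, the same cell-by-cell argument applies to the cellular pair $(X \times I, X \times \partial I \cup A \times I)$, whose cells have dimension at most $\then + 1$; the obstructions then lie in $\pi_\then(F(i_\alpha)) = 0$, so any relative homotopy in $\Pnewst$ can be lifted to $\Yn$.

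The main technical subtlety will be that $\Pnewst$ is in general not fibrant as a diagram (it is a pullback over the cofibrant replacement $\cofrst{\Poldst}$), and that $\Yn$ is itself an iterated Bousfield--Kan cofibrant replacement of $Y$; thus $[X, Y]^A$ and $[X, \Pnewst]^A$ must be interpreted coherently via the model-categorical formalism from Section~\ref{s:math_background}. Passing to a pointwise fibrant replacement is harmless since it preserves homotopy groups and hence the fibre-connectivity statement, after which the obstruction argument above proceeds in the usual way.
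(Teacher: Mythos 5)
Your proposal is correct and is essentially the paper's own approach: the paper's proof just remarks that this is the Whitehead theorem, that ``the usual proof can be adopted,'' and cites an abstract model-categorical Whitehead theorem, and your argument — replacing the $\then$-th level of $\varphi$ by a pointwise fibration with $\then$-connected fibres and lifting cell by cell over $(X,A)$ (cells of dimension $\le\then$) and over the cylinder pair (cells of dimension $\le\then+1$) — is precisely that usual proof carried out for diagrams. Your reduction of each cell $\Delta^k\times\mcI(-,i_\alpha)$ to a pointwise lifting problem via the Yoneda/tensor adjunction, together with the fibrancy caveats you note, is exactly how the classical argument adapts to the projective model structure, so there is no gap.
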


\begin{proof}
This is essentially the Whitehead theorem and the usual proof can be adopted. An abstract Whitehead theorem in model categories is proved in \cite[Theorem~2.2]{Vokrinek-algoheaps} and it applies here as well.
\end{proof}

We stress however that $\Pnewst$ is not fibrant and, thus, the homotopy classes are not represented by maps of diagrams. On the other hand, there is a model structure on the category of towers in which the Postnikov tower and its truncations $\Pnew$ are fibrant and thus, unlike for the levels $\Pnewst$, homotopy classes will be represented by actual maps of towers to $\Pnew$. We will not construct the model structure but give a direct proof of the representation theorem. To make this precise, for a diagram $X$ (i.e.\ a $0$-truncated tower), we specify the homotopy relation on maps of towers $X \to \Pnew$ to be the homotopy with respect to a cylinder object $\interval \times X$ (again a 0-truncated tower associated with $\interval \times X$ and where we remind our notation $I = \Delta^1$). The resulting set of relative homotopy classes will be denoted by $[X, \Pnew]^A$.

\begin{theorem}\label{t:representing_homotopy_classes}
Let $(X, A)$ be a cellular pair. Associating to a map of towers $\ell$ its $\then$-th component $\towercompmap{\ell}{\then}$ induces an isomorphism
\[[X, \Pnew]^{A} \xra\cong [\towercomp{X}{n}, \Pnewst]^{\towercomp{A}{n}} \xla\cong [X, \Pnewst]^A\]
on the sets of homotopy classes.

More precisely, given a homotopy class in $[X, \Pnewst]^A$ and a representative of its image in $[X, \Poldst]^A$ by a map of towers $X \to \Pold$ under $A$, there exists a lift $X \to \Pnew$, again a map of towers under $A$, that represents the original homotopy class.
\end{theorem}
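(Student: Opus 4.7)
The plan is to prove both the two isomorphisms and the ``more precise'' lifting property simultaneously by induction on the Postnikov stage $\then$. The key geometric input is that the structure map $\pn\colon\Pnewst \to \cofrst{\Poldst}$ is a principal $\Lnst$-bundle obtained by pulling back the fibration $\deltan\colon\Enst \to \Knst$; in particular it is itself a fibration of diagrams and has the relative homotopy lifting property against the cofibration $\towercomp{A}{\then}\to \towercomp{X}{\then}$. The base case $\then = 0$ is immediate from the adjunction $\towercomp{}{0}\dashv \extz$: a $0$-stage is essentially a single diagram, and maps (resp.\ homotopies) of towers $X\to \Pnew$ under $A$ are in bijection with maps (resp.\ homotopies) $\towercomp{X}{0}\to \towercomp{P}{0}$ under $\towercomp{A}{0}$.

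For the inductive step I establish the more precise form first. Let $\psi\colon \towercomp{X}{\then}\to\Pnewst$ represent the given class in $[X,\Pnewst]^A$ and let $\Phi\colon X\to \Pold$ be a map of towers under $A$ whose $\then$-th component $\Phi_\then\colon \towercomp{X}{\then}\to \cofrst{\Poldst}$ is homotopic to $\pn\psi$ rel $\towercomp{A}{\then}$ via some $H\colon I\times \towercomp{X}{\then}\to \cofrst{\Poldst}$. Using the relative homotopy lifting property of $\pn$, I lift $H$ (starting at $\psi$, constant on $I\times \towercomp{A}{\then}$) to a homotopy $\widetilde H\colon I\times \towercomp{X}{\then}\to \Pnewst$; let $\widetilde\Phi_\then$ denote its endpoint, so that $\pn\widetilde\Phi_\then = \Phi_\then$ and $\widetilde\Phi_\then$ is homotopic to $\psi$ rel $\towercomp{A}{\then}$, hence still represents the original class. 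Define $\widetilde\Phi\colon X\to \Pnew$ by $\widetilde\Phi_m = \Phi_m$ for $m<\then$, the chosen $\widetilde\Phi_\then$ at level $\then$, and $\widetilde\Phi_m = \cofrst{\widetilde\Phi_{m-1}}$ for $m>\then$. All tower-compatibility squares below level $\then$ or above level $\then$ are inherited directly from $\Phi$ being a map of towers and from the functoriality of $\cofrst{(\ )}$; the only nontrivial square is at level $\then$, where it reduces to the identity $\pn\widetilde\Phi_\then = \cofrst{\Phi_{\then-1}} = \Phi_\then$, which holds by construction.

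The left-hand isomorphism $[X,\Pnew]^A \cong [\towercomp{X}{\then},\Pnewst]^{\towercomp{A}{\then}}$ then follows: surjectivity by applying the inductive hypothesis to $[\pn\psi] \in [X,\Poldst]^A$ to obtain a representing map of towers $\Phi\colon X\to \Pold$ and then invoking the precise form; injectivity by applying the same lifting argument to the cellular pair $(I\times X,\ \partial I\times X\cup I\times A)$, which promotes any level-$\then$ homotopy between two given maps of towers into a genuine homotopy of towers. The right-hand isomorphism $[\towercomp{X}{\then},\Pnewst]^{\towercomp{A}{\then}}\cong [X,\Pnewst]^A$ is a formal consequence of the homotopy category definitions, since the augmentation $\towercomp{X}{\then}\to X$ is a weak equivalence between cofibrant objects (it is an iterated cofibrant replacement), and as such induces a bijection on relative homotopy classes into any target.

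The main obstacle I anticipate is purely bookkeeping: carefully tracking the interplay between the levels $<\then$, $=\then$, and $>\then$ of the tower $\Pnew$ versus $\Pold$, and ensuring that the homotopy lifting used at level $\then$ is compatible both with the fixed lower levels (which come from $\Phi$) and with the subsequent cofibrant replacements at higher levels. Once this structure is laid out, each inductive step reduces to a single, clean application of the homotopy lifting property of $\pn$.
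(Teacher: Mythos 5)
Your argument would work if the classes you start from were represented by honest maps into $\Pnewst$ and $\cofrst{\Poldst}$, but that is precisely what fails here: these diagrams are not fibrant (the paper stresses this immediately before the theorem), so an element of $[X,\Pnewst]^A$ is, by definition of the hom-set in the homotopy category, only represented by a map $\towercomp{X}{\then}\to\fibrst{\Pnewst}$ into a fibrant replacement. Your opening step, ``let $\psi\colon\towercomp{X}{\then}\to\Pnewst$ represent the given class,'' therefore assumes the representability that the theorem asserts -- it begs the question. The same issue recurs with your homotopy $H$: knowing that $\pn\psi$ and $\cofrst{\Phi_{\then-1}}$ give the same class in $[X,\Poldst]^A$ does not produce a genuine cylinder homotopy $I\times\towercomp{X}{\then}\to\cofrst{\Poldst}$ rel $\towercomp{A}{\then}$, again because $\cofrst{\Poldst}$ is not fibrant; nor does your inductive hypothesis supply one, since $\pn\psi$ need not be of the form $\cofrst{(\,\cdot\,)}$ and hence is not the top component of any map of towers $X\to\Pold$, so the hypothesis about homotopy classes of tower maps does not apply to it. (Your use of the homotopy lifting property of $\pn$ itself is legitimate -- it is a pullback of the fibration $\Enst\to\Knst$ -- and your treatment of the right-hand isomorphism via the weak equivalence $\towercomp{X}{\then}\to X$ is fine.)

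What is missing is a mechanism for passing from fibrant-replacement representatives back to $\Pnewst$ itself, and this is the actual content of the paper's proof. There one builds compatible fibrant replacements: factor $\cofrst{\Poldst}\to\fibrst{\Poldst}$ as a trivial cofibration $\cofrst{\Poldst}\to\Poldst'$ followed by a fibration, factor the Postnikov invariant $\kn$ through $\Poldst'$, and define $\fibrst{\Pnewst}$ as the pullback of $\Enst\to\Knst$ over $\Poldst'$; then $\Pnewst$ is a strict pullback of the fibration $\fibrst{\Pnewst}\to\Poldst'$ along $\cofrst{\Poldst}\to\Poldst'$. Because $\Poldst'$ is fibrant, the square formed by the representative $\towercompmap{\psi}{\then}\colon\towercomp{X}{\then}\to\fibrst{\Pnewst}$ and by $\cofrst{\varphinmo}$ commutes up to homotopy rel $\towercomp{A}{\then}$, this homotopy can be lifted through the fibration to replace $\towercompmap{\psi}{\then}$ by a strictly commuting map, and the strictified square then factors through the pullback $\Pnewst$, yielding the $\then$-th component of the desired tower map. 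So your lifting instinct is right, but it must be applied against this auxiliary fibrant tower rather than against $\pn$ directly; without constructing it, the inductive step does not go through.
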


\begin{proof}
As usual, it is sufficient to prove the existence part, since the uniqueness is simply the existence of a homotopy.

Firstly, we will construct special fibrant replacements of the Postnikov stages, $\Pnewst'$ of $\cofrst{\Pnewst}$ and $\fibrst{\Pnewst}$ of $\Pnewst$, fitting into the commutative diagram
\[\xymatrix{
\Pnewst \ar[r]^-\sim \ar[d] & \fibrst{\Pnewst} \ar[d] \\
\cofrst{\Poldst} \ar@{ >->}[r]^-\sim \ar[d] & \Poldst' \ar[d] \\
\Poldst \ar[r]_-\sim & \fibrst{\Poldst}
}\]
Proceeding inductively, we let $\Pzerost \xra{\sim} \fibrst{\Pzerost}$ be a fibrant replacement of $\Pzerost$, e.g.\ we can take the identity. In the inductive step, factor the composition $\cofrst{\Poldst} \xra\sim \Poldst \xra\sim \fibrst{\Poldst}$ into a trivial cofibration followed by a (necessarily trivial) fibration,
\[\cofrst{\Poldst} \cof[\sim] \Poldst' \fib \fibrst{\Poldst}.\]
This ensures that $\Poldst'$ is indeed fibrant, since it admits a fibration to a fibrant $\fibrst{\Poldst}$. Using that $\Knst$ is fibrant, we obtain a factorization of the Postnikov invariant $\kn$,
\[\kn \colon \cofrst{\Poldst} \cof[\sim] \Poldst' \to \Knst.\]
Now we take the pullbacks $\fibrst{\Pnewst}$ and $\Pnewst$ of the Eilenberg--MacLane fibration along the above factorization of the Postnikov invariant $\kn$:
\[\xymatrix{
	\Pnewst \ar[r]^-\sim \ar@{->>}[d] \pb & \fibrst{\Pnewst} \ar[r] \ar@{->>}[d] \pb & \Enst \ar@{->>}[d] \\
	\cofrst{\Poldst} \ar@{ >->}[r]^-\sim & \Poldst' \ar[r] & \Knst
}\]
This ensures that $\fibrst{\Pnewst}$ is indeed fibrant, since it admits a fibration to a fibrant $\Poldst'$.

%

We are now ready to prove the proposition. Let a homotopy class in $[X,\Pnewst]^A$ be represented by a map $\towercompmap{\psi}{\then} \colon \towercomp{X}{n} \to \fibrst{\Pnewst}$ under $\towercomp{A}{n}$. Let the image of this homotopy class in $[X,\Poldst]^A$ be represented by a map of towers $\ell \colon X \to \Pold$ under $A$ and consider the cofibrant replacement of its top component:
\[\towercomp{X}{n} = \cofrst{\towercomp{X}{n-1}} \xlra{\cofrst{\ellnmo}} \cofrst{\Poldst}.\]
Then the outer square in
\[\xymatrix{
	\towercomp{X}{n} \ar@/^1pc/[drr]^-{\towercompmap{\psi}{\then}} \ar@/_1pc/[ddr]_-{\cofrst{\ellnmo}} \ar@{-->}[dr] \\
	& \Pnewst \ar[r] \ar[d] \pb & \fibrst{\Pnewst} \ar@{->>}[d] \\
	& \cofrst{\Poldst} \ar[r] & \Poldst'
}\]
commutes up to homotopy under $\towercomp{A}{n}$, which enables us to replace the map $\towercompmap{\psi}{\then}$ by a map, homotopic under $\towercomp{A}{n}$, for which this square commutes strictly. Thus, it induces a map $\towercompmap{\ell}{\then} \colon \towercomp{X}{n} \to \Pnewst$ under $\towercomp{A}{n}$, and, together with the given $\ellm$, $\them < \then$, a map of towers $X \to \Pnew$, as desired.
\end{proof}

Putting these two theorems together, we observe that elements of $[X, Y]^A$ are represented by maps of towers $X \to \Pnew$ up to homotopy relative to $A$. It remains to relate $[X,\Pnew]^A$ with $[X, \Pold]^A$ to enable inductive computation.

For the principal $\Ln$-bundle $\Pnew \to \Pold$, we will derive an exact sequence of homotopy classes that involves also $[X, \Ln]^A$ and $[X, \Kn]^A$. Since we represent homotopy classes of maps to Postnikov stages by maps of towers, it will be convenient to do the same for maps into Eilenberg-MacLane diagrams:

\begin{lemma}\label{l:representing_homotopy_classes_EM}
Let $(X, A)$ be a cellular pair. There is an isomorphism
\[[X, \Ln]^A \cong [X, \Lnst]^A\]
and the homotopy classes are represented both by maps $X \to \Lnst$ of diagrams under $A$ and by maps $X \to \Ln$ of towers under $A$.
\end{lemma}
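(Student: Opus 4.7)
The plan is to exploit the adjunction $\towercomp{}{\then} \dashv \susp{}{\then}$ between $\Tow$ and $\Diag$ established earlier. Since by definition $\Ln = \susp{\Lnst}{\then}$, this adjunction gives a natural bijection
\[\Hom_{\Tow}(X, \Ln) \cong \Hom_{\Diag}(\towercomp{X}{\then}, \Lnst),\]
and its relative version for maps under $A$ is obtained in the same way, after viewing $A$ as the $0$-truncated tower $\extz A$ whose $\then$-th level is the iterated cofibrant replacement $\towercomp{A}{\then}$.

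Next I would pass from hom-sets to homotopy classes. The homotopy relation on tower maps $X \to \Ln$ is defined through the cylinder $I \times X$ regarded again as a $0$-truncated tower. Lemma~\ref{l:cofibrant_replacement_tensors} asserts that cofibrant replacement commutes with $\sSet$-tensors, whence $\towercomp{(I \times X)}{\then} \cong I \times \towercomp{X}{\then}$; under the adjunction, tower-level homotopies therefore correspond precisely to ordinary diagram-level homotopies, yielding
\[[X, \Ln]^A \cong [\towercomp{X}{\then}, \Lnst]^{\towercomp{A}{\then}}.\]

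Finally, the iterated cofibrant replacement $\towercomp{X}{\then} \to X$ is a weak equivalence between objects cofibrant relative to $A$, and $\Lnst$ is fibrant since its objectwise values are simplicial groups. Standard homotopical invariance then gives $[\towercomp{X}{\then}, \Lnst]^{\towercomp{A}{\then}} \cong [X, \Lnst]^A$, completing the chain of isomorphisms. Both representation claims in the lemma then follow at once: diagram maps $X \to \Lnst$ under $A$ represent classes in $[X, \Lnst]^A$ directly (as $(X,A)$ is cofibrant and $\Lnst$ is fibrant), while tower maps $X \to \Ln$ under $A$ represent classes in $[X, \Ln]^A$ by definition, and the two are matched through the above isomorphisms. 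The one genuinely non-formal point in this plan is the compatibility of cylinders under the adjunction, which is exactly what Lemma~\ref{l:cofibrant_replacement_tensors} was prepared in advance to supply; everything else is formal manipulation of the adjunction and of the model structure on $\Diag$.
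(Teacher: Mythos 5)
Your proposal is correct and follows essentially the same route as the paper: the adjunction $\towercomp{}{\then} \dashv \susp{}{\then}$ gives $[X,\Ln]^A \cong [\towercomp{X}{\then},\Lnst]^{\towercomp{A}{\then}}$, homotopy invariance of maps into the fibrant $\Lnst$ identifies this with $[X,\Lnst]^A$, and fibrancy yields the representability claims. Your explicit check that cylinders are compatible under the adjunction via Lemma~\ref{l:cofibrant_replacement_tensors} is a detail the paper leaves implicit, but it is the same argument.
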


\begin{proof}
The representability on the level of diagrams follows from $\Lnst$ being fibrant. Then, by adjunction, we get the first isomorphism in
\[[X, \Ln]^A = [X, \susp{\Lnst}{\then}]^A \cong [\towercomp{X}{\then}, \Lnst]^{\towercomp{A}{\then}} \cong [X, \Lnst]^A,\]
the second follows from homotopy invariance.
\end{proof}

\subsection{Exact sequences}
\label{sec:exact_sequence}

We derive a general ``exact sequence'' that relates the sets of homotopy classes of maps to consecutive stages of a Postnikov tower and that does not depend on the choices of basepoints. As was explained in Section~\ref{s:idea_proof}, the action of $[X, \Lnst]^A$ on $[X, \Pnewst]^A$ has possibly non-trivial stabilizers and set of orbits possibly a proper subset of $[X, \Poldst]^A$ and the exact sequence captures both the stabilizers and the subset.

A \emph{sequence} is a diagram of the following shape
\begin{equation} \label{eq:abstract_exact_sequence}
\setone_\bullet \xlra{\connhom_\bullet} \settwo \acts \setthree \xlra{s} \setfour \xlra{t} \setfive
\end{equation}
where $\setthree$, $\setfour$ are sets, $\setfive$ a pointed set with basepoint $0 \in \setfive$, $\settwo$ a group and $\setone$ a collection of groups $\setone_\eltfour$ indexed by $\eltfour \in \setfour$. The maps $s$ and $t$ are maps of sets, the arrow at $\setthree$ denotes an action of $\settwo$ on $\setthree$  and $\connhom_\bullet$ is a collection of group homomorphisms $\connhom_\eltthree \colon \setone_{s(\eltthree)} \to \settwo$ indexed by $\eltthree \in \setthree$.

\begin{remark}
In fact, the groups $\setone_\bullet$ and group homomorphsisms $\connhom_\bullet$ will not be indexed by elements of $\setthree$ and $\setfour$, but rather by elements of some bigger sets $\calsetthree$ and $\calsetfour$ that surject onto $\setthree$ and $\setfour$. Mathematically, this does not change anything, since the image of $\connhom_\bullet$ does not depend on the representative in $\calsetthree$ and this will be the main object, by the following definition.
\end{remark}

\begin{definition} \label{d:exactness}
We say that the above sequence is \emph{exact} if
\begin{itemize}[labelindent=.5em,leftmargin=*,label=$\bullet$,itemsep=0pt,parsep=0pt,topsep=0pt]
\item
	$t^{-1}(0) = \im s$,
\item
	$s(\eltthree) = s(\eltthree')$ if and only if $\eltthree$, $\eltthree'$ lie in the same orbit of the $\settwo$-action, i.e.\ $\eltthree + \elttwo = \eltthree'$ for some $\elttwo \in \settwo$, and
\item
	the stabilizer of $d \in \setthree$ is exactly the image of $\connhom_\eltthree$.
\end{itemize}
\end{definition}

We may construct out of this sequence an ordinary exact sequence of pointed sets in the following way: choose a basepoint $\eltthree \in \setthree$ and then consider
\[\setone_{s(\eltthree)} \xlra{\connhom_\eltthree} \settwo \xlra{\actmap} \setthree \xlra{s} \setfour \xlra{t} \setfive\]
with $\actmap(\elttwo) = \eltthree + \elttwo$, the action of $\settwo$ on the fixed element $\eltthree$. It is easily seen to be really exact, where $\setone_{s(\eltthree)}$ and $\settwo$ are equipped with the respective zeroes as basepoints, $\setthree$ with basepoint $\eltthree$, $\setfour$ with basepoint $s(\eltthree)$ and $\setfive$ with the given element $0 \in \setfive$.

%
%

\subsection*{Exact sequence relating consecutive stages}

By composing $\alpha \colon A \to Y$ with various maps in the Postnikov tower of $Y$, we make all
\[\Pnew, \Pold, \En, \Kn\]
into towers under $A$, i.e.\ objects of $A/\Tow$. Further, $\Ln$ is considered as a tower under $A$ via the constant map onto the zero of $\Ln$; more precisely, the constant map onto the zero $\towercomp{A}{\then} \to \Lnst$ is adjoint to the required $A \to \Ln$. For the purpose of the description of the exact sequence, we will denote maps $X \to \Pold$ by $\ellnmo$, $\ellnmo'$, etc.\ and maps $X \to \Pnew$ by $\elln$, $\elln'$, etc.

Our main exact sequence is
\begin{equation}\label{eq:general_exact_sequence}
[I \times X,\Pold]^{\partial}_\bullet \xlra{\connhom_\bullet} [X,\Ln]^A \acts [X,\Pnew]^A \xlra{\pnst} [X,\Pold]^A \xlra{\knst} [X,\Kn]^A,
\end{equation}
whose only non-trivial object is the collection of groups
\[[I \times X,\Pold]^{\partial}_\bullet := [I \times X,\Pold]^{(\partial I \times X) \cup (I \times A)}_\bullet,\]
indexed by $\eltfour \in [X, \Pold]^A$, where for each such $\eltfour = [\ellnmo]$, the corresponding group $[I \times X,\Pold]^{\partial}_e$ is the group of homotopy classes fixed on each copy of $X$ by $\ellnmo$ and on $I \times A$ by the constant homotopy at the given map $A \to \Pold$. (As explained in the above remark, this collection is indexed by actual maps $\ellnmo \colon X \to \Pold$ rather than the homotopy classes $[\ellnmo]$; this will be important later in the computational part.) The element $0 \in [X, \Kn]^A$ (the basepoint) is the only homotopy class in the image of $\deltanst \colon [X, \En]^A \to [X, \Kn]^A$ (since $\En$ is contractible, there is a unique homotopy class $X \to \En$, see Lemma~\ref{lem:En_contractible} for a more precise statement and proof).

The maps $\pnst$ and $\knst$ are induced by $\pn$ and $\kn$, respectively. The action is also induced by the action of $\Ln$ on $\Pnew$. It remains to describe the homomorphisms
\[\connhom_{[\elln]} \colon [I \times X,\Pold]^{\partial}_{[\ellnmo]} \to [X, \Ln]^A,\]
where $\ellnmo =\pnst(\elln)$. Starting with a homotopy $h \colon I \times X\ra\Pold$ as above, lift it to a homotopy $\widetilde h \colon I\times X\ra\Pnew$ starting at $\elln$ and relative to $A$. The restriction $\widetilde h|_{\vertex1\times X}$ is then of the form $\elln + \zeta$ for a unique $\zeta \colon X \to \Ln$ (namely, $\zeta$ is the difference $\widetilde h|_{1 \times X} - \elln$) and we set $\connhom_{[\elln]}[h] = [\zeta]$.

\begin{proposition} \label{prop:connecting_homomorphism}
The above is a well defined exact sequence.
\end{proposition}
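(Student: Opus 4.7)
The proof breaks into two independent tasks: \textbf{(A)} verifying that the connecting homomorphisms $\connhom_\bullet$ are well defined (and indeed homomorphisms), and \textbf{(B)} verifying the three axioms of Definition~\ref{d:exactness}. Throughout I will represent homotopy classes by honest maps of towers, appealing to Theorem~\ref{t:representing_homotopy_classes} and Lemma~\ref{l:representing_homotopy_classes_EM}. The essential input will be that $\pn \colon \Pnew \to \Pold$, being the pullback of the Kan fibration $\deltan \colon \En \to \Kn$, has the homotopy lifting property for the cofibration $(\{0\}\times X)\cup(I\times A) \hookrightarrow I\times X$; this is a pointwise statement at each level of the tower, so it holds in $A/\Tow$.

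For \textbf{(A)}, given $h \in [I\times X,\Pold]^{\partial}_{[\ellnmo]}$ and a map $\elln\colon X\to\Pnew$ with $\pn\elln = \ellnmo$, the lift $\widetilde h$ exists by the above fibration property, with prescribed initial condition $\widetilde h|_{\{0\}\times X} = \elln$ and the constant homotopy on $I\times A$. If $\widetilde h'$ is another lift of the same $h$, then $\widetilde h' - \widetilde h \colon I\times X \to \Ln$ vanishes on $(\{0\}\times X)\cup(I\times A)$ and hence restricts on $\{1\}\times X$ to a map nullhomotopic relative to $A$; so $\widetilde h|_{1} - \elln$ and $\widetilde h'|_{1} - \elln$ agree in $[X,\Ln]^A$. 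Independence of the chosen representative of $[h]$ is proved by the same lifting argument applied to a homotopy $I\times I\times X\to\Pold$. The homomorphism property under concatenation follows from the observation that if $\widetilde h_2$ lifts $h_2$ starting at $\elln$ and ends at $\elln + \zeta_2$, then the translate $\widetilde h_2 + \zeta_1$ lifts $h_2$ starting at $\elln + \zeta_1$ and ends at $\elln + \zeta_1 + \zeta_2$; concatenating with a lift of $h_1$ exhibits $\connhom[h_1\cdot h_2]=\zeta_1+\zeta_2$.

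For \textbf{(B)}, \emph{exactness at $[X,\Pold]^A$}: if $[\ellnmo]=\pnst[\elln]$, then $\kn\ellnmo = \deltan\qn\elln$ factors through the contractible $\En$ (Lemma~\ref{lem:En_contractible}), hence is nullhomotopic. Conversely, a nullhomotopy of $\kn\ellnmo$ together with the contractibility of $\En$ yields, via the pullback description $\Pnew = \Pold \times_{\Kn} \En$ from~\ref{ei:pullback}, a lift $\elln$ of $\ellnmo$, giving $[\ellnmo]\in\im\pnst$. \emph{Orbit axiom}: obviously $\pn(\elln+\zeta)=\pn\elln$. Conversely, if $\pnst[\elln] = \pnst[\elln']$, pick a homotopy $h\colon\pn\elln\sim\pn\elln'$ rel $A$ and lift it to $\widetilde h$ starting at $\elln$. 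Its endpoint $\widetilde h|_1$ and $\elln'$ project to the same map in $\Pold$, so simple transitivity of the $\Ln$-action supplies a $\zeta\colon X\to\Ln$ with $\elln' = \widetilde h|_1 + \zeta = \elln + (\widetilde h|_1-\elln+\zeta)$, placing $[\elln']$ in the orbit of $[\elln]$. \emph{Stabilizer axiom}: if $\zeta=\connhom_{[\elln]}[h]$, the lift $\widetilde h$ is itself a homotopy rel $A$ from $\elln$ to $\elln+\zeta$, so $[\elln+\zeta]=[\elln]$; conversely any such homotopy $\widetilde h$ projects to an element $h = \pn\widetilde h \in [I\times X,\Pold]^\partial_{[\ellnmo]}$ whose connecting value is $\widetilde h|_1 - \elln = \zeta$.

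The main obstacle is purely technical: ensuring the homotopy lifting property for $\pn$ in the category of towers under $A$. Once one knows that each structure map in a Postnikov stage is built as the pullback of a Kan fibration of Eilenberg--MacLane diagrams, which is preserved levelwise by $\susp{}{\then}$ and by pullback, everything else is a direct application of the bundle structure maps $({+},{-})$ together with contractibility of $\En$.
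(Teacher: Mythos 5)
Your proof is correct and takes essentially the same route as the paper: the paper's own proof is a one-line citation of \cite{FilVokri}, noting that the argument there applies to any principal bundle with the homotopy lifting property for the pair $(X,A)$, and the ingredients you spell out (HLP of $\pn$, simple transitivity of the $\Ln$-action, contractibility of $\En$, plus the pullback description of $\Pnew$) are exactly that argument written out. The only blemish is the ill-typed expression $\elln + (\widetilde h|_1 - \elln + \zeta)$ in your orbit axiom ($\widetilde h|_1$ and $\elln$ do not lie over the same map to $\Pold$, so their difference in $\Ln$ is undefined), but the intended conclusion $[\elln'] = [\widetilde h|_1] + [\zeta] = [\elln] + [\zeta]$ already follows from the preceding sentence, since $\widetilde h$ is a homotopy rel $A$ from $\elln$ to $\widetilde h|_1$.
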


\begin{proof}
The proof in \cite[Section~5]{FilVokri} applies to any principal bundle with a homotopy lifting property for the pair $(X, A)$, such as $\Ln \to \Pnew \to \Pold$.
\end{proof}

\subsection{Heaps} \label{sec:heaps}

In the stable situation $\dim X \leq 2 \operatorname{conn} Y$, the set $[X, Y]^A$ is actually an Abelian heap (this is proved later in Theorem~\ref{thm:stable_situation_heaps}) and we will exploit this structure for the computations. We start with a formal definition of a heap. Intuitively, a heap is a group without a definite choice of zero, so that one has addition with respect to an arbitrary zero.

\begin{sdefinition}
A \emph{heap} is a set $S$ with a ternary operation, denoted by $x +_p y$ in this paper, that satisfies the identity law
\[x +_p p = x = p +_p x\]
together with a ``partial para-associative law,'' or just associative law,
\[(x +_p y) +_q z = x +_p (y +_q z).\]
It is said to be \emph{Abelian} if
\[x +_p y = y +_p x.\]
\end{sdefinition}

Given $p \in S$, we obtain a group structure on $S$ with zero $p$, addition given by $x + y = x +_p y$ and inverse $-x = p +_x p$; we denote this group by $S_p$. It is Abelian if and only if the heap $S$ is Abelian. A different choice of the zero element leads to an isomorphic group, the isomorphism being the translation map $S_p \to S_q$, $x \mapsto x +_p q$. We will not work with heaps directly, but rather we will choose a zero and work with the induced group.

\subsection{Exact sequences of heaps}

Thus, if an exact sequence in the sense of Definition~\ref{d:exactness} consists of Abelian heaps and heap homomorphisms, by choosing basepoints, we obtain an ordinary exact sequence of Abelian groups. Since computations with exact sequences of Abelian groups (and known homomorphisms) are possible, this finishes our mathematical description of the computation of $[X,\Pnew]^A$, once we explain how this is an Abelian heap.

\subsection{Stability and Abelian heaps}


The (unreduced) suspension $\Sigma Y$ of a diagram $Y$ is the quotient of $\Delta^1 \times Y$ under the identification that squashes each of $0 \times Y$ and $1 \times Y$ separately to a point, i.e.\ it is the diagram of unreduced suspensions.

\begin{theorem} \label{thm:stable_situation_heaps}
Let $(X, A)$ be a cellular pair. When $\dim X \leq 2 \operatorname{conn} Y$, there is a bijection
\[[X, Y]^A \cong [\Sigma X, \Sigma Y]^{\Sigma A}\]
and the set on the right admits a canonical structure of an Abelian heap.
\end{theorem}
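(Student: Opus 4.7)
The proof splits naturally into establishing the bijection and then endowing the right-hand side with an abelian heap structure. For the bijection, the plan is to invoke a Freudenthal-type suspension theorem for diagrams in the relative setting. Pointwise, the classical Freudenthal theorem gives that the canonical map $Y(i) \to \Omega\Sigma Y(i)$ is $(2\conn Y(i) + 1)$-connected for each $i \in \mcI$; taking the minimum of the connectivities shows that the induced map of diagrams is pointwise at least $(2\conn Y + 1)$-connected. Combined with obstruction theory on the Postnikov tower of $\Omega\Sigma Y$ relative to $Y$ (the sort of Bredon-cohomological obstruction calculus developed later in Section~6), the induced map $[X, Y]^A \to [X, \Omega\Sigma Y]^A$ is then a bijection whenever $\dim X \leq 2\conn Y$, since all obstructions live in Bredon cohomology groups of $(X, A)$ with coefficients concentrated in dimensions $> \dim X$ and therefore vanish. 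The suitable adjunction for the unreduced suspension identifies $[X, \Omega\Sigma Y]^A$ with $[\Sigma X, \Sigma Y]^{\Sigma A}$, yielding the claimed bijection.

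For the abelian heap structure I proceed in two substeps. First, the relative unreduced suspension $\Sigma X$ (under $\Sigma A$) carries a co-group structure via the pinch map that collapses $\{1/2\} \times X \subseteq I \times X$ onto the image of $\Sigma A$, together with the interval-reversal that serves as co-inversion; for any target $Z$ under $\Sigma A$, this equips $[\Sigma X, Z]^{\Sigma A}$ with a natural group structure once a basepoint map is fixed, and the natural isomorphisms between these groups for different basepoint choices assemble into an abelian heap ternary operation. Second, to see abelianness, I apply the bijection from the first part once more. This is legitimate because $\dim \Sigma X = \dim X + 1 \leq 2\conn Y + 1 \leq 2\conn \Sigma Y$, so the bijection gives $[\Sigma X, \Sigma Y]^{\Sigma A} \cong [\Sigma^2 X, \Sigma^2 Y]^{\Sigma^2 A}$. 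On the right, the double suspension $\Sigma^2 X$ carries two commuting co-group structures, one from each suspension coordinate, so by the Eckmann--Hilton argument the associated group operation is commutative; transporting back along the bijection gives commutativity of the heap operation on $[\Sigma X, \Sigma Y]^{\Sigma A}$. To make the heap structure canonical, one checks that the bijection obtained via the first part is compatible with the pinch-induced co-group structures, which is routine since suspension is functorial and commutes with the relevant pushouts.

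The main obstacle is the Freudenthal step for diagrams in the relative unreduced setting. It leans on diagrammatic obstruction theory that is only developed later in the paper, so the argument either needs a forward reference or a self-contained cellular induction on the cells of the pair $(X, A)$ that reduces, cell by cell, to the pointwise classical Freudenthal theorem at the object of $\mcI$ carrying the cell. The two cone points of the unreduced suspension and the compatibility across the diagram require some careful bookkeeping, but no new homotopy-theoretic input beyond what the stability assumption $\dim X \leq 2\conn Y$ provides.
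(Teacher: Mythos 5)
Your bijection step is workable in outline but imprecise at a point that matters in this unpointed, relative setting: the suspension in the statement is unreduced, so there is no adjunction identifying $[X,\Omega\Sigma Y]^A$ with $[\Sigma X,\Sigma Y]^{\Sigma A}$, and the canonical map $Y\to\Omega\Sigma Y$ is not even defined without a basepoint. What one actually uses is that maps $\Sigma X\to\Sigma Y$ under $\Sigma A$ correspond to maps from $X$ to the diagram of paths in (a pointwise fibrant replacement of) $\Sigma Y$ joining the two cone points, with the under-$A$ structure given by the adjoint of $\Sigma f$; the comparison map from $Y$ to this path diagram is pointwise $(2\conn Y+1)$-connected by Freudenthal, and a cellular induction over a cofibrant replacement of $(X,A)$ in the projective model structure then gives the bijection for $\dim X\leq 2\conn Y$. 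This is repairable bookkeeping, and it is essentially what the result quoted by the paper does: the paper itself does not reprove the theorem but invokes \cite[Theorem~1.1]{Vokrinek-algoheaps} applied to the model category $\mcM=A/\Diag$, which packages both the bijection and the heap structure.

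The genuine gap is in your construction of the algebraic structure. The pinch map you describe --- collapsing $\{1/2\}\times X$ ``onto the image of $\Sigma A$'' --- does not exist: there is no map $X\to A$ to collapse along, and collapsing the equator to a point is incompatible with the relative condition, because the restriction to $\Sigma A$ of a map factored through such a pinch would be a concatenation of $\Sigma f$ with itself rather than $\Sigma f$ on the nose. More fundamentally, the unreduced suspension in the unpointed/relative setting is not a cogroup object even up to homotopy; it carries only a ternary (co-heap) co-operation, which is exactly why the statement is about heaps rather than groups. The paper realizes this operation on homotopy classes (Theorem~\ref{thm:weakly_locally_effective_heap_structure}) by placing three maps $\ell_1,o,\ell_2$ on three faces of $\stdsimp{3}\times X$, extending over $\stdsimp{3}\times X$ relative to a degenerate prescription on $\stdsimp{3}\times A$, and restricting to $\face{03}\times X$ --- an operation that is strictly fixed on $\Sigma A$, unlike any binary pinch. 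Your Eckmann--Hilton step for abelianness is the right idea (and matches the double-suspension argument behind \cite{Vokrinek-algoheaps}), but it must be run for two compatible heap operations rather than two cogroup structures. Replace the binary pinch by the ternary concatenation construction and the rest of your outline can be salvaged.
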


\begin{proof}
This is essentially contained in the proof of \cite[Theorem~1.1]{Vokrinek-algoheaps} applied to the category $\mcM = A / \Diag$ of diagrams under $A$. More precisely, it is proved in that theorem that $[X, Y]^A$ is isomorphic to $[\Sigma X, \Sigma Y]^{\partial I\!\!I}$, where $\partial I\!\!I$ is the suspension of the initial object, i.e.\ of $A$, and is thus $\Sigma A$. Both $\Sigma X$ and $\Sigma Y$ are made into diagrams under $\Sigma A$ in an obvious way by suspending the given maps $A \to X$ and $A \to Y$.

The second statement is a part of \cite[Theorem~1.1]{Vokrinek-algoheaps} and will be explained in greater detail in the proof of Theorem~\ref{thm:weakly_locally_effective_heap_structure}.
\end{proof}

\section{Algorithmic structures on mathematical objects}\label{s:Algorithmic_structures}

In this section, we deal with algorithmic aspects of the mathematical objects treated in the previous section. First we present our point of view on computations in/with an object like a simplicial set $X$ and introduce various levels of its computability -- (weakly) locally effective, effective (and later also homologically effective). As the running time analysis of computations of invariants like $[X,Y]^A$ for a \emph{single} instance makes little sense, we will have to deal, at least implicitly, with \emph{families} of inputs for this purpose and this introduces a further layer of complexity into the picture. For this reason, we postpone this undertaking to the very end of this section.

\subsection{Computations in objects vs computations with objects}

We would like to point out a qualitative difference between two computational problems concerning a simplicial set. The first task is to compute the $j$-th face or degeneracy of a given simplex and the second is to compute the $n$-th homology group.

The first problem can quite often be tackled without exact knowledge of the simplicial set in question, e.g.\ it is computed in exactly the same way in a space and in any of its subspaces; thus, it only concerns a ``neighbourhood'' of the given simplex and that is why we call it local. If all such local computations are available (in this case, faces and degeneracies), we call a simplicial set \emph{locally effective}; we give a precise definition later. It is simple to give a similar definition for any algebraic structure -- all operations should be computable, e.g.\ addition, zero and inverse in a locally effective Abelian group etc. In general, we speak of locally effective objects.

On the other hand, the second problem of computing $H_n$ concerns the whole simplicial set and is thus global. Provided that $X$ is locally effective and that we are given a list of all its (non-degenerate) simplices, we call $X$ \emph{effective} and for such $X$ it is possible to compute basically anything, including the homology groups. This should be viewed as the strongest version of (global) effectiveness of $X$.

A general definition of an effective object has the following scheme: We declare certain locally effective objects to be ``standard effective objects'', e.g.\ in the case of Abelian groups these are the products of cyclic groups, and in the model categorical cases these are the cell complexes whose elements are represented uniquely using cells as in Propositions~\ref{prop:cellular_diagrams_spaces} and~\ref{prop:cellular_diagrams_complexes}. Then a general locally effective object is effective if there is provided a computable isomorphism with a standard effective object.

In the last part of the paper, we will use heavily \emph{homologically effective chain complexes} -- these are generalizations of effective chain complexes, where an isomorphism with a standard effective object is replaced by a chain homotopy equivalence.

\subsection{Weak local effectiveness}

There is one more issue that was not apparent for simplicial sets. Our main object is $[X, Y]^A$, the set of homotopy \emph{classes} of maps and our algorithms naturally work with actual maps, i.e.\ \emph{non-unique} representatives of these homotopy classes. We also mention a much simpler example of the group $\bbZ/n$ where, for example, on the level of representatives the addition is the usual addition of integers, regardless of $n$, and is thus no different from~$\bbZ$. Of course, one can make representatives unique if one chooses a set of preferred representatives and output only these in all algorithms; this may sound natural for $\bbZ/n$ but there are no obvious preferred representatives in $[X, Y]^A$ so that the non-unique representation is unavoidable. When speaking about local computations, we were silently assuming that representatives were unique, in which case there is an easy way of distinguishing $\bbZ/n$ from~$\bbZ$: pick any non-zero element (assume it is given), multiply it by $n$ and check whether the result is zero. The same works with non-unique representatives if equality is decidable, i.e.\ if there is provided an algorithm testing whether two inputs represent the same element. If this is the case and all local computations are available, we still call the object locally effective.

Thus, in order to make $[X, Y]^A$ locally effective (even as a set), we would require an algorithm testing whether two maps are homotopic. In fact, such an algorithm exists even non-stably (by the methods of~\cite{FilVokri}) but is not needed in this paper. We will thus also work with structures where local computations are possible but equality is not necessarily decidable. We call such structures \emph{weakly locally effective}. (Previously, these were called semi-effective, but since they are weaker than locally effective ones, we decided to change the name.)

We remark that there will be no weakly effective objects, since equality will be decidable in all our standard effective objects and, for effective objects, one can transfer the equality problem along the given isomorphism to a standard effective object.

\subsection{Preview on running times}

Given that an effective Abelian group is a collection of algorithms we would like to clarify on claims concerning the running time of computing $[X, Y]^A$ or, more precisely, the running time of the algorithm giving the isomorphism type of this Abelian group. Practically, and in accordance with our partial implementation within the framework of object-oriented programming, such a computation usually splits into the ``construction'' of the object $[X, Y]^A$ itself (in OOP terms the call of the constructor) and the call of the responsible function (in OOP terms the call of the ``method''); what matters, of course, is the total running time.

We do not specify how to split the computation. One of the options is the lazy implementation where nothing is computed before it is needed. In this extreme case, the construction running time is zero and the running time of the method is the only relevant part. However, in this approach, any required data involving any intermediate step is computed repeatedly from a scratch, and so this does not prescribe a very practical algorithm. In the opposite extreme, the isomorphism type etc.\ can be computed upon the construction and outputting it via the method then takes very little time. To make our running time analysis simpler, and only for this reason, we will be assuming the lazy implementation, so that the construction time does not enter the analysis.\footnote{%
A more detailed analysis allowing non-lazy implementations was developed in \cite{polypost} and further in~\cite{aslep}. Following this formalism, one can show that even the non-lazy implementation runs in polynomial time, although we believe that the reader should consider it rather clear that its running time is not greater than that of the lazy implementation.
}
Thus, our way to prove the polynomiality claim will be to show recursively that all the algorithms comprising any involved computational object (e.g.\ the Postnikov stage $\Pnew$) run in polynomial time \emph{provided} that the same is true for all algorithms of all objects used inside this object (e.g.\ the previous Postnikov stage $\Pold$). We will elaborate on this at the end of this section, but it might be helpful to have this goal in mind already now.

\subsection{Weakly locally effective sets}

Let $A$ be a set. We say $A$ is \emph{weakly locally effective} if there is given a set $\mcA$ and a surjective map $\mcA \to A$ (a \emph{weakly locally effective} representation), denoted $\alpha \mapsto [\alpha]$, in such a way that elements of $\mcA$ have a specified representation in a computer (for definiteness, we might assume that elements of $\mcA$ are actual bit strings, but we will not go into such details). A mapping $f \colon A \to B$ between weakly locally effective sets is said to be \emph{computable} if there is given an algorithm that computes a mapping $\varphi \colon \mcA \to \mcB$ that represents $f$, i.e.\ such that $f([\alpha]) = [\varphi(\alpha)]$.

\subsection{Locally effective sets}

We say that the representation of $A$ is \emph{locally effective} if there is provided an algorithm that, given $\alpha, \beta \in \mcA$, decides whether $[\alpha] = [\beta]$.

One of the possibilities, occuring frequently in this paper, is that the representation map $\mcA \to A$ is bijective, i.e.\ that any element of $A$ has a unique representative.

\subsection{(Weakly) locally effective surjections}

Before explaining the algebraic examples, we mention a general principle in the computational world: existence should be replaced by computability. This will not be of much concern to us, since algebraic structures are defined by equalities, but when dealing with exactness, surjectivity is crucial. In ordinary mathematics, a mapping $f \colon A \to B$ is surjective if
\[\forall b \in B \  \exists a \in A \colon f(a) = b.\]
In the computational world, we thus require an algorithm computing, for any $b \in B$, some preimage $a \in f^{-1}(b)$. In addition, in the weakly locally effective setup, this is handled on the level of representatives, so that the algorithm computes, for any representative $\beta \in \mcB$, a representative $\alpha \in \mcA$ of its preimage, i.e.\ $f([\alpha]) = [\beta]$. We remark, that this computable mapping does not, in general, prescribe a mapping $B \to A$, i.e.\ it may happen that $[\beta] = [\beta']$, while for the computed preimages $[\alpha] \neq [\alpha']$. For this reason, the computable mapping $\mcB \to \mcA$ will be called a \emph{weak section} of $f \colon A \to B$.

To summarize, we may say that $f$ is a surjection in the effective setting, if it admits a computable weak section.

\subsection{(Weakly) locally effective algebraic structures}

An algebra is a collection of sets and operations among them satisfying certain identities (i.e.\ an object of some variety of multi-sorted algebras). We then say that it is (weakly) locally effective if all the involved sets are (weakly) locally effective and if all operations are computable. We will now give a detailed definition for the structures used in this paper.

\subsection{Abelian groups}\label{sec:effabgrp}

A \emph{weakly locally effective} Abelian group $A$ is a weakly locally effective set for which the zero, addition and inverse are computable. In more detail we can compute $o\in \mathcal{A}$ such that $[o] = 0$, given any $\alpha, \beta \in \mathcal{A}$ we can compute $\gamma \in \mathcal{A}$ such that $[\gamma] = [\alpha] + [\beta]$ and given any $\alpha \in \mcA$ we can compute $\beta \in \mcA$ such that $[\beta] = - [\alpha]$,

A weakly locally effective Abelian group is $A$ \emph{effective} if there is given an isomorphism $A\cong\bbZ/q_1\oplus\cdots\oplus\bbZ/q_r$, computable together with its inverse. In detail, this consists of
\begin{itemize}
\item
an algorithm that outputs a finite list of generators $a_1,\ldots,a_r$ of $A$ (given by representatives) and their orders $q_1,\ldots,q_r\in\{2,3,\ldots\}\cup\{0\}$ (where $q_i=0$ gives $\bbZ/q_i=\bbZ$),
\item
an algorithm that, given $\alpha\in\mcA$, computes integers $z_1,\ldots,z_r$ so that $[\alpha]=\sum_{i=1}^rz_i a_i$; each coefficient $z_i$ is unique within $\bbZ/q_i$.
\end{itemize}

We will utilize the following lemmas; they were originally given in \cite{post}.

\begin{lemma}[kernel and cokernel]\label{l:ker_coker}
Let $f\colon A\to B$ be a computable homomorphism
of effective Abelian groups.
Then both $\ker(f)$ and $\coker(f)$ can be represented as effective Abelian groups.
\end{lemma}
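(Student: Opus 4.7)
The plan is to reduce everything to integer linear algebra via Smith normal form. Using the given effective representations $A \cong \bbZ/q_1 \oplus \cdots \oplus \bbZ/q_r$ with generators $a_1, \ldots, a_r$ and $B \cong \bbZ/p_1 \oplus \cdots \oplus \bbZ/p_s$ with generators $b_1, \ldots, b_s$, I first compute the integer matrix $M = (m_{ij})$ representing $f$ by running the expression algorithm of $B$ on each $f(a_i) \in \mcB$, obtaining $f(a_i) = \sum_j m_{ij} b_j$. This matrix, together with the order vectors $(q_i)$ and $(p_j)$, encodes $f$ completely as a map of finitely presented abelian groups.

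For the cokernel, $\coker f$ is $B$ modulo the subgroup generated by the images $f(a_i)$. Equivalently, it is the cokernel of the combined integer matrix $\bigl[M \;\big|\; \mathrm{diag}(p_1, \ldots, p_s)\bigr]^\top \colon \bbZ^{r+s} \to \bbZ^s$, whose columns consist of the images $f(a_i)$ together with the relations $p_j b_j = 0$. Applying Smith normal form yields invertible integer matrices $U, V$ with $UMV = D$ diagonal, and the diagonal entries directly give the invariant factor decomposition $\coker f \cong \bbZ/d_1 \oplus \cdots \oplus \bbZ/d_s$. I represent elements of $\coker f$ by elements of $\mcB$; equality $[\beta] = [\beta']$ is decided by testing whether $\beta - \beta'$ lies in the image, which is reduced via $U$ to asking whether certain coordinates are divisible by the corresponding $d_k$. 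The expression algorithm and the generators/orders are read off from the columns of $U^{-1}$.

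For the kernel, an element $\sum z_i a_i \in A$ lies in $\ker f$ exactly when $\sum_i z_i m_{ij} \equiv 0 \pmod{p_j}$ for all $j$, which together with the relations $q_i a_i = 0$ describes $\ker f$ as the kernel of an integer matrix quotiented by a lattice of relations. Concretely, $\ker f$ is the first homology of the two-step complex $\bbZ^r \to \bbZ^s \oplus \bbZ^s$ sending $z$ to $(Mz, 0)$ modulo the image of $\mathrm{diag}(q_1, \ldots, q_r)$ on the left and $\mathrm{diag}(p_1, \ldots, p_s)$ on the right. Once again this is a standard computation: combine Smith normal form on $M$ with the torsion relations, and extract the resulting invariant factors. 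The change-of-basis data produced along the way gives both the list of generators (as actual elements of $\mcA$) and the algorithm that expresses an arbitrary kernel element in terms of them. Elements of $\ker f$ are represented by elements of $\mcA$ (checked to map to zero via the equality algorithm of $B$), inheriting zero, addition, inverse, and equality from $A$.

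The main technical point, and the only place where any care is needed, is bookkeeping: the Smith normal form must be computed in a way that simultaneously returns the unimodular matrices $U, V$ and their inverses, so that one can translate effectively between the new generators of $\ker f$ or $\coker f$ and representatives in $\mcA$ or $\mcB$. With this bookkeeping in hand, all four required algorithms (equality, the group operations, generators with orders, and expression in terms of generators) reduce to matrix-vector arithmetic together with reductions modulo the orders~$q_i$ and~$p_j$.
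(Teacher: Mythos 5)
Your proof is correct and follows essentially the same route as the source: the paper itself gives no proof of this lemma but defers to \cite{post}, where kernels and cokernels of computable homomorphisms between (fully) effective abelian groups are handled by exactly this reduction to integer matrices and Smith normal form, with the unimodular change-of-basis matrices providing the generators, orders, and expression algorithms. The only blemishes are notational (the displayed matrix $\bigl[M \mid \mathrm{diag}(p_1,\ldots,p_s)\bigr]^\top$ has mismatched dimensions given your convention $f(a_i)=\sum_j m_{ij}b_j$, and the ``two-step complex'' describing $\ker f$ is stated loosely), but the underlying computation is the standard, correct one.
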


For the second lemma, we need a definition of exactness in the computational setting. Assuming $g \circ f = 0$, the exactness of a sequence
\[\xymatrix{A \ar[r]^{f} & B \ar[r]^{g} & C},\]
means surjectivity of the restricted map $f \colon A \to \ker g$. By the above, this condition should be replaced by a computable weak section and we arrive at the following definition. \label{s:effective_exactness_groups}

\begin{definition}
A \emph{weakly locally effective short exact sequence} is an exact sequence
\[\xymatrix{0 \ar[r]& A \ar[r]^{f} & B \ar[r]^{g} & C \ar[r]&0}\]
consisting of weakly locally effective Abelian groups and computable homomorphisms together with computable mappings $\sigma$ and $\rho$ such that:
\begin{itemize}
\item
$\sigma \colon \mcC \to \mcB$ such that $g([\sigma(\gamma)])=[\gamma]$ for all $\gamma \in \mcC$,
\item
$\rho \colon \mcB \to \mcA$, defined only on representatives of $\ker g$, such that $f([\rho(\beta)])=[\beta]$.
\end{itemize}
\end{definition}

\begin{lemma}[short exact sequence]\label{l:ses}
There is an algorithm that, given a weakly locally effective short exact sequence
\[\xymatrix{0 \ar[r]& A \ar[r]^{f} & B \ar[r]^{g} & C \ar[r]&0}\]
with $A$ and $C$ effective, supplies an effective representation of~$B$.
\end{lemma}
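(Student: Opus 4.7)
The plan is to exhibit explicit generators and relations for $B$, then reduce to Smith normal form, from which both the isomorphism type and the decomposition algorithm follow.

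First, I would collect generators for $B$. Since $A$ is effective, we have generators $a_1,\ldots,a_r$ of orders $q_1,\ldots,q_r$; since $C$ is effective, we have generators $c_1,\ldots,c_t$ of orders $q'_1,\ldots,q'_t$. Set $b_i = f(a_i) \in B$ for $1\le i\le r$ and $\widetilde c_j = [\sigma(\gamma_j)] \in B$ for $1\le j\le t$, where $\gamma_j\in\mcC$ is a representative of $c_j$. I claim the $b_i$ together with the $\widetilde c_j$ generate $B$: for any $\beta\in\mcB$, the element $[g(\beta)]\in C$ decomposes as $\sum n_j c_j$, so $\beta - \sum n_j \widetilde c_j$ represents an element of $\ker g = \im f$, and applying $\rho$ and then the decomposition algorithm of $A$ yields $\beta = \sum m_i b_i + \sum n_j \widetilde c_j$ in $B$. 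Note that this argument is already algorithmic.

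Next I would extract a presentation. Every relation $q_i a_i = 0$ in $A$ gives the relation $q_i b_i = 0$ in $B$. Every relation $q'_j c_j = 0$ in $C$ (for $q'_j\ne 0$) does not immediately give a relation on the $\widetilde c_j$, but $q'_j\widetilde c_j$ lies in $\ker g$, so $\rho$ produces a representative of an element $a\in A$ with $f(a)=q'_j\widetilde c_j$; decomposing $a = \sum_i m_{ij} a_i$ using $A$'s effective structure yields the relation
\[
q'_j \widetilde c_j - \sum_i m_{ij} b_i = 0 \quad\text{in } B.
\]
Assembling these into an integer matrix $M$ whose rows are the listed relations and whose columns are indexed by $b_1,\ldots,b_r,\widetilde c_1,\ldots,\widetilde c_t$, the cokernel $\bbZ^{r+t}/\im M$ is abstractly isomorphic to $B$, because the map from the free abelian group on the $b_i$ and $\widetilde c_j$ onto $B$ has been shown to be surjective, and any other relation in $B$ can be derived from the listed ones by a short diagram chase using exactness of $0\to A\to B\to C\to 0$.

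Now I would apply the Smith normal form algorithm to $M$, obtaining invertible integer matrices $U, V$ with $UMV = D$ diagonal; this yields the effective isomorphism $B \cong \bbZ/d_1\oplus\cdots\oplus\bbZ/d_s$ where the $d_k$ are the nonzero diagonal entries of $D$ together with copies of $\bbZ$ for any ``trivial'' columns. The new generators are the images of the columns of $V$ in $B$, and the decomposition of any $\beta\in\mcB$ proceeds in two steps: first use $g$, $\sigma$, and $\rho$ together with the effective decomposition algorithms of $A$ and $C$ to express $\beta = \sum m_i b_i + \sum n_j \widetilde c_j$ as above, then multiply the coefficient vector by $V^{-1}$ and reduce modulo the $d_k$.

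The main step requiring care is verifying that the listed relations really generate all relations in $B$, so that the cokernel of $M$ is genuinely $B$ and not a larger quotient; this follows from a standard snake-style argument using the exactness and the hypothesis that $\rho$ lifts kernel elements. Once this is in place, everything else is either Smith normal form or a routine chaining of the algorithms provided with $A$, $C$, $f$, $g$, $\sigma$, $\rho$.
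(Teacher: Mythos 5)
Your proof is correct and follows essentially the same route as the source the paper defers to for this lemma (the paper cites \cite{post} rather than proving it): generate $B$ by $f$ of $A$'s generators together with $\sigma$-lifts of $C$'s generators, produce the relations via $\rho$ and the decomposition algorithms of $A$ and $C$, check using injectivity of $f$ and uniqueness of coefficients in $A$ and $C$ that these relations present $B$, and finish with Smith normal form. The only caveat is bookkeeping in the last step: with your convention (rows of $M$ = relations, columns = generators, coefficient vectors as rows), the change of coordinates is $x \mapsto xV$ and the new generators are given by the rows of $V^{-1}$, not ``columns of $V$'' and ``multiply by $V^{-1}$'' --- a harmless convention-level slip that does not affect the argument.
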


\begin{lemma}[preimage]\label{l:preimage}
Let $f \colon A \to B$ be a computable homomorphism of effective Abelian groups. Then there is an algorithm that, given $b \in B$, decides whether it lies in $\im f$. If it does, it computes a preimage $a \in f^{-1}(b)$.
\end{lemma}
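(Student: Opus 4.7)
The plan is to reduce the preimage problem to a standard Diophantine linear system and solve it via the Smith normal form.

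First I would unpack the effective structures. The given isomorphism for $A$ supplies generators $a_1, \ldots, a_r$ with orders $q_1, \ldots, q_r$ and a coordinate algorithm, and analogously for $B$ we obtain generators $b_1, \ldots, b_s$ with orders $p_1, \ldots, p_s$ together with a coordinate algorithm that, given any $\beta \in \mcB$, returns integers $y_1, \ldots, y_s$ (unique in $\bbZ/p_j$) with $[\beta] = \sum_j y_j b_j$. Using the fact that $f$ is computable, I would evaluate $f(a_i)$ for each $i$, apply the $B$-coordinate algorithm, and assemble an $s \times r$ integer matrix $M = (M_{ji})$ whose $i$-th column lists the coordinates of $f(a_i)$. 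Similarly, I would compute a coordinate vector $\tilde b \in \bbZ^s$ of the target $b$.

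Next I would translate the preimage question into integer linear algebra. Writing $L \subseteq \bbZ^s$ for the lattice generated by $p_j e_j$ (only for those $j$ with $p_j \neq 0$), the element $b$ lies in $\im f$ if and only if there exists $z \in \bbZ^r$ with $Mz \equiv \tilde b \pmod{L}$. Equivalently, $\tilde b$ must lie in the image of the augmented integer matrix $N = [M \mid D']$, where $D'$ has the generators of $L$ as additional columns. Note that the orders $q_i$ of the generators of $A$ play no role here: any $z \in \bbZ^r$ gives a valid $a = \sum_i z_i a_i \in A$, and the effective group operations on $A$ then compute a representative.

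The third step is to decide solvability of $N w = \tilde b$ over $\bbZ$ and to exhibit a solution. I would compute the Smith normal form $UNV = S$ with $U,V$ unimodular and $S$ diagonal with invariant factors $d_1, \ldots, d_k$ (and zero rows below), and then check whether $U\tilde b$ satisfies the divisibility conditions $d_i \mid (U\tilde b)_i$ (with $(U\tilde b)_i = 0$ required below the rank). If so, back-substitution produces an explicit $w \in \bbZ^{r+|L\textup{-gens}|}$, and the first $r$ coordinates $z := (w_1, \ldots, w_r)$ yield the desired preimage $a = \sum_i z_i a_i$, assembled by repeated application of the computable addition on $\mcA$.

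The only subtle point, and the one I would check most carefully, is the handling of free summands of $B$ (those $j$ with $p_j = 0$): these contribute no columns to $D'$, so the corresponding rows of the system must be satisfied exactly, with no congruence freedom. The Smith normal form dispatches this uniformly provided one interprets ``divisibility by $0$'' as requiring the entry to vanish, and correctness of the whole procedure then reduces to the classical theory of finitely presented modules over $\bbZ$.
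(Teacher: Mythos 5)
Your proposal is correct and follows essentially the same route as the paper: compute $f(a_1),\ldots,f(a_r)$, translate the equation $x_1 f(a_1)+\cdots+x_r f(a_r)=b$ into the cyclic decomposition of $B$, and solve it by standard integer linear algebra, outputting $a=\sum_i x_i a_i$. The paper leaves the ``standard methods'' implicit, whereas you spell them out (augmenting by the relation columns $p_j e_j$ and using the Smith normal form, with free summands requiring exact equality), which is exactly the intended argument.
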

\begin{proof}
Compute the images $f(a_1), \ldots, f(a_r)$ of the generators of $A$. Next, decide if the equation
\[x_1 f(a_1) + \cdots + x_r f(a_r) = b\]
has a solution (this is done by translating to the direct sum of cyclic groups and solving there using the standard methods). If a solution exists, output $a = x_1 a_1 + \cdots + x_r a_r$.
\end{proof}

Later, the following lemmas will be used in the computation of Bredon cohomology which, in turn, will be useful in describing maps into Postnikov stages.

\begin{lem}\label{lem:minihom}
Let $A,B$ be effective Abelian groups. Then $\Hom (A,B)$ is an effective Abelian group.
\end{lem}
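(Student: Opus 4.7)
The plan is to reduce the computation to the standard cyclic building blocks. Since $A$ and $B$ are effective, we are given computable isomorphisms $A \cong \bbZ/q_1 \oplus \cdots \oplus \bbZ/q_r$ and $B \cong \bbZ/p_1 \oplus \cdots \oplus \bbZ/p_s$ together with their inverses (i.e., generators $a_i$, $b_j$, their orders, and the decomposition algorithms from the definition of effectiveness in Section~\ref{sec:effabgrp}). The natural isomorphism
\[\Hom(A,B)\;\cong\;\bigoplus_{i=1}^{r}\bigoplus_{j=1}^{s}\Hom(\bbZ/q_i,\bbZ/p_j)\]
then reduces the problem to the cyclic case, where the answer is explicit: $\Hom(\bbZ/q,\bbZ/p)$ is cyclic of order $\gcd(q,p)$ when both $q,p>0$; it equals $\bbZ/p$ when $q=0$; it equals $\bbZ$ when $q=p=0$; and it vanishes when $q>0$ and $p=0$. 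In each case a generator is given by a simple closed-form formula (sending the generator of the source to a multiple of the generator of the target), so the list of generators and orders of $\Hom(A,B)$ is computable.

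For the weakly locally effective layer, a homomorphism $\varphi\colon A\to B$ is represented by the tuple $(\beta_1,\ldots,\beta_r)\in\mcB^r$ of representatives of the images $\varphi(a_i)$, subject to the implicit constraint $q_i\cdot[\beta_i]=0$ in $B$; this constraint is decidable using the effective structure of $B$. Equality of two such tuples is tested componentwise via the equality test of $B$; zero, addition and inverse are computed componentwise from the corresponding operations in $B$. To pass to the standard effective form, I would apply the decomposition algorithm of $B$ to each $\beta_i$, obtaining coordinates in each $\bbZ/p_j$, and then use the explicit generator of $\Hom(\bbZ/q_i,\bbZ/p_j)$ computed above to read off the coefficient of $\varphi$ in the $(i,j)$-summand of $\Hom(A,B)$.

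There is no real obstacle in the argument; the only issue is bookkeeping, namely keeping track of all the degenerate cases $q_i=0$ or $p_j=0$ in the formula for $\Hom(\bbZ/q_i,\bbZ/p_j)$ and verifying that the componentwise operations assemble into the required algorithms listed in Section~\ref{sec:effabgrp}. Once this is done, the output is manifestly an effective abelian group in the sense of the definition.
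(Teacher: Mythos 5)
Your proof is correct, but it takes a genuinely different (and more general) route than the paper. The paper's own proof is a one-liner: it observes that the lemma will only ever be invoked with $A$ free abelian (in Lemma~\ref{lem:hom} the source is a chain group $C_*(X,A)$ of a cellular pair, hence free), so $\Hom(A,B)\cong B^r$ is a finite product of copies of the effective group $B$ and the conclusion is immediate. You instead prove the statement as literally stated, for arbitrary effective $A$ and $B$: you use the given decompositions $A\cong\bbZ/q_1\oplus\cdots\oplus\bbZ/q_r$, $B\cong\bbZ/p_1\oplus\cdots\oplus\bbZ/p_s$, reduce via $\Hom(A,B)\cong\bigoplus_{i,j}\Hom(\bbZ/q_i,\bbZ/p_j)$, and handle each cyclic block explicitly (order $\gcd(q_i,p_j)$ with the standard generator, plus the degenerate cases with $q_i=0$ or $p_j=0$), together with a sensible representation of homomorphisms by tuples of images of the generators $a_i$ subject to $q_i[\beta_i]=0$; all of this is computable and assembles into the algorithms required by the definition of effectiveness, including decidable equality transferred from $B$. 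What your approach buys is the full strength of the lemma, independent of how it is later used; what the paper's shortcut buys is brevity and the avoidance of exactly the case bookkeeping you mention, at the price of implicitly weakening the lemma to the free-source case that suffices for the Bredon cochain complexes appearing downstream.
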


\begin{proof}
The proof is not complicated. We will only need the case of $A$ being free Abelian so that $\Hom(A, B)$ is a product of copies of $B$ and the result is trivial.
\end{proof}


Let $\mcI$ be a fixed finite category and let $\pi \in \AbI$ be a diagram such that every $\pi(i)$ is effective Abelian and every morphism is a computable homomorphism. We then say that $\pi$ is an \emph{effective diagram of Abelian groups}. As a consequence of the previous lemma, we get:

\begin{lemma}\label{lem:hom}
Let $\mcI$ be a fixed finite category and let $\pi, \rho \in \AbI$ be effective diagrams of Abelian groups. Then $\Hom_{\AbI}(\rho, \pi)$ is an effective Abelian group.
\end{lemma}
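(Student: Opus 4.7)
The plan is to exhibit $\Hom_{\AbI}(\rho,\pi)$ as the kernel of an explicit computable homomorphism between two effective abelian groups, and then invoke Lemma~\ref{l:ker_coker}. Recall that a natural transformation $\alpha\colon\rho\to\pi$ is a family $(\alpha_i)_{i\in\mcI}$ with $\alpha_i\colon\rho(i)\to\pi(i)$, subject to the naturality condition $\alpha_i\circ\rho(f)=\pi(f)\circ\alpha_j$ for every morphism $f\colon i\to j$ of $\mcI$. Since $\mcI$ is finite, both the indexing set of objects and the indexing set of morphisms are finite, so this is a finite system of conditions on a finite product of $\Hom$-groups.

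First, I would form the ambient effective abelian group
\[M \;=\; \prod_{i\in\ob\mcI}\Hom(\rho(i),\pi(i)).\]
Each factor is effective by Lemma~\ref{lem:minihom}, and a finite product of effective abelian groups is again effective (standard; or by iterated application of Lemma~\ref{l:ses} to split short exact sequences). In parallel I would form
\[N \;=\; \prod_{f\in\mor\mcI}\Hom(\rho(\src f),\pi(\tar f)),\]
again effective for the same reason. Next, define the ``obstruction to naturality'' homomorphism $\Phi\colon M\to N$ component-wise by
\[\Phi\bigl((\alpha_i)_i\bigr)_f \;=\; \pi(f)\circ\alpha_j \;-\; \alpha_i\circ\rho(f),\qquad f\colon i\to j.\]
This $\Phi$ is a computable homomorphism, since each component uses only the given computable structure maps $\rho(f)$, $\pi(f)$ together with composition and subtraction of homomorphisms, both of which are available in the effective $\Hom$-groups supplied by Lemma~\ref{lem:minihom}.

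By construction $\Hom_{\AbI}(\rho,\pi)=\ker\Phi$ as abelian groups, with the obvious identification of a natural transformation with its tuple of components. Applying Lemma~\ref{l:ker_coker} to the computable $\Phi\colon M\to N$ between effective abelian groups then produces an effective representation of $\ker\Phi$, which is the desired effective representation of $\Hom_{\AbI}(\rho,\pi)$. The only step that might require care is the use of Lemma~\ref{lem:minihom} in full generality; as the remark there indicates, the standard applications need only the case where each $\rho(i)$ is free abelian (e.g.\ when $\rho$ arises as a cellular chain diagram $C_*(X)$), in which case $\Hom(\rho(i),\pi(i))$ is a finite product of copies of $\pi(i)$ and effectiveness is immediate, so no genuine obstacle is expected.
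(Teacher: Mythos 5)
Your proof is correct and is essentially the paper's own argument: exhibit $\Hom_{\AbI}(\rho,\pi)$ as the kernel of the computable ``naturality defect'' homomorphism between finite products of $\Hom$-groups and conclude by Lemmas~\ref{lem:minihom}, \ref{l:ses} and~\ref{l:ker_coker}. The only (purely notational) slip is the variance bookkeeping: since objects of $\AbI$ are functors $\mcI^\op\to\Ab$, the factor of the target indexed by $f\colon i_0\to i_1$ should be $\Hom(\rho(i_1),\pi(i_0))$, consistent with your own formula for $\Phi$.
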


\begin{proof}
Clearly $\Hom(\rho, \pi)$ is the kernel of the homomorphism
\[F \colon  \prod_{i \in \mcI} \Hom(\rho(i), \pi(i)) \to \prod_{f\colon i_0 \to i_1}\Hom(\rho(i_1), \pi(i_0))\]
given by
\[F(g_i)_{i \in \mcI} = \big(\pi(f)g_{i_1} - g_{i_0}\rho(f)\big)_{f \colon i_0 \to i_1}\]
and as such is effective according to Lemmas~\ref{lem:minihom}, \ref{l:ses} and~\ref{l:ker_coker} (dealing with $\Hom$, finite products and kernel, respectively).
\end{proof}

\subsection{Simplicial sets} \label{s:math_simplicial_sets}

For simplicial sets, we will assume that the representive of each simplex is unique (though, decidable equality should be sufficient).

\begin{Def}
Let $X$ be a simplicial set. We say that $X$ is \emph{locally effective} if the underlying sets $X_n$ of $n$-simplices are locally effective and algorithms are provided computing the faces and degeneracies of any given simplex of $X$.
\end{Def}

We will now use the canonical cellular structure of a simplicial set $X$ to describe a standard representation of a finite simplicial set. We recall that cells are exactly the non-degenerate simplices of $X$ and that any simplex $x$ can be uniquely written as $x = s_J \overline x$, a degeneracy of a non-degenerate simplex $\overline x$. We may then represent $x$ as a pair $(J, \overline x)$, where it is simple to come up with an encoding of a finite index set like $J$ and of a finite number of non-degenerate simplices like $\overline x$. In order to describe the cellular structure, we need to specify the attaching maps, i.e.\ for each non-degenerate simplex $\overline x$, we need to prescribe each of its faces: $d_i \overline x = s_J \overline y$. This then gives enough information for the computation of faces and degeneracies of arbitrary simplices, usign the simplicial identities.

We might call a simplicial set given as above, i.e.\ via a list $e_\alpha$ of its non-degenerate simplices and lists of identities of the form $d_i e_\alpha = s_J e_\beta$ with $e_\alpha$ and $e_\beta$ non-degenerate, a standard effective simplicial set. According to this choice, an \emph{effective simplicial set} is a locally effective simplicial set equipped with an isomorphism, computable in both directions, with a simplicial set as above. Explicitly, this means: there exists an algorithm producing a list of all non-degenerate simplices\footnote{%
	It is also possible to ask only for the list of all non-degenerate simplices of any given dimension (passed as the input), leading to what mathematicians would call a locally finite simplicial set (for a meaning of local different from ours).
} and an algorithm that expresses any given simplex as a degeneracy of a non-degenerate one (though, such an algorithm already follows from local effectivity).

The notion of local effectiveness specializes to diagrams of simplicial sets in the following way:

\begin{Def}
Let $\mcI$ be a finite category. We say that a diagram $X \in \Diag$ is \emph{locally effective}, if, for any object $i$ of $\mcI$, the simplicial set $X(i)$ is locally effective and, for any morphism $f$ of $\mcI$, the map $X(f)$ is computable.
\end{Def}


\begin{definition}
A cellular pair $(X,A)$ of diagrams of simplicial sets is \emph{effective} if $X$ is locally effective (and hence also $A$) and there is given
\begin{itemize}
\item
an algorithm that outputs a finite list of cells $e_\alpha \in X(i_\alpha)$, $\alpha \in \mcA$,
\item
an algorithm that, given a simplex $e \in X \smallsetminus A$, computes the unique expression
\[e = s_J(f^*(e_\alpha))\]
of Proposition~\ref{prop:cellular_diagrams_spaces}.
\end{itemize}
\end{definition}

There is a completely analogous definition of a \emph{pointwise effective} diagram, where the cells generate the individual simplicial sets in the diagram separately, i.e.\ the expression in the second point is replaced by $e = s_J(e_\alpha)$.

\subsection{Chain complexes}

A chain complex $C$ \emph{locally effective} if all the chain groups $C_n$ are locally effective Abelian groups and the differentials are computable. Analogously, a diagram $C \in \ChI$ of chain complexes is \emph{locally effective} if $C(i)$ is locally effective for every object $i$ of $\mcI$ and if $C(f)$ is a computable homomorphism for every morphism $f$ of $\mcI$.

\begin{definition}
A cellular pair $(C,C')$ of diagrams of chain complexes is \emph{effective} if $C$ is locally effective (and hence also $C'$) and there is given
\begin{itemize}
\item
an algorithm that outputs a finite list of cells $c_\alpha \in C(i_\alpha)$, $\alpha \in \mcA$,
\item
an algorithm that, given a chain $c \in C$, computes the unique expression
\[c = \sum_{\alpha \in \mcA; f \colon i \to i_\alpha} k_{\alpha, f} f^*(c_\alpha) \mod C'\]
of Proposition~\ref{prop:cellular_diagrams_complexes}.
\end{itemize}
\end{definition}



The following lemma follows easily from the definitions given in this section.
\begin{lemma}
Let $(X,A)$ be a locally effective pair of diagrams of simplicial sets $A,X \in  \Diag$ and let $\rho \in \AbI$ be an effective diagram of Abelian groups. The following holds:
\begin{enumerate}
\item If $(X,A)$ is effective, then $C_*(X,A)$ is effective.
\item If $(X,A)$ is effective, then $C^n _\mcI (X, A; \rho)$, $Z^n _\mcI (X, A; \rho)$ and $H^n _\mcI (X, A; \rho)$ are effective Abelian groups.
\end{enumerate}
\end{lemma}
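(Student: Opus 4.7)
\medskip

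\noindent\textbf{Proof plan.} The claim splits along its two parts and in both the strategy is to transfer effective cellular data along functors that are already understood in the paper, and then reduce to the abelian-group lemmas collected in Section~\ref{sec:effabgrp}.

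For (1), the plan is to push the cellular structure of $(X,A)$ through the normalized chain functor. The text preceding Proposition~\ref{prop:cellular_diagrams_complexes} already records that, because we use normalized chains, a cellular pair $(X,A)$ of diagrams of simplicial sets yields a cellular pair $(C_*(X),C_*(A))$ of diagrams of chain complexes, with cells $c_\alpha \in C_{n_\alpha}(X(i_\alpha))$ corresponding bijectively to the cells $e_\alpha \in X(i_\alpha)_{n_\alpha}$ of $(X,A)$. The cell-listing algorithm for $(X,A)$ therefore gives a cell-listing algorithm for $C_*(X,A)$. For the unique-decomposition algorithm, given a chain $c \in C_n(X)(i)$, I would write $c = \sum_k m_k x_k$ as an integer combination of simplices (this is how a chain is represented in a locally effective chain complex), discard the summands with degenerate $x_k$ (they vanish in the normalized complex), apply the cell-expression algorithm of $(X,A)$ to each remaining $x_k = f_k^*(e_{\alpha_k})$ (no inner degeneracy survives), and finally collect coefficients $k_{\alpha,f}$; summands with $x_k \in A$ are dropped because we work modulo $C_*(A)$. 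This produces the unique expression of Proposition~\ref{prop:cellular_diagrams_complexes}.

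For (2), I would first identify the cochain groups via Yoneda. By part~(1), the diagram $C_n(X,A) \in \AbI$ is, in each degree, isomorphic to a finite direct sum of representables,
\[
C_n(X,A) \;\cong\; \bigoplus_{\alpha \in \mcA,\, n_\alpha = n} \bbZ\mcI(-, i_\alpha),
\]
so the Yoneda lemma gives a natural isomorphism
\[
C^n_\mcI(X,A;\rho) \;=\; \Hom_{\AbI}\bigl(C_n(X,A),\rho\bigr) \;\cong\; \prod_{\alpha : n_\alpha = n} \rho(i_\alpha),
\]
a \emph{finite} product of effective abelian groups, hence effective (by iterated application of Lemma~\ref{l:ses}, or directly by Lemma~\ref{lem:hom}). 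The key bookkeeping is that a cochain $\varphi$ is represented by its list of values $\varphi(c_\alpha) \in \rho(i_\alpha)$, and the algorithm of part~(1) lets me evaluate $\varphi$ on any chain by feeding the chain's cell-decomposition into the representing tuple. Since the differential on $C_*(X,A)$ is computable, the coboundary $\delta = \partial^*$ is a computable homomorphism of effective abelian groups.

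With $C^*_\mcI(X,A;\rho)$ in hand as an effective cochain complex, the remaining two groups drop out from the standard effectivity lemmas: $Z^n_\mcI(X,A;\rho) = \ker\bigl(\delta\colon C^n \to C^{n+1}\bigr)$ is effective by the kernel part of Lemma~\ref{l:ker_coker}, and then $H^n_\mcI(X,A;\rho)$ is the cokernel of the corestricted coboundary $\delta\colon C^{n-1} \to Z^n$, which is computable as a map of effective groups, so Lemma~\ref{l:ker_coker} gives an effective representation of $H^n_\mcI(X,A;\rho)$. I expect the only mildly delicate step to be the bookkeeping in part~(1), making sure that the ``modulo $C_*(A)$'' part of the decomposition matches the ``$e \in X \smallsetminus A$'' convention in the definition of an effective cellular pair of diagrams of simplicial sets; everything else is a routine assembly of already-proved effectivity lemmas.
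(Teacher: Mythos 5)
Your proof is correct and is essentially the argument the paper has in mind: the lemma is stated there without proof (``follows easily from the definitions''), and your assembly — transferring the cells of $(X,A)$ to the normalized relative chains, identifying $C^n_\mcI(X,A;\rho)$ with a finite product of the $\rho(i_\alpha)$ via Yoneda (or Lemma~\ref{lem:hom}), and then invoking Lemma~\ref{l:ker_coker} for $Z^n$ and for $H^n$ as a kernel and a cokernel — is exactly the routine verification being alluded to. The one genuinely delicate point, that a non-degenerate simplex has empty degeneracy part in its unique cell expression so that normalization and the ``modulo $C_*(A)$'' convention cause no trouble, is addressed correctly in your part~(1).
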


We remark that $C^*_\mcI (X,A;\rho)$ is not an effective chain complex according to our definition since it does not consist of free Abelian groups.

\subsection{Eilenberg-MacLane diagrams} \label{s:EM_diagrams_comp}

Since the Eilenberg-MacLane diagram $\Kpin$ has as $\then$-simplices $(\Kpin)_\then \cong \pi$, for a locally effective $\Kpin$ the coefficient system $\pi$ must be locally effective, too. In addition $\pi \cong H_\then(\Kpin)$, so that the (not yet defined) pointwise homologically effective $\Kpin$ will have $\pi$ effective; the converse is also true (this is \cite[Theorem~3.16]{polypost}), but not needed. Since we do not want to introduce another name for locally effective diagrams $\Kpin$ with $\pi$ effective, we will call them pointwise homologically effective; until the construction of the Postnikov tower, the reader may consider these synonymous. The isomorphism
\[\map((X, A), (\overline W \Kpin, 0) \cong Z^{\then + 1}(X, A; \pi)\]
is computable in both directions and so is the one for cochains. When $\pi$ is effective, we may decide if a cocycle is a coboundary (cochain groups are effective and the differential is computable), so that we may also decide whether a given map $X \to \overline W\Kpin$, zero on $A$, lifts to a map $X \to W\Kpin$, zero on $A$. This will be a crucial ingredient for the computational version of the obstruction theory.

\subsection{Cofibrant replacements}

As mentioned in Section~\ref{s:cofibrant_replacment_math}, the cofibrant replacement of any diagram is cellular. By the explicit description of the cells, it is clear that the cofibrant replacement of a pointwise effective diagram is effective. We will later see a variation of this result in Proposition~\ref{prop:cofibrep} -- the cofibrant replacement of a pointwise homologically effective diagram is homologically effective.

\subsection{Towers}

We will be working only with $\then$-truncated towers, for $\then$ fixed. In this situation, we may replace all $\then$-truncated towers by $\then$-restricted towers. The locally effective towers then have the obvious definition. Of course, locally effective (non-truncated) towers can be defined easily too.

\subsection{Postnikov towers}

Theorem~\ref{t:postnikov-restat} constructs a pointwise homologically effective $\then$-restricted Postnikov tower. With the exception of the proof of this theorem, we will only use local effectiveness of the tower and the effectiveness of homotopy groups, as explained in Section~\ref{s:EM_diagrams_comp} -- the full strength of pointwise homological effectiveness is employed in the inductive construction of the tower.

\subsection{Weakly locally effective exact sequences} \label{sec:abstract_exact_sequence}

A \emph{weakly locally effective collection of groups} $\setone_\bullet$ is a collection of groups $\setone_\caleltfour$, indexed by $\caleltfour \in \calsetfour$, together with surjections $\calsetone_\caleltfour \to \setone_\caleltfour$ that, together, provide a weakly locally effective representation
\[\calsetone = \coprod \calsetone_\caleltfour \to \coprod \setone_\caleltfour\]
Addition in these groups is represented by a computable map $\coprod \calsetone_\caleltfour \times \calsetone_\caleltfour \to \coprod \calsetone_\caleltfour$, i.e.\ $\calsetone \times_\calsetfour \calsetone \to \calsetone$, etc. In other words, a single algorithm is required, computing the addition in all the groups in the collection.

Similarly, an \emph{effective collection of groups} is a weakly locally effective collection of groups that possesses, in addition, an algorithm that computes for any given $\caleltfour \in \calsetfour$ a set of generators of $\setone_\caleltfour$ together with their orders and also an algorithm that computes the expression of any element of $\calsetone$ as an integral combination of these generators.

A \emph{computable collection of group homomorphism}s $\connhom_\caleltthree \colon \setone_{\sigma(\caleltthree)} \to \settwo$, indexed by $\caleltthree \in \calsetthree$, is similarly represented by a computable map $\widetilde\connhom \colon \calsetthree \times_\calsetfour \calsetone \to \calsettwo$, taking a pair $(\caleltthree, \caleltone)$ to a representative of $\connhom_\caleltthree([\caleltone])$.

A \emph{weakly locally effective sequence} is
\[\setone_\bullet \xlra{\connhom_\bullet} \settwo \acts \setthree \xlra{s} \setfour \xlra{t} \setfive\]
where $\setthree$, $\setfour$ are weakly locally effective sets, $\setfive$ a weakly locally effective pointed set with basepoint $[o] \in \setfive$, $\settwo$ a weakly locally effective group and $\setone_\bullet$ a weakly locally effective collection of groups $\setone_\caleltfour$ indexed by $\caleltfour \in \calsetfour$. The maps $s$ and $t$ are computable maps of sets, represented by $\sigma$ and $\tau$, the arrow at $\setthree$ denotes a computable action of $\settwo$ on $\setthree$  and $\connhom_\bullet$ is a computable collection of group homomorphisms $\connhom_\caleltthree \colon \setone_{\sigma(\caleltthree)} \to \settwo$ indexed by $\caleltthree \in \calsetthree$.

A \emph{weakly locally effective exact sequence} is a weakly locally effective sequence in which the following algorithms are provided, parallel to Definition~\ref{d:exactness}:
\begin{itemize}
\item
	for $\varepsilon \in \mcE$ such that $t[\varepsilon] = 0$, compute $\delta \in \mcD$ such that $s[\delta] = [\varepsilon]$,
\item
	for $\delta,\, \delta' \in \mcD$ such that $s[\delta] = s[\delta']$, compute $\beta \in \mcH$ such that $[\delta] + [\beta] = [\delta']$,
\item
	for $\delta \in \mcD$ and $\beta \in \mcH$ such that $[\delta] + [\beta] = [\delta]$, compute $\alpha \in \mcG_{\sigma(\delta)}$ such that $[\beta] = \connhom_\delta [\alpha]$.
\end{itemize}

\begin{remark}
These algorithms are all the effective versions of certain natural surjections (i.e.\ computable weak sections of these surjections):
\begin{itemize}
\item
	the restriction $s \colon D \to t^{-1}(0)$,
\item
	the collection, indexed by $\delta$, of the action maps $H \to s^{-1}(s[\delta])$, $h \mapsto [\delta]+h$,
\item
	the collection, indexed by $\delta$, of the maps $\connhom_\delta \colon G_{\sigma(\delta)} \to \St [\delta]$ to the stabilizer group of $[\delta]$ under the action of $H$.
\end{itemize}
\end{remark}

Assuming that all terms are weakly locally effective Abelian heaps and a basepoint $[\delta] \in D$ is computable, we will obtain a weakly locally effective exact sequence of Abelian groups in the sense of Section~\ref{s:effective_exactness_groups}, details are given in the proof of Theorem~\ref{thm:inductive_step}.

\subsection{Running times and parametrized effectivity}\label{s:paramobjects}

We will now comment on our approach to the computational complexity of algorithms, a somewhat simplified version of~\cite{polypost} and~\cite{aslep}. We will explain this on the algorithm $H_\then$ computing, for a given finite simplicial set $X$, its $\then$-th homology group $H_\then X$. Of course, this algorithm is quite simple -- setup the chain complex of $X$ and compute its homology using the Smith normal form. The main point, however, is that the algorithm uses as subroutines the algorithms computing the faces of $X$ and also other algorithms of $X$ (its effective structure); otherwise, the algorithm is exactly the same for all simplicial sets. Therefore, the running time of $H_\then X$ depends (heavily) on the running times of the algorithms of $X$ and as such should be treated as a function of these running times. We have decided not to formalize this approach and, instead, we formulate our statements in the following vein: If all the algorithms of $X$ run in polynomial time then so do the algorithms of $H_\then X$ -- here, the algorithms for $X$ are those of an effective simplicial set, while those for $H_\then X$ are those of an effective Abelian group; this involves, in particular, the algorithm outputting the isomorphism type.

Clearly, when speaking about the complexity of computing $H_\then X$, we must consider a class of simplicial sets and $X$ should then be treated as an argument of $H_\then$. Technically, we consider $X$ to be parametrized by a parameter $p \in \sfP$ that involves all the data needed for the computations inside $X(p)$: a number of options is possible, e.g.\ one can specify a finite simplicial complex by the collection of its maximal simplices, one can specify a finite simplicial set as in Section~\ref{s:math_simplicial_sets} by the collection of its non-degenerate simplices and their faces, one can specify the Eilenberg-MacLane space $\Kpin$ via the group $\pi$ (the list of the orders of its cyclic summands), etc. The face operator $d_j$ in all the simplicial sets $X(p)$ of a given class will be required to be computed by a single algorithm that takes $p$ as an extra argument, i.e.\ $d_j(p, x)$ computes the $j$-th face of the simplex $x \in X(p)$. We will then say that $X$ is a \emph{family} of simplicial sets. We thus have a family of finite simplicial complexes, a family of finite simplicial sets, a family of Eilenberg-MacLane spaces $\Kpin$ (for all finitely generated Abelian groups $\pi$), etc.


\begin{definition}
Generally, our computational objects consist of a mathematical object and a set of algorithms. Denoting by $\mcC$ the \emph{class} of all such computational objects, the individual objects will be referred to as \emph{$\mcC$-objects} and the required algorithms as \emph{$\mcC$-algorithms}.
\end{definition}

Thus, to give a $\mcC$-object (e.g.\ a weakly locally effective Abelian group), we need to supply a full set of $\mcC$-algortihms for it (e.g.\ addition, etc.). As explained above, we have an obvious notion of a family of $\mcC$-objects, where an extra parameter $p \in \sfP$ is added.

\begin{definition}
We say that a given $\mcC$-object is \emph{polynomial time} if all the involved $\mcC$-algorithms have polynomial running time. Similarly, there is a notion of a \emph{polynomial time} family of $\mcC$-objects.
\end{definition}

%


\begin{definition} \label{d:construction}
A (computable) \emph{construction} $F \colon \mcC \to \mcD$ is a mapping on the level of mathematical objects (generally multi-valued) together with a full set of $\mcD$-algorithms that are allowed to use formal calls to $\mcC$-algorithms. In this way, a $\mcC$-object $X$ gives rise to a $\mcD$-object $F_*X$, by replacing the formal calls by calls to the actual $\mcC$-algorithms of $X$.

We say that this construction is \emph{polynomial time}, if it preserves polynomial time objects, i.e.\ $X$ polynomial time $\Rightarrow$ $F_*X$ polynomial time.
\end{definition}

Thus, a construction is like a $\mcD$-object modulo $\mcC$-algorithms and as such is suited for studying running times recursively. The following proposition, while very simple to prove, summarizes our approach to the running time analysis and explains why we do not have to deal with \emph{families} of objects explicitly.

\begin{proposition} \label{prop:propagating_polynomial_time_families}
When $X$ is a family of $\mcC$-objects, $F_*X$ is a family of $\mcD$-objects. Assuming $F$ to be a polynomial time construction, if $X$ is a polynomial time family, then so is $F_*X$.
\end{proposition}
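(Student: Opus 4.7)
The argument unfolds almost entirely from the definitions, the only substantive content being the observation that the ``shape'' of a construction is independent of the $\mcC$-object to which it is applied, so that polynomial bounds transfer uniformly across parameters.

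First I would verify that $F_*X$ is a family of $\mcD$-objects. By Definition~\ref{d:construction}, the construction $F$ is specified by a fixed finite collection of $\mcD$-algorithms, each making formal calls to $\mcC$-algorithms. The family $X$ is specified by a finite collection of $\mcC$-algorithms, each taking $p \in \sfP$ as an extra argument. Substituting these parametrized $\mcC$-algorithms for the formal calls in the $\mcD$-algorithms of $F$, and threading $p$ through transparently, yields a finite collection of $\mcD$-algorithms again taking $p$ as an extra argument. At each fixed $p$, these compute precisely the $\mcD$-object $F_*(X(p))$, so the resulting data constitutes a family $F_*X$ of $\mcD$-objects in the sense of Section~\ref{s:paramobjects}.

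For the polynomial-time claim, the key point is that a polynomial-time family of $\mcC$-objects is the same datum as a collection of $\mcC$-algorithms whose running times are polynomial in the total input size \emph{including the parameter} $p$. Since the $\mcD$-algorithms of $F$ are fixed (independent of the $\mcC$-object being processed), and since $F$ being a polynomial-time construction means that substituting polynomial-time $\mcC$-algorithms produces polynomial-time $\mcD$-algorithms, the same substitution applied to the parametrized $\mcC$-algorithms of the family $X$ produces $\mcD$-algorithms whose running times are polynomial in $|p|$ together with the other input sizes. Concretely, any $\mcD$-algorithm of $F$ issues at most polynomially many $\mcC$-calls, on inputs of polynomially bounded size, so composing with the polynomial running-time bound $q(|p|,\cdot)$ of the $\mcC$-algorithms of $X$ yields an overall polynomial bound in $(|p|,\text{input size})$.

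The only genuine obstacle is precisely this uniformity in $p$: the definition of ``polynomial-time construction'' is phrased for single objects, and one must check that it behaves well when the underlying $\mcC$-algorithms are additionally parametrized. This is guaranteed by the structural description of a construction as a \emph{fixed} collection of $\mcD$-algorithms with formal $\mcC$-calls -- the polynomial witnessing polynomiality of $F_*X$ depends only on the structural polynomial of $F$ and the polynomial bounding the $\mcC$-algorithms of $X$, both of which are fixed. Once this is noted, the conclusion is immediate.
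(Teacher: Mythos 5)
Your argument is correct and is exactly the reasoning the paper has in mind: the paper states this proposition without proof (calling it very simple), and your write-up supplies the intended argument of substituting the parametrized $\mcC$-algorithms of the family into the fixed $\mcD$-algorithms of $F$ and composing polynomial bounds. The uniformity point you isolate --- that the witnessing polynomial depends only on the fixed code of $F$ and on the single polynomial bounding the family's $\mcC$-algorithms, not on the individual object or the parameter $p$ --- is precisely the observation the paper relies on implicitly to pass from the single-object definition of a polynomial time construction to the statement about families.
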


With these notions at hand, we may simply say that the kernel is a polynomial time construction. However, we note that the name ``kernel'' only describes the mathematical part of the objects, thus we have to further specify the involved algorithms, i.e.\ that it takes a computable homomorphism between effective Abelian groups and gives an effective Abelian group.

\begin{remark}
In this way, for different families of homomorphisms, we get different families of kernels, i.e.\ the actual codes will differ, although only in the involved calls, so that this approach leads to code duplication. Of course, there are standard ways of dealing with this problem and we will not comment on this further.
\end{remark}

As a corrolary, the $\then$-th homology group $C \mapsto H_n(C)$ is a polynomial time construction (from effective chain complexes to effective Abelian groups) and so is the total homology $C \mapsto H_*(C)$ valued in effective graded Abelian groups. However, the running times of our algorithms are very sensitive to dimension, so that we will need to truncate all objects.

Similarly, the association
\[(0 \to A \to B \to C \to 0) \quad \longmapsto \quad B\]
from weakly locally effective exact sequences with the outer groups effective to effective Abelian groups is a polynomial time construction.

The preimage is interpreted as an association
\[(f \colon A \to B,\, b \in B) \quad \longmapsto \quad a\]
that is multi-valued, with the possibility of having no value at all (if $b$ does not lie in the image of $f$). As such, it is again a polynomial time construction.

\begin{remark}
In a family of weakly locally effective collections of groups and also in a family of computable collections of group homomorphisms, there are then employed two levels of parameters -- one coming from it being a family and the other from it being a collection. We will explain this on the example of the exact sequence~\eqref{eq:general_exact_sequence}: The map $\widetilde\connhom$ takes as arguments pairs $(\elln, h)$ with $\elln \colon X \to \Pnew$ and $h \colon \interval \times X \to \Pold$ a homotopy $\pn\elln \sim \pn\elln$, relative to $A$; note that the conditions on $h$ depend on the parameter $\elln$. In the setting of families, $\widetilde\connhom$ takes an extra argument $p$ that specifies $X$, $Y$, etc.: clearly, the conditions on $\elln$ depend on this extra parameter, so that the arguments dependences are $(p, \elln(p), h(p, \elln(p)))$ and the two parameter levels are not independent.
\end{remark}

\section{Proof of the main theorem}\label{s:main_theorem}
Now that we have defined the main notions, we are able to give a more detailed outline of the proof together with references to the appropriate statements forming the steps in the proof.

Theorem~\ref{thm:stable_situation_heaps} reduces the computation of $[X, Y]^A$ to that of the Abelian heap $[\Sigma X, \Sigma Y]^{\Sigma A}$. Then, according to Theorem~\ref{t:from_Y_to_Pn}, we may replace $\Sigma Y$ by its Postnikov stage $\Pnewst$, as long as $\then \geq \dim (\Sigma X) = 1 + \dim X$. We will thus compute
\[[\Sigma X, \Pnewst]^{\Sigma A}\]
inductively and finish with $\then = 1 + \dim X$. According to Theorem~\ref{t:representing_homotopy_classes}, we further replace $\Pnewst$ by the truncated tower $\Pnew$, so that we are left to compute inductively
\[[\Sigma X, \Pnew]^{\Sigma A}\]
(this, generally non-Abelian, heap is in fact computable even without the stability assumption, using methods of this paper together with those of \cite{FilVokri}).

A pointwise homologically effective Postnikov system is constructed in Theorem~\ref{t:postnikov-restat}. Theorem~\ref{thm:weakly_locally_effective_heap_structure} then equips each $[\Sigma X, \Pnew]^{\Sigma A}$ with a weakly locally effective Abelian heap structure and these are organized in an exact sequence~\eqref{eq:general_exact_sequence}, which is weakly locally effective by Theorem~\ref{thm:exact_sequence} and by induction. Finally, the algorithm of Theorem~\ref{thm:inductive_step} either finds out that $[\Sigma X, \Pnew]^{\Sigma A}$ is empty or equips this set with a structure of an effective Abelian group.

To set up the full computational strength of the main exact sequence, we will need to lift maps along the stages of the Postnikov tower. This is classically handled by the obstruction theory and we will thus have to develop its computational version. The importance of the suspension as a domain will come into the play at the very end; for this reason, we will work all the time with homotopy classes of maps from $X$ to a Postnikov tower $P$ of $Y$ but with a view of applying the machinery to $\Sigma X$, $\Sigma Y$ as explained above.

\subsection*{Running times}

All the results invoked in the above proof are described as polynomial time constructions of Definition~\ref{d:construction}. Thus, so is their composition
\[(A \subseteq X, Y, f \colon A \to Y) \mapsto [X,Y]^A\]
landing in the class of effective Abelian groups. Proposition~\ref{prop:propagating_polynomial_time_families} then shows that any polynomial time family of the input data produces a polynomial time family of the outputs. This holds regardless of an explicit way of encoding the input diagrams and maps, it is only required that all the (obvious) tasks should be computable in polynomial time; on the other hand, we outlined in Section~\ref{s:paramobjects} a simple way of encoding finite simplicial sets and this can be easily extended to finite diagrams of finite simplicial sets. Through this encoding (or any other), we thus obtain a concrete realization of our algorithm that is polynomial time.

\section{Computational obstruction theory}\label{s:obstruction theory}

\begin{assumption} \label{a:PT}
We will assume throughout this section that the Postnikov system $\varphi \colon Y \to P$ is pointwise homologically effective; later, this will be strengthened but at this point we will make do with $P$ being locally effective and all diagrams of homotopy groups being effective.

In addition, we will assume that $(X,A)$ is an effective pair of diagrams.

For running time analysis, we have to fix a bound on the dimensions of all objects. In this case, one should read all of the statements below as describing polynomial time constructions. That is, given that the pointwise homologically effective Postnikov system is polynomial time, as well as all the additional input data in the statements, the same holds for the output.
\end{assumption}

We will now describe (effective) obstruction theory for diagrams: in order to lift a homotopy class in $[X, \Pold]^A$ to a homotopy class in $[X,\Pnew]^A$, we represent the original homotopy class by a map of towers $X \to \Pold$ under $A$ so that we get, according to Theorem~\ref{t:representing_homotopy_classes}, to the following equivalent situation.

\begin{proposition} \label{prop:lift_ext_one_stage}
Under Assumpition~\ref{a:PT}, there is an algorithm that, given a computable commutative square of towers
\[\xymatrix@C=30pt{
A \ar[r] \ar@{ >->}[d] & \Pnew \ar@{->>}[d]^\pn \\
X \ar[r] \ar@{-->}[ru] & \Pold
}\]
decides whether an indicated lift exists. If it does, it computes one. If $H^{n+1}(X,A;\pi)=0$, then such a lift is guaranteed to exist.
\end{proposition}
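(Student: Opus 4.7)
The idea is the classical obstruction-theoretic one, read off from the pullback defining $\Pnew$:
\[
\xymatrix{
\Pnew \ar[r]^-{\qn} \ar[d]_-{\pn} \pb & \En \ar[d]^-{\deltan} \\
\Pold \ar[r]_-{\kn} & \Kn
}
\]
A lift $X \to \Pnew$ of the given $\ellnmo \colon X \to \Pold$ extending the given $A \to \Pnew$ is, by the universal property of the pullback, the same as a lift $X \to \En$ of the composite $\kn \circ \ellnmo \colon X \to \Kn$ extending the already given composite $A \to \Pnew \xra{\qn} \En$. Since only the top component determines a map into an $\then$-truncated tower (Lemma~\ref{l:maps_to_n_truncated}) and the lower components are forced by the given $\ellnmo$ and the cofibrant-replacement functor, it is the top component at level $\then$ that is essentially at issue.

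\textbf{The obstruction.} First I would form the top-level composite $c \coloneqq \kn \circ \towercompmap{\ellnmo}{\then} \colon \towercomp{X}{\then} \to \Knst$, which, by Proposition~\ref{prop:EML-map}, is an $(\then+1)$-cocycle $c \in Z^{\then+1}(X;\pi)$. Similarly the given top-level map $A \to \Pnew \to \En$ is an $\then$-cochain $\alpha \in C^\then(A;\pi)$ whose coboundary $\delta\alpha$ equals $c|_A$ (by commutativity of the outer square under $A$). Since the pair $(X,A)$ is cellular, $\alpha$ extends to some $\tilde{\alpha} \in C^\then(X;\pi)$ by assigning arbitrary (e.g.\ zero) values to the cells of $X \smallsetminus A$. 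The difference $c - \delta\tilde{\alpha}$ then vanishes on $A$ and so defines a relative cocycle in $Z^{\then+1}(X,A;\pi)$, whose cohomology class $[c - \delta\tilde{\alpha}] \in H^{\then+1}(X,A;\pi)$ is the obstruction: the required lift exists if and only if this class vanishes.

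\textbf{Algorithmic realization.} Under Assumption~\ref{a:PT}, the pair $(X,A)$ is effective, $\pi$ is an effective diagram of abelian groups, and the Postnikov invariant $\kn$ is a computable map of (pointwise homologically effective) towers. Therefore the cochain groups $C^*(X,A;\pi)$ are effective abelian groups with computable coboundary, and both the cocycle $c$ and the cochain $\alpha$ can be computed explicitly from the given data (using the correspondences in Section~\ref{s:EM_diagrams_comp}). The kernel and image of $\delta$ in the relevant dimensions are therefore effective subgroups, so one can decide by Lemma~\ref{l:preimage} whether $c - \delta\tilde{\alpha}$ is a coboundary of some $\beta \in C^\then(X,A;\pi)$ and, if so, compute such a $\beta$. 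When $H^{\then+1}(X,A;\pi)=0$ the decision is immediate. Replacing $\tilde{\alpha}$ by $\tilde{\alpha}+\beta$ we obtain a cochain on $X$ extending $\alpha$ whose coboundary is exactly $c$, i.e.\ a map $\towercomp{X}{\then} \to \Enst$ lifting $c$ and extending $A \to \En$ on the top level.

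\textbf{Assembling the map of towers.} From this top-level lift and the universal property of the pullback we get a map $\towercomp{X}{\then} \to \Pnewst$; combined with the already given $\varphim$, $\them < \then$ (coming from $\ellnmo$), it yields a full map of towers $X \to \Pnew$ by Lemma~\ref{l:maps_to_n_truncated}, and this map extends the given $A \to \Pnew$ and lifts the given $\ellnmo$ by construction. The only genuine obstacle is the bookkeeping around the cofibrant-replacement levels in the towers -- verifying that the extension of $\alpha$ across the cofibrant replacements matches the required structure maps at each level -- but the $0$-truncatedness of $X$ together with the functoriality of $\cofrst{(\ )}$ reduces all of this to the single top-dimensional check that the above computation performs.
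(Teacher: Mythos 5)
Your argument is correct and follows essentially the same route as the paper: reduce via the defining pullback to a lifting problem against $\En \to \Kn$, pass to the top level $\then$, translate to a relative cochain problem (extend the cochain on $A$ by zero, form the relative obstruction cocycle, decide algorithmically whether it is a relative coboundary using effectivity of $(X,A)$ and $\pi$), and assemble the map of towers from the level-$\then$ lift. The paper packages the cochain step as a separate lemma (Lemma~\ref{lem:lift_ext_one_stage}) and invokes the adjunction between $\towercomp{}{\then}$ and $\susp{}{\then}$ where you use Lemma~\ref{l:maps_to_n_truncated}, but the content is the same.
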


%

\begin{proof}
Since $\pn$ is a pullback of $\deltan$, we obtain an equivalent lifting problem
\[\xymatrix{
	A \ar[r] \ar[d] & \Pnew \ar[r] \ar[d] \pb & \En \ar[d] \\
	X \ar[r] \ar@{-->}[urr] & \Pold \ar[r] & \Kn
}\]
By adjunction between $\towercomp{}{\then}$ and $\susp{}{\then}$, this is further equivalent to the corresponding lifting problem at level $\then$:
\[\xymatrix{
	\An \ar[r] \ar[d] & \Pnewst \ar[r] \ar[d] \pb & \Enst \ar[d] \\
	\Xn \ar[r] \ar@{-->}[urr] & \cofrst{\Poldst} \ar[r] & \Knst
}\]
This lifting problem is translated to a cohomological problem in $C^{n+1}(\Xn, \An; \pi_n)$ as usual -- Lemma~\ref{lem:lift_ext_one_stage} makes this translation and solves the cohomological problem.

Finally, since $\Pnewst$ is a pullback in the right hand side square by Lemma~\ref{lem:Postnikov_tower_via_levels}, it is easy to compute a lift $\Xn \to \Pnewst$, and thus $X \to \Pnew$, from the lift in the composite square. 
\end{proof}

\begin{lemma} \label{lem:lift_ext_one_stage}
There is an algorithm that, given an effective pair of diagrams $(X',A')$, an effective diagram of Abelian groups $\pi \in \AbI$ and a computable commutative square of diagrams
\[\xymatrix@C=30pt{
A' \ar[r]^-c \ar@{ >->}[d] & \Engen \ar@{->>}[d]^\delta \\
X' \ar[r]_-z \ar@{-->}[ru] & \Kngen
}\]
decides whether an indicated lift exists. If it does, it computes one. If $H^{n+1}(X',A';\pi)=0$, then a lift exists for every $c$ and $z$.
\end{lemma}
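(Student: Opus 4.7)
The plan is to translate the lifting problem into effective cohomological algebra via Proposition~\ref{prop:EML-map}. Using the identifications $\map((X',A'),(\Engen,0))\cong C^n(X',A';\pi)$ and $\map((X',A'),(\Kngen,0))\cong Z^{n+1}(X',A';\pi)$, together with the fact that $\delta\colon\Engen\to\Kngen$ corresponds to the cochain coboundary, the given square is equivalent to a pair $(\gamma,\zeta)$ with $\gamma\in C^n(A';\pi)$ representing $c$, with $\zeta\in Z^{n+1}(X';\pi)$ representing $z$, and with $\delta\gamma$ equal to the restriction of $\zeta$ to $A'$. A lift corresponds to a cochain $\tilde\gamma\in C^n(X';\pi)$ extending $\gamma$ and satisfying $\delta\tilde\gamma=\zeta$.

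The algorithm then proceeds as follows. First, use effectiveness of the pair $(X',A')$ to extend $\gamma$ to some arbitrary cochain $\tilde\gamma_0\in C^n(X';\pi)$ (the surjection $C^n(X';\pi)\to C^n(A';\pi)$ admits a computable weak section, since on the level of cellular generators one simply extends by zero on the cells outside $A'$). Next, form the obstruction
\[o\;=\;\zeta-\delta\tilde\gamma_0\;\in\;C^{n+1}(X';\pi).\]
Since $\delta\tilde\gamma_0$ and $\zeta$ agree on $A'$, the cochain $o$ vanishes on $A'$, and since $\delta\zeta=0$ and $\delta^2=0$, it is a cocycle; thus $o\in Z^{n+1}(X',A';\pi)$. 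A lift exists if and only if $o$ is the coboundary of some $\eta\in C^n(X',A';\pi)$: indeed any such $\eta$ yields $\tilde\gamma:=\tilde\gamma_0+\eta$, which still extends $\gamma$ and satisfies $\delta\tilde\gamma=\zeta$, and conversely any lift produces such an $\eta$ as a difference.

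To decide the existence of $\eta$ algorithmically, note that $C^n(X',A';\pi)$ and $Z^{n+1}(X',A';\pi)$ are effective abelian groups by the lemma preceding Section~\ref{s:EM_diagrams_comp} (computed via $\Hom_{\AbI}$ and kernels, using Lemmas~\ref{lem:hom} and~\ref{l:ker_coker}), and the coboundary $\delta$ between them is a computable homomorphism. Hence Lemma~\ref{l:preimage} applies: we can decide whether $o\in\im\delta$ and, if so, compute a preimage $\eta$. Setting $\tilde\gamma=\tilde\gamma_0+\eta$ and translating back through Proposition~\ref{prop:EML-map} produces the desired lift $X'\to\Engen$. If $H^{n+1}(X',A';\pi)=0$, every cocycle in $Z^{n+1}(X',A';\pi)$ is a coboundary, so the obstruction $o$ automatically lies in $\im\delta$ regardless of $c$ and $z$; the algorithm therefore always succeeds in this case.

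The main subtlety I expect is not mathematical but bookkeeping: one must verify that the encoding of a map into $\Kngen$ or $\Engen$ as a (co)chain under the Proposition~\ref{prop:EML-map} isomorphism is computable in both directions on representatives (this is standard for the Eilenberg--MacLane model in use, with the isomorphism $\overline WK(\pi,n)\cong K(\pi,n+1)$ entering here), and that the computable surjection $C^n(X';\pi)\to C^n(A';\pi)$ and the effective-group structures are produced as polynomial time constructions from the effective pair $(X',A')$ and the effective diagram $\pi$. Granting these effective-algebra inputs, which are already packaged in the lemmas cited above, the argument above is a direct invocation of Lemma~\ref{l:preimage}.
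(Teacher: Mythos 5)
Your proposal is correct and follows essentially the same route as the paper's proof: extend $c$ by zero on cells outside $A'$, subtract the resulting solution to reduce to finding a relative cochain whose coboundary is $z-\delta\widetilde c$, and decide/compute this using the effectiveness of $C_*(X',A')$ (with the $H^{n+1}(X',A';\pi)=0$ case automatic). The only difference is that you spell out the cohomological bookkeeping and the invocation of Lemma~\ref{l:preimage} more explicitly than the paper does.
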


\begin{proof}
Thinking of $c$ as a cochain in $C^n(A';\pi)$, we extend it to a cochain on $X'$ by mapping all $n$-cells not in $A'$ to zero. This prescribes a map $\widetilde c \colon X' \ra \Engen$ that is a solution of the lifting-extension problem from the statement for $z$ replaced by $\delta\widetilde c$. Since the lifting-extension problems and their solutions are additive, one may subtract this solution from the previous problem and obtain an equivalent lifting-extension problem
\[\xymatrix@C=30pt{
A' \ar[r]^-{0} \ar@{ >->}[d] & \Engen \ar@{->>}[d]^\delta \\
X' \ar[r]_-{z-\delta\widetilde c} \ar@{-->}[ru]^-{c_0} & \Kngen
}\]
A solution of this problem is a relative cochain $c_0$ whose coboundary is $z_0=z-\delta\widetilde c$ (this $c_0$ yields a solution $\widetilde c+c_0$ of the original problem). Since $C_*(X',A')$ is effective, such a $c_0$ is computable whenever it exists (and it always exists in the case $H^{n+1}(X',A';\pi)=0$).
%
\end{proof}

Whenever $C_*(X, A)$ is acyclic, there exists a contraction of $C_*(X, A)$ (see Proposition~\ref{prop:gen} and recall that $(X, A)$ is assumed to be effective) and therefore its cohomology groups with arbitrary coefficients are zero. Thus, in this situation, all possible obstructions are zero and we may proceed inductively, using Proposition~\ref{prop:lift_ext_one_stage}, to lift through arbitrary number of stages. As special cases, we obtain the following two results.

\begin{proposition}[homotopy lifting] \label{prop:homotopy_lifting}
Under Assumpition~\ref{a:PT}, given a computable commutative square
\[\xymatrix{
(\vertex 0 \times X)\cup(\interval\times A) \ar[r] \ar@{ >->}[d]_-\sim & \Pnew \ar@{->>}[d] \\
\interval \times X  \ar[r] \ar@{-->}[ru] & \Polder
}\]
it is possible to compute a lift. In other words, one may lift homotopies in Postnikov towers algorithmically.
\end{proposition}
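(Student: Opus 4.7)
The plan is to reduce the multi-stage lifting to iterated single-stage lifts, using the remark immediately preceding the proposition: once the relative chain complex is acyclic, every obstruction vanishes and Proposition~\ref{prop:lift_ext_one_stage} applies unconditionally at each stage. First I would decompose the projection $\Pnew \to \Polder$ as a composition
\[\Pnew = P_{\then} \to P_{\then-1} \to \cdots \to P_{m+1} \to P_m = \Polder,\]
and build the lift inductively by producing compatible maps $\ell_k \colon \interval \times X \to P_k$ extending the prescribed map on $(\vertex 0 \times X)\cup(\interval \times A)$, starting from $\ell_m$ equal to the given bottom map.

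The key ingredient is that the cellular pair $(\interval \times X,\, (\vertex 0 \times X)\cup(\interval \times A))$ has an acyclic relative chain complex, and in fact admits an explicit, computable chain contraction. Concretely, the retraction of $\interval$ onto $\{0\}$ combined with the Eilenberg--Zilber machinery yields a contracting homotopy on $C_*(\interval \times X,\,(\vertex 0 \times X)\cup(\interval \times A))$ levelwise in $\mcI$, naturally in the diagram, giving an effective contraction on this relative chain complex of diagrams. Hence the Bredon cohomology groups
\[H^{k+1}\bigl(\interval \times X,\,(\vertex 0 \times X)\cup(\interval \times A);\,\pi_k\bigr)\]
all vanish, for every effective coefficient diagram $\pi_k$.

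The inductive step then proceeds by applying Proposition~\ref{prop:lift_ext_one_stage} (with $(X,A)$ replaced by the effective pair $(\interval \times X,\,(\vertex 0 \times X)\cup(\interval \times A))$ and with the square consisting of $\ell_{k-1}$ together with the prescribed top map at level $k$): since the relevant obstruction lives in the vanishing cohomology group above, a lift $\ell_k$ exists and is computed by the algorithm. After $\then - \them$ steps we obtain the desired $\ell_{\then} \colon \interval \times X \to \Pnew$, and Lemma~\ref{l:maps_to_n_truncated} packages it into a map of towers.

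The only point that requires some care—and is the main obstacle in a fully written-out proof—is the effectivity of the contraction on the relative chain complex of diagrams; everything else is formal once acyclicity is established. I expect that this can be handled either by importing the standard Eilenberg--Zilber contraction pointwise (naturality in $\mcI$ is automatic because the contraction is functorial in $X$) or, alternatively, by exhibiting a direct simplicial deformation retraction of $\interval \times X$ onto $(\vertex 0 \times X)\cup(\interval \times A)$ and passing to normalized chains, both of which yield polynomial-time algorithms.
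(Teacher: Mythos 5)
Your proposal is correct and follows essentially the same route as the paper: the paper's proof is exactly the observation that $C_*(\interval\times X,\,(\vertex 0\times X)\cup(\interval\times A)) \simeq C_*(\interval,0)\otimes C_*(X,A)$ is acyclic, combined with the preceding paragraph's inductive application of Proposition~\ref{prop:lift_ext_one_stage} through the stages. The one point you single out as the main obstacle---effectivity of the contraction---is already handled in the paper's framework, since effectiveness of the pair together with acyclicity yields a computable contraction via Proposition~\ref{prop:gen}, and the algorithm of Lemma~\ref{lem:lift_ext_one_stage} only needs effectiveness of the relative chain complex to solve the cochain equation, so no explicit Eilenberg--Zilber contraction is required.
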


\begin{proof}
The chain complex $C_*(\interval\times X, (\vertex 0\times X)\cup(\interval\times A)) \simeq C_*(\interval, 0) \otimes C_*(X, A)$ is acyclic, since $C_*(\interval, 0)$ is.
\end{proof}

The second result concerns algorithmic concatenation of homotopies.
Let $\horn21$ denote the first horn in the standard $2$-simplex $\stdsimp2$, i.e.\ the simplicial subset of the standard simplex $\stdsimp2$
spanned by the faces $\face{01}$ and $\face{12}$, where $\face{jk} \cong \interval$ denotes the subsimplex with vertices $j$, $k$.
Given two homotopies $h_2,h_0\colon\interval\times X\to Y$
that are compatible, in the sense that $h_2$ is a homotopy from $\ell_0$ to $\ell_1$ and $h_0$ is a homotopy from $\ell_1$ to $\ell_2$,
one may prescribe a map $\horn21\times X\to Y$ as $h_2$ on
$\face{01} \times X$ and as $h_0$ on $\face{12} \times X$.
If this map has an extension
$H\colon \stdsimp2\times X\to Y$, then the restriction of $H$ to
 $\face{02} \times X$ gives a homotopy from $\ell_0$ to $\ell_2$,
which can be thought of as a \emph{concatenation} of $h_2$ and $h_0$.
We will need the following effective,
relative version; the proof is entirely analogous
to that of the previous proposition and we omit it.

\begin{proposition}[homotopy concatenation] \label{prop:homotopy_concatenation}
Under Assumpition~\ref{a:PT}, given a computable commutative square
\[\xymatrix{
(\horn{2}{i}\times X)\cup(\stdsimp{2}\times A) \ar[r] \ar@{ >->}[d]_-\sim & \Pnew \ar@{->>}[d] \\
\stdsimp{2}\times X  \ar[r] \ar@{-->}[ru] & \Polder
}\]
it is possible to compute a lift. In other words, one may concatenate homotopies in Postnikov towers algorithmically.\qed
\end{proposition}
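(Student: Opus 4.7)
The plan is to mimic exactly the proof strategy of Proposition~\ref{prop:homotopy_lifting}, since both statements concern the vanishing of all obstructions for a lifting-extension problem against the Postnikov tower. The single input needed for Proposition~\ref{prop:lift_ext_one_stage} to propagate through all stages is the acyclicity of the relative chain complex of the cofibration on the left, which then forces $H^{n+1}$ with arbitrary coefficient diagram $\pi$ to vanish, making every obstruction trivially zero and allowing algorithmic lifting stage by stage.

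First I would verify that the pair $(\stdsimp{2}\times X,\, (\horn{2}{i}\times X)\cup(\stdsimp{2}\times A))$ is effective (in the sense required by Assumption~\ref{a:PT}): this is a formal consequence of $(X,A)$ being an effective pair of diagrams together with the standard cell structure on the finite simplicial sets $\stdsimp{2}$ and $\horn{2}{i}$, whose cells and boundary maps are explicitly known. Under this product-of-cell-complexes structure, cells outside $(\horn{2}{i}\times X)\cup(\stdsimp{2}\times A)$ are exactly products of the unique non-degenerate $2$-cell of $\stdsimp{2}$ (or the face $\face{02}$ when working relative to $\horn{2}{i}$) with cells of $X \setminus A$, and these are enumerable and recognizable by the effective-pair algorithms for $(X,A)$.

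Next I would identify the relative chain complex. By the Eilenberg--Zilber comparison applied objectwise in the diagram category, there is a chain homotopy equivalence
\[
C_*\bigl(\stdsimp{2}\times X,\, (\horn{2}{i}\times X)\cup(\stdsimp{2}\times A)\bigr)\;\simeq\; C_*(\stdsimp{2},\horn{2}{i})\otimes C_*(X,A).
\]
Since $\horn{2}{i}\hookrightarrow\stdsimp{2}$ is an anodyne extension (in particular $\horn{2}{i}$ is a strong deformation retract of $\stdsimp{2}$), the complex $C_*(\stdsimp{2},\horn{2}{i})$ is acyclic, hence so is the tensor product on the right. Therefore, for every diagram $\pi\in\AbI$,
\[
H^{n+1}\bigl(\stdsimp{2}\times X,\, (\horn{2}{i}\times X)\cup(\stdsimp{2}\times A);\,\pi\bigr)=0.
\]

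Finally I would invoke Proposition~\ref{prop:lift_ext_one_stage} inductively along the stages of the tower $\Polder$: at each stage the vanishing of $H^{n+1}$ with coefficients in $\pi_n$ guarantees, effectively, that a lift through $\Pnew\to\Pold$ exists and can be computed. Composing these finitely many lifts yields the desired $\stdsimp{2}\times X\to\Pnew$ extending the given map and lifting the given map to $\Polder$. The main (and really only) obstacle is the acyclicity identification above; everything else is a direct transcription of the proof of Proposition~\ref{prop:homotopy_lifting}, and in particular the polynomial-time claim is inherited automatically from that of Proposition~\ref{prop:lift_ext_one_stage} applied boundedly many times under the standing dimension bound in Assumption~\ref{a:PT}.
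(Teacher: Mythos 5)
Your argument is exactly the paper's intended proof: the authors state that the proof is entirely analogous to that of Proposition~\ref{prop:homotopy_lifting}, i.e.\ one observes that $C_*\bigl(\stdsimp{2}\times X,\,(\horn{2}{i}\times X)\cup(\stdsimp{2}\times A)\bigr)\simeq C_*(\stdsimp{2},\horn{2}{i})\otimes C_*(X,A)$ is acyclic because $C_*(\stdsimp{2},\horn{2}{i})$ is, so all obstructions vanish and Proposition~\ref{prop:lift_ext_one_stage} lifts stage by stage. Your additional checks (effectivity of the pair and the polynomial-time bookkeeping) are correct and consistent with the paper's framework, so the proposal is complete and matches the omitted proof.
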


Since the algorithm takes the restrictions of homotopies to $(\partial \interval \times X) \cup (\interval \times A)$ as an input, we obtain as a corollary:

\begin{corollary} \label{c:wle_htpy_classes}
Under Assumpition~\ref{a:PT}, $[\interval \times X, \Pold]^A_\bullet$ is a weakly locally effective collection of groups.
\end{corollary}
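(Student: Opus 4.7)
The plan is to unwind the definitions and read off the algorithms directly from Proposition~\ref{prop:homotopy_concatenation} and Proposition~\ref{prop:homotopy_lifting}. First I would fix a representation: an element of the collection is encoded as a pair $(\ellnmo, h)$, where $\ellnmo \colon X \to \Pold$ is an actual map of towers under $A$ (representing the index $\eltfour = [\ellnmo] \in [X,\Pold]^A$) and $h \colon I \times X \to \Pold$ is an actual map of towers restricting to $\ellnmo$ on both $0 \times X$ and $1 \times X$ and to the constant homotopy at the given $A \to \Pold$ on $I \times A$. Since $P$ is locally effective and $(X,A)$ is effective, such pairs admit a straightforward computer representation, and this gives a weakly locally effective representation of $\coprod_{\eltfour} [I \times X,\Pold]^{\partial}_{\eltfour}$ together with the projection to its index set.

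The zero element of the group $[I \times X,\Pold]^{\partial}_{[\ellnmo]}$ is represented by the constant homotopy $h^0(t,x) = \ellnmo(x)$, which is immediately computable from~$\ellnmo$; this gives a single algorithm producing zero elements uniformly across the entire collection. For addition, given two representatives $(\ellnmo, h_2)$ and $(\ellnmo, h_0)$ with the same index, one assembles them (together with the constant homotopy on $\stdsimp{2} \times A$) into a computable map
\[
(\horn{2}{1} \times X) \cup (\stdsimp{2} \times A) \lra \Pold,
\]
applies Proposition~\ref{prop:homotopy_concatenation} to obtain a lift $H \colon \stdsimp{2} \times X \to \Pold$, and returns the restriction of $H$ to $\face{02} \times X \cong I \times X$. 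This single algorithm produces a representative of $[h_2] + [h_0]$ for every index $\ellnmo$ simultaneously.

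For the inverse, I would repeat the horn-filling trick but with $\horn{2}{0}$: given $(\ellnmo, h)$, prescribe $h$ on $\face{01} \times X$, the constant homotopy at $\ellnmo$ on $\face{02} \times X$, and the constant homotopy at the given $A \to \Pold$ on $\stdsimp{2} \times A$; applying Proposition~\ref{prop:homotopy_concatenation} yields a filler whose restriction to $\face{12} \times X$ is a representative of $-[h]$. All three operations are uniform in the index $\ellnmo$ (which only enters as a parameter of the boundary data fed into Proposition~\ref{prop:homotopy_concatenation}), so this assembles into a weakly locally effective collection of groups in the sense of Section~\ref{sec:abstract_exact_sequence}. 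The only conceptual point worth checking is that the group laws are satisfied on the nose on homotopy classes, but this is standard: the two horn-filling constructions combine, via further applications of Proposition~\ref{prop:homotopy_concatenation}, to give the required associativity, unit and inverse homotopies. Since the only input is the horn-filling algorithm of Proposition~\ref{prop:homotopy_concatenation}, there is no substantive obstacle; the statement is essentially a bookkeeping corollary.
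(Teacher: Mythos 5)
Your proof is correct and follows essentially the same route as the paper: the paper derives this corollary directly from Proposition~\ref{prop:homotopy_concatenation}, observing that the concatenation algorithm takes the boundary restrictions (the index $\ellnmo$ and the constancy on $\interval \times A$) as input and is therefore uniform over the collection, which is exactly your argument, with zero given by the constant homotopy and inverses by the other horn filling. Your write-up merely makes explicit the representation by pairs $(\ellnmo, h)$ and the uniformity in the index, which the paper leaves implicit.
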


In the case that the homotopies are not relative, i.e.\ constant on $A$, it is not possible to concatenate and we only get a heap structure.


\begin{theorem} \label{thm:weakly_locally_effective_heap_structure}
Under Assumpition~\ref{a:PT}, it is possible to equip $[\Sigma X, \Pnew]^{\Sigma A}$ with a weakly locally effective Abelian heap structure.
\end{theorem}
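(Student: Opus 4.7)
The plan is to transfer the abelian heap structure on $[\Sigma X, \Sigma Y]^{\Sigma A}$ furnished by Theorem~\ref{thm:stable_situation_heaps} through the natural chain of isomorphisms
\[[\Sigma X, \Sigma Y]^{\Sigma A} \cong [\Sigma X, \Pnewst]^{\Sigma A} \cong [\Sigma X, \Pnew]^{\Sigma A}\]
provided by Theorem~\ref{t:from_Y_to_Pn} and Theorem~\ref{t:representing_homotopy_classes}, and to realize the resulting ternary operation computably at the level of representatives, that is, on maps of towers $\ell \colon \Sigma X \to \Pnew$ under $\Sigma A$. Since homotopy is not assumed decidable in this setting, this is precisely what the notion of a weakly locally effective heap requires.

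The combinatorial realization of the heap operation proceeds as follows. A map $\ell \colon \Sigma X \to \Pnew$ under $\Sigma A$ corresponds, by the universal property of the unreduced suspension, to a map $\widetilde \ell \colon \interval \times X \to \Pnew$ that is constant on $\{0\}\times X$ and on $\{1\}\times X$ with values prescribed by $\Sigma A \to \Pnew$, and that agrees on $\interval \times A$ with the given structural map. Thus the three inputs $\ell_1, \ell_2, \ell_3$ translate into three paths with common endpoints in the relative mapping complex, and their heap combination $\ell_1 +_{\ell_2} \ell_3$ is the composite path $\ell_1 \cdot \overline{\ell_2} \cdot \ell_3$. Combinatorially, this is achieved by a bounded number of horn fillings of the shape $(\horn{2}{1} \times X) \cup (\stdsimp 2 \times A) \to \Pnew$ -- one to effectively invert $\ell_2$, a second to concatenate the inverse with $\ell_1$, and a third to concatenate with $\ell_3$ -- each of which is a direct invocation of Proposition~\ref{prop:homotopy_concatenation}. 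The output is again a map of towers $\Sigma X \to \Pnew$ under $\Sigma A$, so the operation stays within our representation.

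Well-definedness on homotopy classes, the heap identities, and abelianness are mathematical consequences of Theorem~\ref{thm:stable_situation_heaps} and require no computational input; what remains is a single computable ternary operation on representatives, which is supplied by the recipe above. Combined with the assumed computability of the structural data of $\Pnew$ and of $(X, A)$, this furnishes the required weakly locally effective abelian heap structure. The main technical delicacy is that $\Pnew$ is not levelwise fibrant, so one cannot naively perform the horn fillings in the individual stages $\Pnewst$; however, this is sidestepped by the fact that Proposition~\ref{prop:homotopy_concatenation} is already formulated inside the tower framework, where the cofibrant replacements built into $\Pnew$ absorb exactly this defect.
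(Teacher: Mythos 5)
Your proposal is correct and follows essentially the same route as the paper: homotopy classes are represented by maps of towers $\interval \times X \to \Pnew$ fixed on $(\partial\interval\times X)\cup(\interval\times A)$, and the heap operation is computed by filling simplices mapping to $\Pnew$, all obstructions vanishing by acyclicity of the relevant relative chain complexes (via Proposition~\ref{prop:lift_ext_one_stage}). The only cosmetic difference is that the paper performs a single extension over $\stdsimp{3}\times X$ prescribed on the edges $\face{13},\face{12},\face{02}$ and reads off $\face{03}$ (remarking it reduces to two $2$-dimensional face extensions), whereas you use three $2$-horn fillings; note only that Proposition~\ref{prop:homotopy_concatenation} is stated as a lifting problem over a given map to a lower stage, so your ``direct invocations'' should either supply that bottom map or simply run the stagewise extension of Proposition~\ref{prop:lift_ext_one_stage}, exactly as the paper does.
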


\begin{proof}
There is an obvious isomorphism coming from the definition of a suspension as a quotient of the cylinder:
\[[\Sigma X, \Pnew]^{\Sigma A} \cong [\interval \times X, \Pnew]^{(\partial \interval \times X) \cup (\interval \times A)}\]
(in fact, the right hand side is slightly more general in that it allows the maps to be fixed on the two ends of the cylinder in a non-constant way). In accordance with Theorem~\ref{t:representing_homotopy_classes}, we will work with maps of towers $\interval \times X \to \Pnew$ under the appropriate subspaces. Given three maps of towers $\ell_1 \comma o \comma \ell_2 \colon \interval \times X \to \Pnew$, we organize them into a single map
\[(\face{13}\times X)\cup(\face{12}\times X)\cup(\face{02}\times X)\xlra{(\ell_1,o,\ell_2)}\Pnew\]
and we note that the tower on the left is $(\face{13}\cup \face{12}\cup \face{02})\times X$. 
\[\xy
<0pc,0pc>;<2pc,0pc>:
(-2,1)*+{\scriptstyle\bullet}="a0"*++!R{0},"a0";
(0,2)*+{\scriptstyle\bullet}="a2"*++!D{2},"a2"**\dir{-};?>*\dir{>},?<>(.5)*+!D{\scriptstyle\ell_2},
(0,0)*+{\scriptstyle\bullet}="a1"*++!U{1},"a1"**\dir{-};?<*\dir{<},?<>(.3)*+!L{\scriptstyle o},
(2,1)*+{\scriptstyle\bullet}="a3"*++!L{3},"a3"**\dir{-};?>*\dir{>},?<>(.5)*+!U{\scriptstyle\ell_1},
"a0";"a3"**\dir{},?<>(.35)*+!U{\scriptstyle\ell},?<>(.45)**\dir{--},"a0";"a3"**\dir{};?<>(.55);**\dir{--},?>*\dir{>},
"a0";"a1"**\dir{.},"a2";"a3"**\dir{.}
\endxy\]
Together with the composite
\[\stdsimp 3\times A\xlra{s^0s^2 \times \id} \interval \times A \lra \Pnew\]
(the first map takes vertices $0, 1 \in \stdsimp 3$ to the vertex $0 \in \interval$ and vertices $2, 3 \in \stdsimp 3$ to $1 \in \interval$, while the second map is the common restriction of $\ell_1,o,\ell_2$), these describe the top map in the diagram
\[\xymatrix@C=3pc{
((\face{13}\cup \face{12}\cup \face{02})\times X)\cup(\stdsimp 3\times A) \ar[r] \ar@{ >->}[d]_-\sim &
	\Pnew \\
\stdsimp{3}\times X \ar@{-->}[ru]
}\]
An extension can be computed inductively using Proposition~\ref{prop:lift_ext_one_stage} as in the previous special cases and its restriction to $\face{03}\times X$, denoted by $\ell$ in the above picture, gives a representative of $[\ell_1] +_{[o]} [\ell_2]$. It is standard that the resulting map $\ell$ is unique up to homotopy relative to $(\partial \interval \times X) \cup (\interval \times A)$.
\end{proof}

We remark that, as a slight simplification, it is enough to extend first to the face $\face{123} \times X$ and then to $\face{023} \times X$ in order to obtain $\ell$.

\section{An exact sequence and the inductive computation}\label{s:exact_sequence}

\begin{assumption} \label{a:inductive}
In addition to Assumption~\ref{a:PT}, we require the inductive hypothesis that $[\interval \times X,\Pm]^\partial_\bullet$ is an effective collection of Abelian groups, for all $\them < \then$. In fact, this holds even without the stability assumption, although these groups are generally not Abelian but only polycyclic, see~\cite{FilVokri}.

Again, once a bound on the dimensions of all objects is fixed, all the theorems prescribe polynomial time constructions, i.e.\ if all the inputs are assumed to be polynomial time, the same holds for the output.
\end{assumption}

We recall the exact sequence~\eqref{eq:general_exact_sequence}:
\[[\interval \times X,\Pold]^\partial_\bullet \xlra{\connhom_\bullet} [X,\Ln]^A \acts [X,\Pnew]^A \xlra{\pnst} [X,\Pold]^A \xlra{\knst} [X,\Kn]^A,\]
%

\begin{theorem} \label{thm:exact_sequence}
Under Assumpition~\ref{a:inductive}, the above is a weakly locally effective exact sequence.
\end{theorem}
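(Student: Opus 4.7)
The plan is to build on the abstract exactness of~\eqref{eq:general_exact_sequence} (established in Proposition~\ref{prop:connecting_homomorphism}) and verify effectiveness by three separate tasks: identifying each term as a weakly locally effective set (or collection), producing algorithms that represent each structure map, and providing computable weak sections for the three surjections that encode exactness in the sense of Section~\ref{sec:abstract_exact_sequence}.

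For the terms, the two Eilenberg--MacLane endpoints will be handled via the adjunction between $\towercomp{}{\then}$ and $\susp{}{\then}$, combined with Proposition~\ref{prop:EML-map} and Lemma~\ref{l:representing_homotopy_classes_EM}, identifying $[X,\Kn]^A\cong H^{n+1}(X,A;\pi_n)$ and $[X,\Ln]^A\cong H^n(X,A;\pi_n)$, both effective abelian groups by the homological algebra of cellular pairs. The middle sets $[X,\Pold]^A$ and $[X,\Pnew]^A$ will be weakly locally effective with maps of towers under $A$ as representatives (Theorem~\ref{t:representing_homotopy_classes}); equality will not be decidable there, which is exactly why the resulting sequence is only \emph{weakly} locally effective. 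The collection $[\interval\times X,\Pold]^\partial_\bullet$ is effective by Assumption~\ref{a:inductive}, with Corollary~\ref{c:wle_htpy_classes} providing the underlying weakly locally effective representation by actual homotopies. For the structure maps, $\pnst$ and $\knst$ together with the action of $\Ln$ on $\Pnew$ will be represented by postcomposition, while the connecting homomorphism $\connhom_{[\elln]}$ will lift a given $h\colon I\times X\to\Pold$ to $\widetilde h\colon I\times X\to\Pnew$ starting at $\elln$ via Proposition~\ref{prop:homotopy_lifting} and return the endpoint difference $\widetilde h|_{1\times X}-\elln\colon X\to\Ln$ interpreted through the principal bundle structure on $\Pnew\to\Pold$.

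Each of the three effectiveness conditions will then be dispatched by a single application of the computational obstruction theory of Section~\ref{s:obstruction theory}. For~(a), given $\ellnmo\colon X\to\Pold$ with $[\kn\ellnmo]=0\in[X,\Kn]^A$, Proposition~\ref{prop:lift_ext_one_stage} outputs a lift $\elln\colon X\to\Pnew$ of $\ellnmo$ since the relevant cohomology class vanishes. For~(b), given representatives $\elln,\elln'$ together with a homotopy witness $h\colon\pn\elln\simeq\pn\elln'$ rel~$A$, I will lift $h$ through $\pn$ starting at $\elln$ by Proposition~\ref{prop:homotopy_lifting}, obtaining a lift $\widetilde h$ ending at some $\elln''$ with $\pn\elln''=\pn\elln'$, and output $\zeta=\elln'-\elln''\colon X\to\Ln$; the fibrewise-shifted homotopy $\widetilde h+\zeta$ then witnesses $\elln+\zeta\simeq\elln'$ rel~$A$. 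For~(c), given $h\colon I\times X\to\Pnew$ rel~$A$ from $\elln$ to $\elln+\zeta$, simply output $\alpha=\pn h$, a self-homotopy at $\ellnmo$; since $h$ is itself a lift of $\alpha$ starting at $\elln$ and its endpoint difference with $\elln$ is $\zeta$, the very definition of $\connhom_{[\elln]}$ gives $\connhom_{[\elln]}[\alpha]=[\zeta]$.

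The principal obstacle will be the careful bookkeeping between tower-level representatives and the top-level diagrams through which the obstruction theory is phrased, and, in condition~(b), ensuring that the hypothesis $s[\delta]=s[\delta']$ really does come equipped with a concrete homotopy witness produced at the previous inductive step (a subtlety that disappears once one unwinds how equalities in $[X,\Pold]^A$ are certified by the heap structure). All pieces will run in polynomial time by Proposition~\ref{prop:propagating_polynomial_time_families}, since each step only invokes already-polynomial constructions.
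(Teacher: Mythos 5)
Your setup (representing all terms by maps of towers, identifying the end terms via Proposition~\ref{prop:EML-map}, computing the weak section of $\pnst$ by Proposition~\ref{prop:lift_ext_one_stage}, and describing $\connhom_\bullet$ and the action via homotopy lifting and the bundle difference) matches the paper's proof. But there is a genuine gap in how you discharge the two remaining exactness algorithms. In the definition of a weakly locally effective exact sequence, the inputs to these algorithms are \emph{bare representatives}: for the action one is given $\elln, \elln'$ with $\pnst[\elln]=\pnst[\elln']$, and for the stabilizer condition one is given $\elln$ and $\zeta$ with $[\elln]+[\zeta]=[\elln]$ --- in both cases the equality of homotopy classes is a mathematical hypothesis, not a certified datum, and no homotopy witness accompanies it (equality in $[X,\Pold]^A$ is not even decidable at this point, which is precisely why the objects are only \emph{weakly} locally effective). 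Your treatment of (b) assumes "a homotopy witness $h\colon\pn\elln\simeq\pn\elln'$" is part of the input, and your treatment of (c) likewise starts from a given homotopy $h\colon \interval\times X\to\Pnew$ from $\elln$ to $\elln+\zeta$; the closing remark that this "subtlety disappears once one unwinds how equalities are certified by the heap structure" is not correct --- nothing in the weakly locally effective framework certifies equalities, and the downstream use of these weak sections (Lemma~\ref{l:ses} in the proof of Theorem~\ref{thm:inductive_step}) really does feed them representatives with no witnesses attached.

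Producing those witnesses algorithmically is the heart of the paper's proof: it is Lemma~\ref{l:homotopies_computable}, which, given two maps of towers known only abstractly to be homotopic rel $A$, computes a homotopy by induction up the Postnikov tower, using the exact sequence at the lower stages, the effectivity of $[X,\Lm]^A$ (Lemma~\ref{lem:fully_eff_cohlgy}), the preimage algorithm (Lemma~\ref{l:preimage}), and homotopy concatenation (Proposition~\ref{prop:homotopy_concatenation}). This is exactly where the inductive part of Assumption~\ref{a:inductive} --- that $[\interval\times X,\Pm]^\partial_\bullet$ is an \emph{effective} collection for $\them<\then$ --- is consumed; in your proposal that hypothesis is only used to call the first term of the sequence effective, which is a sign that the essential mechanism is missing. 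Once such a homotopy-construction subroutine is supplied, your descriptions of (b) and (c) (lift the computed homotopy through $\pn$ starting at $\elln$ and take the endpoint difference; respectively project the computed homotopy to $\Pold$) coincide with the paper's argument.
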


\begin{proof}
All terms are represented by maps of towers. This is covered by Theorem~\ref{t:representing_homotopy_classes} and Lemma~\ref{l:representing_homotopy_classes_EM}. The computability of various maps in the diagram is clear and the weak locally effective group structure on the first term is provided by Corollary~\ref{c:wle_htpy_classes}. The computability of the basepoint of the last term follows from Lemma~\ref{lem:En_contractible}, since the basepoint is given as an arbitrary composition $X \to \En \xra{\deltan} \Kn$.

A weak section of $\pnst$ is computed by lifting a map $\ellnmo \colon X \to \Pold$ to $\Pnew$, using Proposition~\ref{prop:lift_ext_one_stage}. A weak section of $\connhom_\bullet$ is computed as follows: given $\elln \colon X \to \Pnew$ and $\zeta \colon X \to \Ln$ such that $\elln \sim \elln + \zeta$, compute such a homotopy $h$ using Lemma~\ref{l:homotopies_computable} and project it to $\Pold$ to obtain a preimage $\pn h$, according to $\connhom_{[\elln]}[\pn h] = [\zeta]$. A weak section for the action is computed by a combination of the previous ingredients: given $\elln$, $\elln'$ such that $\pn \elln \sim \pn \elln'$, compute such a homotopy $h$ using Lemma~\ref{l:homotopies_computable} and lift it, using Proposition~\ref{prop:homotopy_lifting} to a homotopy $\elln  + \zeta \sim \elln'$ to obtain a preimage $\zeta$, according to $[\elln] + [\zeta] = [\elln']$.
\end{proof}

\begin{lemma} \label{lem:En_contractible}
For any effective pair $(X, A)$ of diagrams and any computable map $A \to \En$, we have $[X, \En]^A = *$ and a representative can be computed algorithmically.
\end{lemma}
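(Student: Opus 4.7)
The plan is to reduce the lemma, via the adjunction $\towercomp{}{\then} \dashv \susp{}{\then}$, to a statement about maps of diagrams into the contractible space $\Enst = WK(\pi_\then, \then)$, and then handle that via the cochain model of $WK(\pi_\then,\then)$.

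First I would invoke the adjunction exactly as in Lemma~\ref{l:representing_homotopy_classes_EM}. Since $\En = \susp{\Enst}{\then}$ and $\Enst$ is a simplicial group (hence fibrant), maps of towers $X \to \En$ under $A$ correspond bijectively to maps of diagrams $\towercomp{X}{\then} \to \Enst$ under $\towercomp{A}{\then}$, and passing to homotopy classes gives
\[[X, \En]^A \;\cong\; [\towercomp{X}{\then}, \Enst]^{\towercomp{A}{\then}},\]
using Lemma~\ref{l:cofibrant_replacement_tensors} so that the cylinder object $I \times X$ in towers matches $I \times \towercomp{X}{\then}$ at level $\then$. Both correspondences are concretely computable.

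Next I would identify maps to $\Enst$ with cochains. By (the unpointed version of) Proposition~\ref{prop:EML-map}, a map $\towercomp{A}{\then} \to WK(\pi_\then,\then)$ is the same datum as a cochain $c_A \in C^\then(\towercomp{A}{\then}; \pi_\then)$, and similarly for maps out of $\towercomp{X}{\then}$. Since $(X, A)$ is an effective pair of diagrams, so is $(\towercomp{X}{\then}, \towercomp{A}{\then})$ by the discussion in Section~\ref{s:cofibrant_replacment_math}; in particular, its cells can be enumerated and membership in $\towercomp{A}{\then}$ decided. I would then compute a representative by extending $c_A$ to $\widetilde c \in C^\then(\towercomp{X}{\then}; \pi_\then)$ by declaring $\widetilde c$ to vanish on every non-degenerate $\then$-cell of $\towercomp{X}{\then}$ not in $\towercomp{A}{\then}$, and translating $\widetilde c$ back through the two correspondences above to obtain a map $X \to \En$ under $A$.

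For the assertion $[X,\En]^A = *$, I would argue uniqueness by a parallel extension argument: given two cochains $c_0, c_1 \in C^\then(\towercomp{X}{\then}; \pi_\then)$ both restricting to $c_A$, a homotopy rel $\towercomp{A}{\then}$ between the corresponding maps is, again by Proposition~\ref{prop:EML-map}, a cochain on $I \times \towercomp{X}{\then}$ restricting to $c_0, c_1, \pi_X^* c_A$ on $\{0,1\}\times \towercomp{X}{\then}$ and $I \times \towercomp{A}{\then}$, respectively; these prescriptions are compatible on intersections and extend by zero on the remaining non-degenerate cells, which produces the required homotopy. Thus the set of homotopy classes is reduced to a single point. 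The only real subtlety — and the step I would expect to need the most care — is verifying that the two adjunctions (at the level of maps and at the level of homotopies) are compatible enough to transport an actual representative map $X \to \En$, rather than merely a homotopy class; this is where the identification $I \times \towercomp{X}{\then} \cong \towercomp{(I \times X)}{\then}$ from Lemma~\ref{l:cofibrant_replacement_tensors} is used and where one must confirm that the resulting map is genuinely a map of towers under $A$, not only at the top level.
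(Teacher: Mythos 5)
Your proposal is correct and follows essentially the same route as the paper: reduce via the adjunction to $[\towercomp{X}{\then}, \Enst]^{\towercomp{A}{\then}}$, then use the cochain description of maps into $WK(\pi_\then,\then)$ to extend by zero on the cells of the pair for existence, and apply the same extension principle to $(\partial I \times \towercomp{X}{\then}) \cup (I \times \towercomp{A}{\then}) \subseteq I \times \towercomp{X}{\then}$ for the relative homotopy. The only caveat is that ``extend by zero'' must be done on the cells of the cellular pair in the diagram sense (naturality then determines the values on all other simplices), not freely on arbitrary non-degenerate simplices, which is exactly how the paper phrases it.
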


\begin{proof}
We have $[X, \En]^A \cong [\towercomp{X}{n}, \Enst]^{\towercomp{A}{n}}$. For any cellular $A' \to X'$ the extension problem
\[\xymatrix{
A' \ar[r] \ar[d] & \Engen \\
X' \ar@{-->}[ru]
}\]
is solvable -- it just means that any cochain in $C^n(A'; \pi)$ extends to a cochain in $C^n(X'; \pi)$; any such extension is determined by the images of the cells of $X' \smallsetminus A'$, e.g.\ we may assign them the zero value.

Applying this to $\towercomp{A}{n} \to \towercomp{X}{n}$ and to $(\partial\Delta^1 \times \towercomp{X}{n}) \cup (\Delta^1 \times \towercomp{A}{n}) \to \Delta^1 \times \towercomp{X}{n}$ yields the existence of a map and the existence of a relative homotopy between any two such maps.
\end{proof}


\begin{lemma} \label{l:homotopies_computable}
An algorithms exists, computing for any given representatives $\elln$, $\elln'$ with $[\elln] = [\elln'] \in [X, \Pnew]^A$ a homotopy $\elln \sim \elln'$ relative to $A$.
\end{lemma}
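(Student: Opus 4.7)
My plan is to induct on the Postnikov level $\then$, with the base case $\then = 0$ being immediate (the stage $\Pzerost$ captures only $\pi_0$-information). For the inductive step, I assume the lemma at all levels below $\then$.

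Given $\elln, \elln' \colon X \to \Pnew$ under $A$ with $[\elln] = [\elln']$, I first reduce to the lower stage: by functoriality $[\pn \elln] = [\pn \elln']$, so the inductive hypothesis supplies a homotopy $h_0 \colon \pn \elln \sim \pn \elln'$ rel $A$. Proposition~\ref{prop:homotopy_lifting} lifts $h_0$ to a homotopy $\widetilde h_0$ in $\Pnew$ starting at $\elln$, and its endpoint $\elln''$ satisfies $\pn \elln'' = \pn \elln'$. Since $\pn$ is a principal $\Ln$-bundle, the bundle difference $\zeta := \elln'' - \elln' \colon X \to \Ln$ is computable and vanishes on $A$. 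Because $[\elln''] = [\elln] = [\elln']$, the exactness encoded in Proposition~\ref{prop:connecting_homomorphism} places $[\zeta]$ in $\im \connhom_{\elln'}$.

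Next, I compute a preimage of $[\zeta]$ under $\connhom_{\elln'}$. By Assumption~\ref{a:inductive} the group $[\interval \times X, \Pold]^\partial_{\pn \elln'}$ is effective abelian; the target $[X, \Ln]^A \cong H^{\then}(X, A; \pin)$ is effective abelian via Bredon cohomology. The homomorphism $\connhom_{\elln'}$ is computable on any representative by lifting via Proposition~\ref{prop:homotopy_lifting} and taking the endpoint-difference in the bundle; evaluating it on a generating set yields $\im \connhom_{\elln'}$, and Lemma~\ref{l:preimage} provides a representative $\alpha$ with $\connhom_{\elln'}[\alpha] = [\zeta]$. I then form the modified base $h := h_0 \cdot \alpha^{-1}$ by concatenation in $\Pold$ (Proposition~\ref{prop:homotopy_concatenation} at the lower level), and invoke Proposition~\ref{prop:lift_ext_one_stage} for the lifting-extension problem with base $h$ and boundary given by $\elln$, $\elln'$, and the constant homotopy at $f$ on $(\partial \interval \times X) \cup (\interval \times A)$. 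The cohomological obstruction shifts additively by $\connhom_{\elln'}[\alpha]$ under post-concatenation of the base with a loop $\alpha$; hence the obstruction for $h = h_0 \cdot \alpha^{-1}$ equals $[\zeta] + \connhom_{\elln'}[\alpha^{-1}] = 0$, and the proposition outputs the desired homotopy $\elln \sim \elln'$ rel $A$.

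The main obstacle I expect is establishing the additivity identity relating the endpoint-difference $[\zeta]$ to the cochain-level obstruction in $H^{\then+1}(\interval \times X, (\partial \interval \times X) \cup (\interval \times A); \pin) \cong [X, \Ln]^A$ and tracking how this obstruction transforms when the base is modified by a loop. Unwinding the pullback defining $\Pnewst$ together with the definition of $\connhom_{\elln'}$ via homotopy lifting yields the identification, although sign-bookkeeping requires care.
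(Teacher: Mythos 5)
Your proposal is correct and follows essentially the same route as the paper's proof: reduce to the previous stage by induction, lift the base homotopy via Proposition~\ref{prop:homotopy_lifting}, measure the endpoint discrepancy $\zeta$ using the principal $\Ln$-bundle difference, use exactness of~\eqref{eq:general_exact_sequence} together with Lemma~\ref{l:preimage} (effectivity coming from Assumption~\ref{a:inductive} and Lemma~\ref{lem:fully_eff_cohlgy}) to find a self-homotopy of the base mapping to $[\zeta]$ under $\connhom$, and then correct the lift. The only organizational difference is that the paper lifts the preimage homotopy with both ends prescribed and concatenates upstairs in the Postnikov stage via Proposition~\ref{prop:homotopy_concatenation}, whereas you concatenate downstairs in $\Pold$ and perform a single both-ends-prescribed lift; the obstruction-additivity fact you flag as the main technical point is precisely what the paper disposes of by citing \cite[Proposition~7]{FilVokri}.
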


\begin{proof}
Given $\elln \colon X \to \Pnew$, we define $\ell_\them$ to be the composition of $\elln$ with the canonical projection $\Pnew \to \Pm$. We proceed by induction on the height $\them$ of the Postnikov tower to compute a homotopy $h_{\them} \colon \ell_\them \sim \ell_{\them}'$. When a homotopy $h_{\them-1} \colon \ell_{\them-1} \sim \ell_{\them-1}'$ has been computed, we lift it, using Proposition~\ref{prop:homotopy_lifting}, to a homotopy $\widetilde h_{\them-1}\colon\ell''_\them\sim\ell_\them'$ from some map $\ell''_\them$ lying over $\ell_{\them-1}$. Then $\ell''_\them = \ell_\them + \zeta_\them$ for a unique $\zeta_\them \colon X \to \Lm$, namely $\zeta_\them = \ell''_\them - \ell_\them$.

Since Proposition~\ref{prop:homotopy_concatenation} provides algorithmic means for concatenating homotopies, it remains to construct a homotopy $h'_\them \colon \ell_\them \sim \ell''_\them$. Consider the connecting homomorphism in~\eqref{eq:general_exact_sequence} for stages $\Pmmo$ and $\Pm$, i.e.
\[\connhom_{\ell_\them}\colon[\interval \times X,\Pmmo]^\partial \lra[X,\Lm]^A.\]
From the exactness of~\eqref{eq:general_exact_sequence} and from $\ell_\them \sim \ell_\them' \sim \ell''_\them = \ell_\them + \zeta_\them$, it follows that $[\zeta_\them]$ lies in the image of $\connhom_{\ell_\them}$. Since the target is effective by Lemma~\ref{lem:fully_eff_cohlgy}, the algorithm of Lemma~\ref{l:preimage} then computes some $h_{\them-1}'$ such that $\connhom_{\ell_\them}[h_{\them-1}']=[\zeta_\them]$.

It is then easy to see (cf.\ \cite[Proposition~7]{FilVokri}) that the required homotopy $h'_\them \colon \ell_\them \sim \ell''_\them$ can be computed as a lift of the homotopy $h_{\them-1}'$ as in (i.e.\ both ends of $h_\them'$ prescribed):
\[\xymatrix{
(\partial\interval \times X)\cup(\interval \times A) \ar[r] \ar[d] & \Pm \ar[d] \\
\interval \times X \ar[r]_-{h_{\them-1}'} \ar@{-->}[ru]_-{h'_\them} & \Pmmo
}\]
Proposition~\ref{prop:lift_ext_one_stage} provides an algorithm for the computation of $h_\them'$ and the proof is finished.
\end{proof}

\begin{theorem}\label{thm:inductive_step}
Under Assumpition~\ref{a:inductive}, if the main exact sequence consists of weakly locally effective Abelian heaps and heap homomorphisms and if $[X, \Pold]^A$ is equipped with an effective Abelian group structure, then it is possible to algorithmically decide whether $[X, \Pnew]^A$ is non-empty and, if this is the case, further equip $[X, \Pnew]^A$ with an effective Abelian group.
\end{theorem}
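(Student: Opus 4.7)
The plan is to express $[X,\Pnew]^A$ as the middle term of a computable short exact sequence of abelian groups whose outer terms are effective, and then invoke Lemma~\ref{l:ses}. The first task is to decide non-emptiness: by exactness of~\eqref{eq:general_exact_sequence}, $[X,\Pnew]^A\neq\emptyset$ if and only if the basepoint $0\in[X,\Kn]^A$ lies in the image of $\knst$. The basepoint is computable by Lemma~\ref{lem:En_contractible}; moreover, both $[X,\Ln]^A$ and $[X,\Kn]^A$ are effective abelian groups, since by Lemma~\ref{l:representing_homotopy_classes_EM} and Proposition~\ref{prop:EML-map} they identify with Bredon cohomology groups $H^\then(X,A;\pin)$ and $H^{\then+1}(X,A;\pin)$, effective via Lemma~\ref{lem:hom} applied to the effective coefficient diagram $\pin$ and the effective pair $(X,A)$. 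Applying Lemma~\ref{l:preimage} to the computable homomorphism $\knst\colon[X,\Pold]^A\to[X,\Kn]^A$ (after the obvious rebasings) either certifies $[X,\Pnew]^A=\emptyset$ or returns some $[\ellnmo]\in\ker\knst$, and Proposition~\ref{prop:lift_ext_one_stage} then lifts $\ellnmo$ along $\pn$ to a representative $\elln\colon X\to\Pnew$; the obstruction to the lift vanishes by the choice of $[\ellnmo]$.

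I would then declare $[\elln]$ the zero of $[X,\Pnew]^A$, turning the weakly locally effective abelian heap supplied by Theorem~\ref{thm:weakly_locally_effective_heap_structure} into a weakly locally effective abelian group. Choosing compatible basepoints on the remaining terms of~\eqref{eq:general_exact_sequence}---the canonical zero on $[X,\Ln]^A$, the element $[\ellnmo]=\pnst[\elln]$ on $[X,\Pold]^A$, the canonical $0$ on $[X,\Kn]^A$, and the zero of $[I\times X,\Pold]^{\partial}_{[\ellnmo]}$ provided by Assumption~\ref{a:inductive}---converts~\eqref{eq:general_exact_sequence} into an exact sequence of weakly locally effective abelian groups. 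Truncating yields the short exact sequence
\[0\lra \coker\connhom_{[\elln]}\xra{i}[X,\Pnew]^A\xra{\pnst}\ker\knst\lra 0,\]
where $i([\zeta])=[\elln]+[\zeta]$. Its outer terms are effective by Lemma~\ref{l:ker_coker}, applied to $\knst$ and to the computable homomorphism $\connhom_{[\elln]}\colon[I\times X,\Pold]^{\partial}_{[\ellnmo]}\to[X,\Ln]^A$ between effective abelian groups (the domain is effective by Assumption~\ref{a:inductive}).

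It remains to exhibit computable weak sections of the two surjections and invoke Lemma~\ref{l:ses}. A weak section of $\pnst$ is supplied directly by Proposition~\ref{prop:lift_ext_one_stage}. A weak section of $i$ on representatives of $\ker\pnst$ is assembled from the weak section of the action supplied by Theorem~\ref{thm:exact_sequence}: given $[\elln']$ with $\pnst[\elln']=\pnst[\elln]$, it computes $[\zeta]\in[X,\Ln]^A$ with $[\elln']=[\elln]+[\zeta]$, which is then projected to $\coker\connhom_{[\elln]}$. Lemma~\ref{l:ses} now equips $[X,\Pnew]^A$ with the desired effective abelian group structure.

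The main technical hurdle I anticipate is the bookkeeping in the second step: verifying carefully that the heap-level exact sequence of Theorem~\ref{thm:exact_sequence}, once rebased at the compatible system of basepoints built from $[\elln]$, genuinely becomes an exact sequence of abelian groups in the sense of Section~\ref{s:effective_exactness_groups}, and that every computable weak section in the heap setup remains a computable weak section after rebasing. Once this translation is in place, the remaining assembly via effective homological algebra is routine.
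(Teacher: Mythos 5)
Your proposal follows essentially the same route as the paper's proof: decide non-emptiness by testing, via Lemma~\ref{l:preimage}, whether the canonical zero of $[X,\Kn]^A$ lies in the image of $\knst$, lift a preimage with Proposition~\ref{prop:lift_ext_one_stage} to obtain a basepoint of $[X,\Pnew]^A$, rebase the heaps at the resulting compatible basepoints, truncate the exact sequence~\eqref{eq:general_exact_sequence} to $0\to\coker\connhom\to[X,\Pnew]^A\to\ker\knst\to0$, and conclude with Lemmas~\ref{l:ker_coker} and~\ref{l:ses} using the weak sections from Theorem~\ref{thm:exact_sequence}. The rebasing step you flag as the main hurdle is exactly what the paper dispatches with its short ``translating zero'' proposition (the translated group $S_p$ stays (weakly locally) effective), so your argument is correct and matches the paper's.
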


\begin{remark} \label{rk:H_space_approach}
Our proof of the main theorem replaces the computation of $[X, Y]^A$ by that of $[\Sigma X, \Pnewind{\Sigma Y}]^{\Sigma A}$, for $\then \geq 1 + \dim X$, with $\Pnewind{\Sigma Y}$ the Postnikov stage of $\Sigma Y$, and then utilizes the above theorem, since the latter carries a natural weakly locally effective heap structure by Theorem~\ref{thm:weakly_locally_effective_heap_structure}. It is also possible to put a weakly locally effective heap structure directly on $[X, \Pnewind Y]^A$, with $\Pnewind Y$ the Postnikov stage of $Y$; namely, it is possible to construct a weak H-space structure (or rather a heap version of an H-space structure) on the Postnikov stage $\Pind Y$, under the assumption $\then \leq 2 \conn Y$. The advantage of this approach is that we need only $\dim X$ stages of the Postnikov tower $\Pnewind Y$ and we believe that this should make the resulting algorithm faster in practice. We have decided to use the suspension $\Sigma Y$ mainly for the simplicity of the heap operation.
\end{remark}

We now proceed with a few preliminary results needed for the proof of Theorem~\ref{thm:inductive_step}.

\subsection{Translating zero}

Let $S$ be a group and $p \in S$ an element. We define a new group structure on $S$, denoted by $S_p$, by declaring the right translation $S \to S_p$, $x \mapsto x + p$, to be an isomorphism. Consequently, $S_p$ has zero $p$, addition $x +_p y = x - p + y$ and inverse $-_p x = p - x + p$.

\begin{proposition}
If the group $S$ is weakly locally effective or effective, then so is $S_p$.
\end{proposition}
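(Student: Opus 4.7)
The plan is to exploit the fact that $S$ and $S_p$ share the same underlying set and representation $\mcS$; only the three group operations change, and each is expressible using the operations of $S$ together with the fixed element $p$. I will first handle weak local effectiveness, then upgrade to effectiveness, by giving the required algorithms explicitly in terms of those of $S$.

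For weak local effectiveness I take the same representation $\mcS \to S_p$ and observe that equality is decided by the algorithm of $S$ (or, in the merely weakly locally effective case, simply inherited and not required). The zero algorithm of $S_p$ outputs a representative of $p$ (computable because $p$ is given). Addition in $S_p$ is implemented by the composite: given $\alpha, \beta \in \mcS$, compute a representative of $[\alpha] - p + [\beta]$ using the subtraction and addition algorithms of $S$; similarly, inverse in $S_p$ computes $p - [\alpha] + p$. These run in polynomial time whenever the corresponding algorithms of $S$ do, so that the construction $(S, p) \mapsto S_p$ is polynomial time as a construction from weakly locally effective abelian groups (with a distinguished computable element) to weakly locally effective abelian groups.

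For effectiveness, I use the translation isomorphism $\varphi \colon S \to S_p$, $\varphi(x) = x + p$, whose inverse is $y \mapsto y - p$; both are computable using the operations of $S$ and the element $p$. If $S$ comes with effective generators $a_1, \ldots, a_r$ of orders $q_1, \ldots, q_r$, then $\varphi(a_1), \ldots, \varphi(a_r)$, i.e.\ representatives of $a_1 + p, \ldots, a_r + p$, are generators of $S_p$ with the same orders $q_i$. The expression algorithm for $S_p$ proceeds as follows: given $\alpha \in \mcS$ representing an element of $S_p$, first compute $\varphi^{-1}([\alpha]) = [\alpha] - p$ via the algorithms of $S$, then apply the expression algorithm of $S$ to obtain integers $z_i$ with $[\alpha] - p = \sum z_i a_i$ in $S$; these same $z_i$ express $[\alpha] = \sum z_i \varphi(a_i)$ in $S_p$, because $\varphi$ is a group isomorphism. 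All these steps are polynomial time provided the analogous algorithms of $S$ are.

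There is no real obstacle here: the content is purely bookkeeping, and the entire proof is essentially the observation that a computable group isomorphism transfers (weak) local effectiveness and effectiveness along itself. I would therefore expect the write-up to be a short paragraph explicitly listing the four algorithms above and remarking that polynomiality is preserved, so that this yields a polynomial time construction in the sense of Definition~\ref{d:construction}.
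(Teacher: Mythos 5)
Your proof is correct and follows essentially the same route as the paper: weak local effectiveness read off directly from the formulas $x +_p y = x - p + y$ and $-_p x = p - x + p$, and effectiveness via the translation isomorphism, taking $a_i + p$ as generators of the same orders and expressing an element by first computing its translate $a - p$ in $S$. The only difference is that you spell out the bookkeeping (and polynomiality) more explicitly than the paper's brief proof does.
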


\begin{proof}
The weak local effectiveness is obvious from the formulas. If $S$ is effective with generators $a_i$ of orders $q_i$, then $S_p$ is effective with generators the translates $a_i + p$ of the same orders $q_i$. An expression of $a$ as an integral combination in $S_p$ is obtained by translating to $S$, i.e.\ by computing the coefficients of $a - p$ as an integral combination of the $a_i$ in $S$.
\end{proof}

%
%
%
%

Before starting the proof of Theorem~\ref{thm:inductive_step}, we prove the full effectivity of the cohomology groups of an effective pair $(X, A)$. This will be the basic building stone.

\begin{lemma}\label{lem:fully_eff_cohlgy}
Let $(X,A)$ be an effective pair of diagrams. Let $c \colon A \to \Engen$ be a fixed computable map and make $\Kngen$ into a diagram under $A$ via $\delta c$. Then it is possible to equip $[X,\Kngen]^A$ with a structure of an effective Abelian group; the elements are represented by maps $X \to \Kngen	$ whose restriction to $A$ equals $\delta c$.
\end{lemma}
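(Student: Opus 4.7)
The plan is to reduce the statement to the effectivity of a relative Bredon cohomology group, which will follow from standard effective linear algebra applied to an effective cochain complex. Concretely, I will identify $[X, \Kngen]^A$ with $H^{n+1}(X, A; \pi)$ in an algorithmic way, compute the latter as an effective abelian group, and transport the structure back.

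First, by Proposition~\ref{prop:EML-map}, evaluation of maps on cells yields natural, computable-in-both-directions bijections $\map(X, \Engen) \cong C^n(X; \pi)$ and $\map(X, \Kngen) \cong Z^{n+1}(X; \pi)$, with analogous identifications for pairs. Since $c \colon A \to \Engen$ is a computable map, I can extend it to a cochain $\widetilde c \in C^n(X; \pi)$ by declaring it zero on all cells of $X$ not in $A$ (the list of such cells is provided by the effective pair structure on $(X,A)$); this gives a computable map $\widetilde c \colon X \to \Engen$, and the composite $\delta \widetilde c \colon X \to \Kngen$ is a particular element of the target set, providing a distinguished basepoint $z_0 \in [X, \Kngen]^A$. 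More importantly, it shows that the set is non-empty.

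Next, I will observe that subtraction by $\delta \widetilde c$ defines a bijection between maps $X \to \Kngen$ restricting to $\delta c$ on $A$ and relative cocycles $Z^{n+1}(X, A; \pi)$, and that two such maps are homotopic relative to $A$ precisely when the difference of the corresponding relative cocycles is the coboundary of a relative cochain in $C^n(X, A; \pi)$. This yields the bijection $[X, \Kngen]^A \cong H^{n+1}(X, A; \pi)$, which is computable in both directions: going forward, a map $z$ is sent to $[z - \delta \widetilde c]$; going backward, a relative cocycle class is lifted to a relative cocycle representative and added to $\delta \widetilde c$ to give a map $X \to \Kngen$ with the correct restriction to $A$.

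Finally, I will equip $H^{n+1}(X, A; \pi)$ with an effective abelian group structure: the cochain groups $C^*(X, A; \pi) = \Hom_{\AbI}(C_*(X, A), \pi)$ are effective abelian groups by the lemma following the definition of Bredon cohomology (using Lemma~\ref{lem:hom} and the effectivity of $C_*(X,A)$), the differential $\delta = \partial^*$ is computable, and hence $Z^{n+1}(X, A; \pi) = \ker \delta$ and $H^{n+1}(X, A; \pi) = \coker(\delta \colon C^n(X,A;\pi) \to Z^{n+1}(X,A;\pi))$ are effective by Lemma~\ref{l:ker_coker}. Transporting this effective group structure along the bijection above puts an effective abelian group structure on $[X, \Kngen]^A$ with zero element represented by the map $\delta \widetilde c$, using the ``Translating zero'' construction if a different representative is preferred. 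The only mildly subtle point is checking that the bijection with $H^{n+1}(X, A; \pi)$ is compatible with addition of homotopy classes (which on the cocycle side is ordinary addition), but this is immediate from the group structure on the simplicial abelian group $\Kngen$ being compatible with the cocycle identification; I do not expect any genuine obstacle beyond careful bookkeeping.
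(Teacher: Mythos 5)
Your proposal is correct and follows essentially the same route as the paper: extend $c$ to $\widetilde c$ by zero on cells outside $A$, translate by $\delta\widetilde c$ to reduce to the case $c=0$, identify $[X,\Kngen]^A$ with $H^{n+1}(X,A;\pi)$ via Proposition~\ref{prop:EML-map}, and obtain effectivity from the effective cochain groups using kernels and cokernels. The only cosmetic difference is that the paper phrases the extension of $c$ through Lemma~\ref{lem:En_contractible}, whose proof is exactly your cochain-extension argument.
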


\begin{proof}
According to Lemma~\ref{lem:En_contractible}, the set $[X, \Engen]^A$ has a single element, obtained by extending $c$ to a map $\widetilde c \colon X \to E(\pi, n)$ and, thus, there is a well defined element $[\delta \widetilde c] \in [X, \Kngen]^A$; it will serve as the zero of the group. Denoting the group from the statement temporarily by $[X, \Kngen]^{A, c}$ to stress the chosen map $c$, we have an isomorphism
\[[X, \Kngen]^{A, c} \cong [X, \Kngen]^{A, 0} ,\, [z] \mapsto [z - \delta \widetilde c],\]computable together with its inverse. We will thus assume from now on that $c=0$ and drop it again from the notation.

In this situation we have an isomorphism
\[[X, \Kngen]^A \cong H^{n+1}(X,A;\pi)\]
computable in both directions. Since the cochain complex
\[C^*(X,A;\pi)=\operatorname{Hom}(C_*(X,A),\pi)\]
clearly consists of effective Abelian groups and since these are closed under subgroups and quotients, the cohomology group is also effective.
\end{proof}

The proof of Theorem~\ref{thm:inductive_step} consists of two main steps:

\subsection{Computing the basepoint of $\boldsymbol{[X, \Pnew]^{A}}$}

Since the group $[X, \Pold]^A$ is effective, it is equipped with a zero $[o_{n-1}]$. We first solve the problem of choosing a zero $[o_n] \in [X, \Pnew]^A$.
\[\xymatrix{
A \ar[r]^-{\fn} \ar[d]_-{i} & \Pnew \ar[r]^-{\qn} \ar[d]^-{\pn} & \En \ar[d]^-{\deltan} \\
X \ar[r]_-{\onmo} & \Pold \ar[r]_-{\kn} & \Kn
}\]
Considering the set $[X, \Kn]^A$ of homotopy classes of maps whose restriction to $A$ equals $\deltan \qn \fn$, and equipping it with zero $[\kn\onmo]$, the map $\knst \colon [X, \Pold]^A \to [X, \Kn]^A$ becomes a computable homomorphism between effective Abelian groups. We remark that the zero $[\kn\onmo]$ is generally different from the natural zero exhibited in the proof of Lemma~\ref{lem:fully_eff_cohlgy} -- we will denote this natural zero by $0$.

According to Lemma~\ref{l:preimage}, it is possible to decide whether $0$ lies in the image of $\knst$ and compute some $o_{n-1}'$ such that $\knst [o_{n-1}'] = 0$. Then, using Proposition~\ref{prop:lift_ext_one_stage}, it is possible to lift $o_{n-1}' \colon X \to \Pold$ to a map $o_n \colon X \to \Pnew$ that will represent our new basepoint $[o_n] \in [X, \Pnew]^A$. If $0$ does not lie in the image of $\knst$ then $[X, \Pnew]^A$ is empty.

\subsection{Making $\boldsymbol{[X, \Pnew]^{A}}$ effective} \label{s:sections}

Having computed $[o_n]$, our general exact sequence~\eqref{eq:general_exact_sequence} becomes, under our assumptions, an exact sequence of Abelian groups, that can be easily transformed into a short exact sequence
\[\xymatrix{
0 \ar[r] & \coker\connhom_{\on} \ar[r]_-{\jnst} & [X,\Pnew]^A \ar[r]_-{\pnst} \POS[l]+R*{\vphantom{|}}="a";[]+L*{\vphantom{|}} \ar@<-5pt>@/_2pt/"a"_-\rho & \ker\knst \ar[r] \POS[l]+R*{\vphantom{|}}="a";[]+L*{\vphantom{|}} \ar@<-5pt>@/_2pt/"a"_-\sigma & 0.
}\]
By Lemma~\ref{l:ker_coker}, both $\coker\connhom_\on$ and $\ker\knst$ are effective. Since the indicated weak sections are induced by those of~\eqref{eq:general_exact_sequence}, Lemma~\ref{l:ses} applies and $[X,\Pnew]^A$ becomes effective.

\section{Effective homological algebra}\label{sec:effective}

\heading{Homologically effective diagrams}\label{sec:effdiagrams}
In this section, we define homologically effective diagrams of chain complexes and simplicial sets, introduced originally in the article \cite{Filakovsky2014} under the name of diagrams with effective homology, and describe several constructions with such diagrams. 

We begin by introducing reduction and strong equivalence of diagrams:
\begin{Def}
Let $\dC,\dC' \in \ChI$ be diagrams of chain complexes. A \emph{reduction} $\dC \Redu \dC'$  is a triple of natural transformations  $(\alpha,\beta,\eta)$
\[(\alpha,\beta,\eta) \colon \dC\Redu \dC'\quad\equiv\quad\xymatrix@C=30pt{
\dC \ar@(ul,dl)[]_{\eta} \ar@/^/[r]^\alpha & \dC' \ar@/^/[l]^{\beta}
}\]
such that $\alpha$ and $\beta$ are chain maps satisfying the following conditions:
\begin{equation}\label{eq:reduction}
\begin{array}{lll}
 \eta\beta = 0 & \alpha \eta = 0&  \eta\eta = 0  \\
\alpha \beta  = \id & \bo\eta + \eta \bo = \id - \beta\alpha&
\end{array}
\end{equation}
\end{Def}

One of the most important and well known examples of a reduction is the following, first given in~\cite{EilenbergMacLane:GroupsHPin1-1953,EilenbergMacLane:GroupsHPin2-1954}:
\begin{ex}[Eilenberg--Zilber reduction]\label{p:EZred}
Let $X,Y$ be simplicial sets. Then there is a reduction
\[
C_*(X \times Y) \Redu C_*(X) \otimes C_*(Y)
\]
\end{ex}

The operators in the reduction data can be computed using the acyclic models theorem as e.g.\ in \cite{may}, Chapter~28  and they are not unique. We will further use the reduction data presented in Theorem~2.1a of \cite{EilenbergMacLane:GroupsHPin1-1953}. An important observation is that the operators of the reduction data are based on the face and degeneracy maps which means that the reduction is functorial in simplicial sets, so that the above example extends to diagrams of simplicial sets.

\begin{Def}
A strong equivalence $\dC \steq \dC'$ of diagrams of chain complexes is defined as a span of reductions $\dC\lredu \widehat{\dC}\Redu \dC'$.
\end{Def}
Strong equivalences of diagrams can be composed as in the case of strong equivalences of chain complexes.

Given a category $\icat$, we denote by $\widetilde{\icat}$ the category with the same set of objects but with identity arrows only. There is an obvious inclusion $\widetilde{\icat} \to \icat$ and thus a diagram $X \colon \icat \to \ccat$ induces a diagram $\widetilde X \colon  \widetilde{\icat}\to \ccat$.

The following definition generalizes the concept of an object with effective homology (see~\cite{RubioSergeraert:ConstructiveAlgebraicTopology-2002}) to the context of diagrams:
\begin{Def}\label{def:effective}\hfill
\begin{itemize}

\item We say that a locally effective diagram of chain complexes $\dC$ is \emph{homologically effective} if there is given an effective diagram $C^{\ef} \in \ChI$ and a strong equivalence ${{\dC} \steq C^{\ef}}$.

\item We say that a locally effective diagram of chain complexes $\dC$ is \emph{pointwise homologically effective} if its restriction $\widetilde C \in \ChDiag{\widetilde\mcI}$ is homologically effective. Concretely, this consists of a collection of strong equivalences $C(i) \steq C^{\ef}(i)$ (not necessarily natural in $i \in \mcI$).

\item A locally effective diagram of simplicial sets $\dX \in \Diag$ is \emph{(pointwise) homologically effective} if $\dC_* (\dX)$ is (pointwise) homologically effective.
\end{itemize}
\end{Def}
As an important example, the diagram $WK(\dpi,\thedim)$, where $\dpi \in \AbI$ and $\thedim\ge 1$ is pointwise homologically effective, which follows from \cite[Theorem~3.16]{polypost}. In general, we aren't aware of an algorithm that would show it is homologically effective. However, in special cases such as when $WK(\dpi,\thedim)$ is cofibrant, we obtain homologically effective $WK(\dpi,\thedim)$ (see Proposition~\ref{prop:cofibrep}).
\subsection{Constructions with homologically effective diagrams}
We now introduce the standard results of homological perturbation theory in the context of diagrams of chain complexes. We will utilise them to prove that certain objects, such as cofibrant replacements, are homologically effective.
 
\begin{Def}
Let $C,C'\in \ChI$. Notice that the differential $\bo$ on $C$ can be seen as a natural transformation $C \to C [1]$ satisfying $\bo\bo = 0$. Here $C[1]$ is diagram of chain complexes $C$ with all the chain complexes moved up by one dimension.
We call a collection of maps $\delta : C \to C[1] $ \emph{perturbation} if the sum $\bo + \delta$ is also a differential.
\end{Def}

We now formulate the lemmas.
\begin{lem}[Easy Perturbation Lemma] \label{lem:epl}
Let $(\alpha,\beta,\eta) \col (C, \bo)\Ra  (C',\bo')$ be a reduction of diagrams of chain complexes. Suppose $\delta'$ is a perturbation of the differential $\bo'$. Then there is a reduction $(\alpha,\beta,\eta) \col (C, \bo + \beta \delta' \alpha) \Redu  (C',\bo' + \delta)$.
\end{lem}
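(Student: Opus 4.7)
The plan is to observe that the same triple $(\alpha,\beta,\eta)$ continues to serve as a reduction after perturbing $\partial'$ on $C'$ to $\partial' + \delta'$, provided we simultaneously perturb $\partial$ on $C$ to $\partial + \beta\delta'\alpha$. Since no new maps are introduced, the three identities $\eta\beta = 0$, $\alpha\eta = 0$, $\eta^2 = 0$ and the splitting identity $\alpha\beta = \id$ hold automatically (they do not involve any differential), and naturality of $\alpha$, $\beta$, $\eta$ is inherited. Naturality of the new differential on $C$ follows from the naturality of the reduction data and of $\delta'$ (the latter being a perturbation \emph{of a diagram}, hence a natural transformation).

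The first thing I would verify is that $\partial + \beta\delta'\alpha$ squares to zero. Expanding and using $\alpha\beta = \id$ together with $\partial\beta = \beta\partial'$ and $\alpha\partial = \partial'\alpha$, one gets
\[
(\partial + \beta\delta'\alpha)^2 = \beta\bigl(\partial'\delta' + \delta'\partial' + \delta'\delta'\bigr)\alpha = \beta\bigl((\partial'+\delta')^2 - (\partial')^2\bigr)\alpha = 0,
\]
where the last equality uses that $\partial'+\delta'$ is a differential by hypothesis.

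Next I would check the two chain-map conditions. For $\alpha$:
\[
\alpha(\partial + \beta\delta'\alpha) = \alpha\partial + \alpha\beta\delta'\alpha = \partial'\alpha + \delta'\alpha = (\partial'+\delta')\alpha,
\]
and symmetrically $(\partial + \beta\delta'\alpha)\beta = \beta(\partial'+\delta')$, again using $\alpha\beta = \id$ and the old chain-map identities. Finally, for the homotopy identity:
\[
(\partial + \beta\delta'\alpha)\eta + \eta(\partial + \beta\delta'\alpha) = (\partial\eta + \eta\partial) + \beta\delta'(\alpha\eta) + (\eta\beta)\delta'\alpha = \id - \beta\alpha,
\]
since $\alpha\eta = 0$ and $\eta\beta = 0$ kill the cross terms.

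There is no real obstacle here; the lemma is a direct algebraic verification, and the only point that distinguishes the diagram case from the standard one-complex case is the remark on naturality, which is immediate because the perturbed differential $\partial + \beta\delta'\alpha$ is built out of natural transformations. Thus the same data $(\alpha,\beta,\eta)$ constitutes the desired reduction $(C,\partial + \beta\delta'\alpha) \Redu (C',\partial'+\delta')$.
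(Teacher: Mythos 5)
Your proof is correct and follows essentially the same route as the paper: the paper's proof simply invokes the classical (pointwise) perturbation lemmas, noting that the reduction data is unchanged and given by compositions of natural transformations, so naturality is automatic. You have just carried out explicitly the standard verification that the same triple $(\alpha,\beta,\eta)$ satisfies the reduction identities for $(C,\partial+\beta\delta'\alpha)\Redu(C',\partial'+\delta')$, which is exactly the content the paper defers to the references.
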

\begin{lem}[Basic Perturbation Lemma] \label{lem:bpl}
Let $(\alpha,\beta,\eta) \col (C, \bo)\Redu  (C,\bo')$ be a reduction of diagrams of chain complexes. Suppose $\delta$ is a perturbation of the differential $\bo$ and further  for every $i\in \icat$ and every $c \in C (i)$ there is some $k \in \mathbb{N}$ such that we get  $(\eta\delta)^{k} (c) = 0$. Then there is a reduction of diagrams of chain complexes
 $(\alpha',\beta',\eta') \col (C, \bo + \delta) \Redu  (C' ,\bo' + \delta')$.
\end{lem}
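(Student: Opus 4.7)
My approach is to reduce the statement to the classical Basic Perturbation Lemma for single chain complexes due to Brown and Gugenheim, observing that all the constructions involved are natural in $i \in \mcI$. From the data $(\alpha,\beta,\eta,\delta)$, the Brown--Gugenheim formulas produce new reduction data $(\alpha',\beta',\eta',\delta')$ by universal expressions built from composition and a certain infinite series. Once the series is well-defined as a natural transformation, the reduction identities follow exactly as in the non-parametric case, because they are purely algebraic consequences of the reduction axioms, the relation $(\bo+\delta)^2=0$, and the telescoping property of the series.

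\textbf{The series and the new data.} The pointwise nilpotency hypothesis guarantees that, for every $i \in \icat$ and every $c \in C(i)$, the Neumann-type series
\[
\phi(c)\;:=\;\sum_{k\geq 0} (-1)^k (\eta\delta)^k(c)
\]
has only finitely many nonzero terms. Each partial sum is a natural transformation $C \to C$ because $\eta$ and $\delta$ are; since the limit exists pointwise, it is itself a natural transformation $\phi \colon C \to C$, satisfying the formal identity $\phi(1+\eta\delta)=1=(1+\eta\delta)\phi$. Using $\phi$, I then set
\[
\delta'\;:=\;\alpha\delta\phi\beta,\qquad \beta'\;:=\;\phi\beta,\qquad \eta'\;:=\;\phi\eta,\qquad \alpha'\;:=\;\alpha(1-\delta\phi\eta),
\]
each of which is a natural transformation between diagrams of chain complexes of the appropriate degree, since composition and sum of natural transformations are natural.

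\textbf{Verification.} It remains to check that $\delta'$ is a perturbation of $\bo'$ (i.e.\ $(\bo'+\delta')^2=0$) and that $(\alpha',\beta',\eta')\colon(C,\bo+\delta)\Redu(C',\bo'+\delta')$ is a reduction, i.e.\ that $\alpha'$ and $\beta'$ are chain maps together with the seven identities in \eqref{eq:reduction}. Each identity is a formal manipulation using the original reduction axioms, the relations $\phi(1+\eta\delta)=1=(1+\eta\delta)\phi$, and the hypothesis $(\bo+\delta)^2=0$. These are precisely the Brown--Gugenheim calculations, which transfer verbatim to our setting because every step involves only algebraic operations that respect naturality.

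\textbf{Main obstacle.} There is no essentially new difficulty beyond the classical lemma; the one thing needing attention is that $\phi$ makes sense as a natural transformation of diagrams, and this is exactly what the pointwise nilpotency hypothesis provides: it ensures termwise convergence at each object, and naturality of the limit is then automatic because partial sums of natural transformations are natural. In effect, the diagram version is the pointwise Basic Perturbation Lemma applied functorially in $i\in\icat$.
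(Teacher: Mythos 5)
Your proposal is correct and follows essentially the same route as the paper: the paper likewise invokes the classical basic perturbation lemma, noting that the new reduction data are given by explicit sums of compositions of $\alpha,\beta,\eta,\partial,\delta$ and are therefore natural transformations, so the result holds for diagrams. You simply make this explicit by writing out the Brown--Gugenheim series $\phi$ and the standard formulas for $\alpha',\beta',\eta',\delta'$, with the pointwise nilpotency hypothesis ensuring the series is a well-defined natural transformation.
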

Proof of both lemmas follows directly from the original classical perturbation lemmas (see e.g.\ \cite{RubioSergeraert:ConstructiveAlgebraicTopology-2002}), as there is a concrete desription of new reduction data in terms of sums of compositions of $\alpha, \beta, \eta, \bo, \delta$. As we assume these are natural transformations, the resulting reduction data will consist of natural transformations as well. 
 \begin{lem} \label{lem:product}
 \
\begin{enumerate}
\item Let $C\in\ChI$,  $D\in\ChJ$ be homologically effective. Then so is $C \mathbin{\widehat{\otimes}} D \in \ChDiag{(\mcI \times \mcJ)}$, given by $C \mathbin{\widehat{\otimes}} D (i,j) = C(i) \otimes D(j)$.

In particular, the tensor product of a homologically effective diagram of chain complexes with a homologically effective chain complex is a homologically effective diagram of chain complexes.
\item Let $C,D \in \ChI$ be homologically effective. Then so is $C \oplus D \in \ChI$.
\item Let $C,D \in \ChI$ be homologically effective and $f\colon C \to D$ computable. Then the mapping cylinder $M_f \in \ChI$ is homologically effective.
\end{enumerate}
\end{lem}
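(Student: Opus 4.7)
My plan is to prove the three parts separately by exhibiting strong equivalences whose reduction data are built from the given ones by classical homological-algebra operations. Since all these operations are natural in the input reduction data, they automatically preserve naturality, so the resulting operators assemble as natural transformations over the appropriate index category.

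For (1), I will use the standard tensor product of reductions: given $(\alpha,\beta,\eta)\colon C\Redu C'$ and $(\alpha',\beta',\eta')\colon D\Redu D'$ one obtains the reduction $(\alpha\otimes\alpha',\,\beta\otimes\beta',\,\eta\otimes\id+\beta\alpha\otimes\eta')\colon C\otimes D\Redu C'\otimes D'$. Applying this construction pair-by-pair in $(i,j)\in\mcI\times\mcJ$ to the strong equivalences $C\lredu\widehat C\Redu C^{\ef}$ and $D\lredu\widehat D\Redu D^{\ef}$ produces a strong equivalence $C\mathbin{\widehat{\otimes}}D\lredu \widehat C\mathbin{\widehat{\otimes}}\widehat D\Redu C^{\ef}\mathbin{\widehat{\otimes}}D^{\ef}$ of diagrams over $\mcI\times\mcJ$. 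The target is effective, since if $C^{\ef}$ and $D^{\ef}$ are cellular with cells $\{c_\alpha\}$ at $i_\alpha\in\mcI$ and $\{d_\gamma\}$ at $j_\gamma\in\mcJ$, then $C^{\ef}\mathbin{\widehat\otimes}D^{\ef}$ is cellular with cells $\{c_\alpha\otimes d_\gamma\}$ at $(i_\alpha,j_\gamma)$, and the uniqueness of expression required by Proposition~\ref{prop:cellular_diagrams_complexes} is inherited factor by factor. For (2), the direct sum of two reductions is trivially a reduction, with data $(\alpha\oplus\alpha',\,\beta\oplus\beta',\,\eta\oplus\eta')$, and the direct sum of effective diagrams is effective by concatenating the lists of cells and their associated algorithms.

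The interesting case is (3), which I will handle via the Basic Perturbation Lemma~\ref{lem:bpl}. On underlying graded diagrams, $M_f\cong C\oplus C[1]\oplus D$; taking the direct-sum differential (i.e.\ setting $f=0$) turns this into $\cone(\id_C)\oplus D$, which by (1) and (2) is homologically effective and which admits the natural reduction $\cone(\id_C)\oplus D\Redu D$ obtained by projecting away the acyclic cone summand. Restoring the $f$-contribution amounts to adding a perturbation $\delta$ supported on the map $C[1]\to D$ that sends the shifted generator of $c$ to $f(c)$ and vanishes elsewhere. Since the contracting homotopy $\eta$ of the reduction lives on the cone summand and is zero on $D$, the composite $\eta\delta$ is identically zero, so the nilpotency hypothesis of the Basic Perturbation Lemma is trivially satisfied; it yields a reduction $M_f\Redu D$ whose induced perturbation on $D$ is $\alpha\delta\beta=0$. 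Composing with the strong equivalence $D\steq D^{\ef}$ then gives the required strong equivalence for $M_f$. I expect the only obstacle to be careful sign bookkeeping for the mapping-cylinder differential and for the tensor-product reduction formula, but no genuinely new ideas are needed beyond the classical perturbation-theoretic constructions.
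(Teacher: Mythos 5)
Your proposal is correct. Parts (1) and (2) are essentially the paper's own argument made explicit: the paper simply notes that strong equivalences are closed under tensor products (with the cells of $C^\ef\mathbin{\widehat\otimes}D^\ef$ being tensor products of cells) and that the direct sum case is trivial; your explicit reduction formulas, and the identification of the product cells via $\bbZ\mcI(-,i)\otimes\bbZ\mcJ(-,j)\cong\bbZ(\mcI\times\mcJ)(-,(i,j))$, fill in exactly what is left implicit there.

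For part (3) you take a genuinely different route. The paper disposes of the mapping cylinder by citing \cite[Proposition~3.8]{polypost}, whose underlying argument (a variant appears, for the mapping cone, in a commented-out proof in the source) perturbs the strong equivalence $C[1]\oplus D \steq C^\ef[1]\oplus D^\ef$ coming from \emph{both} given strong equivalences, and must verify the nilpotency hypothesis of the Basic Perturbation Lemma with respect to the homotopy operators of those equivalences. You instead perturb the elementary reduction $\cone(\id_C)\oplus D \Redu D$, where the contraction lives entirely on the acyclic cone summand, so $\eta\delta=0$, nilpotency is trivial, and the induced perturbation on $D$ vanishes because $\delta\beta=0$; composing the resulting reduction $M_f\Redu D$ with $D\steq D^\ef$ finishes the proof. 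Your argument is more self-contained and even shows that homological effectiveness of $C$ is not needed for the cylinder --- only computability of $f$, local effectiveness of $C$, and effectiveness of $D^\ef$ enter. The trade-off is robustness: the paper's perturbation of $C[1]\oplus D$ adapts verbatim to the mapping \emph{cone} (which is what the Postnikov-tower algorithm in Section~\ref{sec:postnikov-proof} actually consumes, and where the homology is genuinely relative), whereas your retraction-onto-$D$ shortcut is special to the cylinder and does not extend to the cone. As stated, though, the lemma concerns the cylinder, and your proof establishes it; the naturality of all operators over $\mcI$ (and of the BPL output) is guaranteed by the same observation the paper makes after Lemmas~\ref{lem:epl} and~\ref{lem:bpl}.
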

\begin{proof}
%
In the first point, the strong equivalences are closed under the tensor product, so that we have $C \mathbin{\widehat\otimes} D \steq C^\ef \mathbin{\widehat\otimes} D^\ef$ and the right hand side $C^\ef \mathbin{\widehat\otimes} D^\ef$ is effective with cells the tensor products of the cells of $C^\ef$ and of $D^\ef$.

The special case in the first point is obtained by taking $\mcJ$ to be the trivial one-object category.

The second point is trivial and the final claim follows from \cite[Proposition~3.8]{polypost}.
\end{proof}

In what follows we are going to apply a general lemma about filtered diagrams of chain complexes. Let  $C \in \ChI$. We consider a filtration $F$ on diagram $C$ of chain complexes: 
\[
0 = F_{-1} C \subseteq F_0 C  \subseteq F_1 C  \subseteq \cdots
\]
such that $C = \bigcup_k F_k C$. We further assume that each $F_k C$ is a cellular subcomplex i.e.\ it is generated by a subset of the given basis of $C$ and that the filtration is locally finite i.e.\ for each $n$ we have $F_k C_n = C_n$ for $k \gg 0$.

\begin{lem}[\cite{aslep}, Lemma 7.3] \label{lem:main}
Let $C \in \ChI$ be a diagram of chain complexes with filtration $F$ satisfying properties as above. If each filtration quotient $G_k C = F_k C / F_{k-1} C$ is homologically effective then so is $C$.
\end{lem}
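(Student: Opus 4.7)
The plan is to realize $C$ as a perturbation of the split diagram $\bigoplus_k G_k C$ and then transport a strong equivalence from this split model to $C$ by applying the perturbation lemmas. Because each $F_k C$ is a cellular subcomplex of $C$, the cells of $C$ lying in $F_k C \smallsetminus F_{k-1} C$ descend to a basis of $G_k C$; collecting these over all $k$ furnishes a natural graded-module isomorphism $\Phi \colon C \xra\cong \bigoplus_k G_k C$, which is typically \emph{not} a chain map. Transporting $\partial_C$ through $\Phi$ produces a differential of the form $\partial_0 + \delta$, where $\partial_0 = \bigoplus_k \partial_{G_k C}$ is the diagonal differential and $\delta$ is a natural perturbation which strictly lowers filtration degree, in the sense that $\delta(G_k C) \subseteq \bigoplus_{j<k} G_j C$.

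Next, I would show that the split diagram $(\bigoplus_k G_k C,\partial_0)$ is homologically effective. By hypothesis each $G_k C$ comes equipped with a strong equivalence $G_k C \lredu \widehat{G_k C} \Redu (G_k C)^\ef$. Taking direct sums dimension by dimension yields a span of reductions $\bigoplus_k G_k C \lredu \bigoplus_k \widehat{G_k C} \Redu \bigoplus_k (G_k C)^\ef$. Local finiteness of the filtration is precisely what legitimises this: in each chain degree only finitely many summands contribute, so the right hand side is a genuine effective diagram with algorithmically enumerable cells, and the reduction data assembles into natural transformations of diagrams of chain complexes.

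Finally, I would apply the Easy Perturbation Lemma (Lemma~\ref{lem:epl}) to lift $\delta$ to a perturbation $\widetilde\delta = \beta_L \delta \alpha_L$ of the differential on $\bigoplus_k \widehat{G_k C}$ through the left reduction, and then the Basic Perturbation Lemma (Lemma~\ref{lem:bpl}) to project $\widetilde\delta$ to a perturbation $\delta^\ef$ on $\bigoplus_k (G_k C)^\ef$ through the right reduction. This produces a strong equivalence
\[C \cong \Bigl(\bigoplus\nolimits_k G_k C,\, \partial_0 + \delta\Bigr) \steq \Bigl(\bigoplus\nolimits_k (G_k C)^\ef,\, \partial_0^\ef + \delta^\ef\Bigr),\]
whose right hand side remains effective, since perturbing the differential does not alter the underlying cellular structure.

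The main obstacle is the pointwise nilpotency hypothesis needed to invoke Lemma~\ref{lem:bpl}. Here one uses the structural fact that the reduction maps $\alpha_L,\beta_L$ and the homotopy $\eta_R$ all respect the direct sum decomposition, whereas $\delta$ strictly lowers filtration degree; hence $\widetilde\delta$ does as well, and for any chain $c$ whose image under $\Phi$ lies in $F_k C$ one obtains $(\eta_R \widetilde\delta)^{k+1}(c) = 0$. Local finiteness, which gives every chain a well-defined maximal filtration index, is what makes this nilpotency uniform enough for the lemma to apply.
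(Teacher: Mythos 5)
Your proposal is correct and follows essentially the same route as the paper: identify $C$ (via its cellular basis) with the split complex $\bigoplus_k G_k C$ up to a filtration-decreasing perturbation of the differential, assemble the strong equivalences of the $G_k C$ into one for the direct sum using local finiteness, and transport the perturbation through the span by the Easy and Basic Perturbation Lemmas, with nilpotency guaranteed because the perturbation lowers filtration degree while the homotopy operator preserves it. Your write-up simply makes explicit the details the paper's proof leaves terse.
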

\begin{proof}
We define $G = \bigoplus\limits_{k \geq 0} G_k$. The sum is \emph{locally finite}: By the properties of $F$, we get that $G_k C_n = 0$ for $k \gg 0$. Thus for each $n$, we get a direct sum of homologically effective diagrams $G_k C_n \in \ChI$ and it follows that $G$ is homologically effective.

The diagram $C$ can be seen as a perturbation of $G$. This perturbation decreases the filtration degree. If we take a direct sum of given strong equivalences ${G_k \lredu \widehat{G_k}\Redu G_k ^\ef}$, we obtain a strong equivalence $G  \lredu \widehat{G}\Redu G ^\ef$. All the chain complexes are equipped with a filtration degree. Since the perturbation on $G$ decreases the filtration degree, while the homotopy operator preserves it, we can apply the perturbation 
lemmas \ref{lem:epl}, \ref{lem:bpl} to obtain a strong equivalence $C  \lredu \widehat{C}\Redu C ^\ef$. \end{proof}

The main application of the lemma is in the proof of the following result:
\begin{proposition}[Theorem 1.3, Proposition 1.2 in \cite{Filakovsky2014}]\label{prop:cofibrep} \noindent
Let $X \in \Diag$ be a pointwise homologically effective diagram. Then its Bousfield-Kan cofibrant replacement $X^\cofr$ is homologically effective.
\end{proposition}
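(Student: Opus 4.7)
The plan is to equip $X^\cofr = |BX|$ with its bar-length filtration and invoke Lemma~\ref{lem:main}. Concretely, I would let $F_n X^\cofr \subseteq X^\cofr$ be the subdiagram generated by those cells $(t, x, f_0, \ldots, f_{m-1}, \id)$ whose chain of non-identity morphisms has length $m \leq n$; this is the geometric realization of the $n$-skeleton of $BX$ taken in the bar direction. Since a non-degenerate pair $(t,x) \in \Delta^n \times X(i_0)$ not lying in $\partial\Delta^n \times X(i_0)$ necessarily has $t$ a degeneracy of the top simplex $\iota_n$, every cell at bar level $n$ has total simplicial dimension at least $n$. In particular, $F_d C_d(X^\cofr) = C_d(X^\cofr)$ for every $d$, so this filtration satisfies the local finiteness hypothesis of Lemma~\ref{lem:main}.

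Next, I would identify the filtration quotient $G_n C_*(X^\cofr) = F_n C_*(X^\cofr) / F_{n-1} C_*(X^\cofr)$. The standard skeletal pushout formula for a simplicial object, together with the observation that the non-degenerate $n$-simplices of $BX$ in the bar direction, evaluated at $k \in \mcI$, are precisely
\[\coprod_{i_0 \xla{f_0} \cdots \xla{f_{n-1}} i_n} X(i_0) \times \mcI(k, i_n),\]
with the coproduct indexed by chains of non-identity morphisms of length $n$, yields a natural isomorphism of diagrams of chain complexes
\[G_n C_*(X^\cofr) \;\cong\; C_*(\Delta^n, \partial\Delta^n) \;\otimes\; \bigoplus_{i_0 \xla{f_0} \cdots \xla{f_{n-1}} i_n} C_*(X(i_0)) \;\otimes\; \bbZ\mcI(-, i_n).\]

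Each constituent of a summand is already homologically effective: $C_*(\Delta^n, \partial\Delta^n)$ is a two-term free chain complex, hence trivially effective; $C_*(X(i_0))$ is homologically effective by the pointwise hypothesis on $X$; and $\bbZ\mcI(-, i_n)$ is a free representable diagram, hence an effective diagram of chain complexes. Lemma~\ref{lem:product}(1), applied iteratively to tensor a homologically effective diagram with a homologically effective chain complex, produces a homologically effective diagram from each summand, and Lemma~\ref{lem:product}(2) together with finiteness of $\mcI$ (so that chains of length $n$ form a finite indexing set) gives the same for the direct sum. Thus every $G_n$ is homologically effective, and Lemma~\ref{lem:main} delivers a strong equivalence from $C_*(X^\cofr)$ to an effective diagram, which by definition means $X^\cofr$ is homologically effective.

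The bulk of the work is the precise identification of $G_n$ in the second paragraph: one must carefully bookkeep degenerate vs.\ non-degenerate simplices in the bi-simplicial object $BX$, check that the identifications across $\partial\Delta^n$ assemble into the relative complex $C_*(\Delta^n, \partial\Delta^n)$, and verify naturality in $\mcI$ (which is what keeps the strong equivalence living on the level of diagrams rather than merely of chain complexes). Once this combinatorial identification is done, the remainder is a routine application of the homological-algebra lemmas already developed in this section.
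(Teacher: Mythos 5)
Your overall strategy is the same as the paper's: filter $X^\cofr$ by the skeleta in the bar (nerve) direction, identify the filtration quotients, show they are homologically effective via Lemma~\ref{lem:product}, and conclude with Lemma~\ref{lem:main}. However, the key identification in your second paragraph is stated incorrectly, and as stated it would fail: there is \emph{no} isomorphism of diagrams of chain complexes
\[G_n C_*(X^\cofr) \;\cong\; C_*(\stdsimp{n}, \partial\stdsimp{n}) \otimes \bigoplus C_*(X(i_0)) \otimes \bbZ\mcI(-, i_n),\]
because the normalized chains of a product are not isomorphic to the tensor product of normalized chains (already $C_2(\Delta^1 \times \Delta^1)$ has rank $2$, while $(C_*(\Delta^1) \otimes C_*(\Delta^1))_2$ has rank $1$; the same discrepancy survives in the relative quotient you are computing). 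What the skeletal pushout actually gives is
\[G_n C_*(X^\cofr) \;\cong\; \bigoplus_{i_0 \la \cdots \la i_n \text{ nondeg.}} C_*\bigl(\stdsimp{n} \times X(i_0) \times \mcI(-,i_n),\ \partial\stdsimp{n} \times X(i_0) \times \mcI(-,i_n)\bigr),\]
and the passage from this to the tensor-product form requires the Eilenberg--Zilber reduction (Example~\ref{p:EZred}), whose operators are natural in the simplicial variables and therefore yield a reduction of \emph{diagrams}. This is the ingredient missing from your argument; the ``careful bookkeeping of degenerate vs.\ non-degenerate simplices'' you defer to cannot produce the claimed isomorphism.

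The good news is that the gap is local and the fix is exactly what the paper does: since homological effectiveness only asks for a strong equivalence (not an isomorphism) with an effective diagram, it suffices to have the reduction $G_n C_*(X^\cofr) \Redu \bigoplus C_*(\stdsimp{n},\partial\stdsimp{n}) \otimes C_*(X(i_0)) \otimes \bbZ\mcI(-,i_n)$, compose it with the strong equivalences coming from the pointwise hypothesis via Lemma~\ref{lem:product}, and then feed the resulting homologically effective $G_n$ into Lemma~\ref{lem:main}. (Only the discrete factor $\bbZ\mcI(-,i_n)$ splits off on the nose; the $\stdsimp{n}$ and $X(i_0)$ factors do not.) With that correction, your proof coincides with the paper's.
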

\begin{proof}
We remind that for any category $\icat$ there is a simplicial set $N \icat$, the nerve of $\icat$. The simplicial set $N \icat$ can be seen as a homotopy colimit of the diagram consisting of points. Then there is a projection  $q \colon X^\cofr \to N \icat$ given as a projection onto
\[
\bigsqcup_{n\geq 0 \quad i_0, \cdots , i_n} \stdsimp{n} \times \icat (i_1, i_0) \times \cdots \times \icat (i_n,i_{n-1})/ {\sim}
\]
and we define the skeleton of $ X^\cofr$:
\[
\sk_{k} X^\cofr = q^{-1} (sk_{k} N \icat).
\]

We want to use Lemma~\ref{lem:main} to prove that the diagram $C_*(X^\cofr) \in \ChI$ is homologically effective. Therefore, we first have to introduce a filtration $F$ on the diagram of chain complexes $C_*(X^\cofr)$. We define $F$ as follows:
\[
F_k C_* (X^\cofr) = C_* ( \sk_{k} X^\cofr)
\]
Denoting $G_k = F_k / F_{k-1}$, we get
\[
G_k (C_* (X^\cofr)) = \bigoplus\limits_{\substack{i_0\la \cdots \la i_k \\ \text{ nondeg.} }}  C_* (\stdsimp{k} \times X(i_0)  \times \icat(-, i_k) , \bo \stdsimp{k} \times X(i_0)  \times \icat(-, i_k)) . \\
\]
The sum is taken over chains of morphisms in $\icat$ that do not contain identity as those are cancelled out when taking the quotient $G_k = F_k/ F_{k-1}$. By the finiteness of $\icat$, the number of nondegenerate chains of morphisms of length $k$ is finite, so the sum is finite.

The Eilenberg--Zilber reduction yields in this case
\begin{equation} \label{eq:formula}
 G_k (C_* (X^\cofr)) \Redu \bigoplus\limits_{\substack{i_0\la \cdots \la i_k \\ \text{ nondeg.}}}  C_* (\stdsimp{k}, \bo \stdsimp{k}) \otimes C_*(X(i_0))  \otimes \mathbb{Z}\icat (-, i_k)
\end{equation}
By definition, $\mathbb{Z}\icat (-, i_k) \in \ChI$ is effective and so is $C_*(\stdsimp{k}, \bo \stdsimp{k}) \in \Ch$ with a single generator in dimension $k$. By assumption, $C_*(X(i_0)) \in \Ch$ is homologically effective and thus, by Lemma~\ref{lem:product}, so is each summand in~\eqref{eq:formula}:
\[C_* (\stdsimp{k}, \bo \stdsimp{k}) \otimes C_*(X(i_0))  \otimes \mathbb{Z}\icat (-, i_k) \in \ChI\]
The direct sum is then homologically effective too, making $G_k (C_* (X^\cofr))$ itself homologically effective. Now we can apply Lemma~\ref{lem:main} to complete the proof.
\end{proof}

In order to construct Postnikov invariants in the Postnikov tower of a diagram $\dY$, the following proposition will be used. Before the statement itself, we define the diagram of cycles $\dZ$: Given an effective diagram of chain complexes $\dC\in \ChI$, there is a diagram of cycles $\dZ_\thedimm \in \AbI$ such that $\dZ_\thedimm (i)$ is the subgroup of cycles in $\dC_\thedimm(i)$.

\begin{prop}\label{prop:gen}
Let $\dC \in \ChI$ be an effective diagram of chain complexes such that $\dH_\thedimm (\dC) = 0$ for $\thedimm \leq n$. Then there is a (computable) retraction $r \colon \dC_{\thedim+1} \to \dZ_{\thedim+1}$ i.e.\ a homomorphism that restricts to the identity on $\dZ_{\thedim+1}$.
\end{prop}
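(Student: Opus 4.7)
The plan is to build a partial chain contraction of $\dC$ in degrees $\le n$---that is, natural transformations $h_k \colon \dC_k \to \dC_{k+1}$ of diagrams of abelian groups, for $k = 0, \ldots, n$, satisfying the identity $\partial h_k + h_{k-1} \partial = \id_{\dC_k}$ (with the convention $h_{-1} := 0$)---and then set
\[
r(c) := c - h_n(\partial c) \qquad \text{for } c \in \dC_{n+1}.
\]
A short computation using $\partial^2 = 0$ and the contraction identity (applied to $\partial c \in \dC_n$) shows that $\partial r(c) = \partial c - \partial c = 0$, so $r$ takes values in $\dZ_{n+1}$, while $r(z) = z - h_n(0) = z$ for $z \in \dZ_{n+1}$, giving the desired retraction. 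Naturality of $r$ is inherited from that of $h_n$ and $\partial$.

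The maps $h_k$ are constructed by induction on $k$. Because $\dC$ is cellular as a diagram of chain complexes (Proposition~\ref{prop:cellular_diagrams_complexes}), $\dC_k$ is a direct sum of representables $\mathbb{Z}\mcI(-, i_\alpha)$ indexed by the $k$-dimensional cells; hence, by the additive Yoneda lemma, a natural transformation out of $\dC_k$ is determined freely by picking, for each cell $c_\alpha$, an element in the codomain at $i_\alpha$, and no naturality constraints remain to be checked. In the inductive step, for a $k$-cell $c_\alpha \in \dC_k(i_\alpha)$, the chain $c_\alpha - h_{k-1}(\partial c_\alpha)$ is a cycle---this is a direct check from the previous instance of the contraction identity together with $\partial^2 = 0$---and the hypothesis $\dH_k(\dC) = 0$, interpreted pointwise, forces it to be a boundary. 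I then select any preimage $\xi_\alpha \in \dC_{k+1}(i_\alpha)$ with $\partial \xi_\alpha = c_\alpha - h_{k-1}(\partial c_\alpha)$ and set $h_k(c_\alpha) := \xi_\alpha$. The base case $k = 0$ fits the same template, since $\dH_0(\dC) = 0$ forces every element of $\dC_0$ to be a boundary.

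Computability is where the effectivity hypotheses enter. Since $\mcI$ is finite and the list of cells in each dimension is finite, every group $\dC_\ell(i)$ is a finitely generated free abelian group with an explicit basis, and $\partial$ is a computable homomorphism. Selecting the preimage $\xi_\alpha$ therefore reduces to a finite linear-algebra problem over $\mathbb{Z}$ that can be solved by standard means such as Smith normal form, with solvability guaranteed by the homology vanishing. The main---and only very minor---obstacle is the bookkeeping: verifying once and for all that the inductively produced chains $c_\alpha - h_{k-1}(\partial c_\alpha)$ are indeed cycles, so that the induction can continue; everything else, including the fact that values on cells extend uniquely to natural transformations on the whole diagram, is handled by the cellular structure.
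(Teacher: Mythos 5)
Your proposal is correct and follows essentially the same route as the paper: both construct, inductively on the cells of the effective (cellular) diagram, a contraction $h_k$ in degrees $k \leq n$ by solving $\partial h_k(c_\alpha) = c_\alpha - h_{k-1}(\partial c_\alpha)$ via Smith normal form (the right-hand side being a cycle, hence a boundary by the vanishing of homology), and then set $r = \id - h_n\partial$. The only cosmetic difference is that you spell out explicitly the freeness/Yoneda argument ensuring values on cells extend to natural transformations, which the paper leaves implicit.
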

\begin{proof}
The proof is a straightforward generalization of the proof of Proposition 2.12 in~\cite{aslep}. We will compute inductively a contraction $\s \colon \dC_{\thedimm} \to \dC_{\thedimm+1}$, for $\thedimm \leq \thedim$, i.e.\ a map satisfying $\bo \s + \s \bo = \id$ and we use it to split off the cycles, namely we set $r = \id - \s \bo$.

Since $C$ is effective, the cells $e_\alpha$ form a set of free generators of $C_k \in \AbI$. Thus, we only need to compute $\s(e_\alpha)$ so that
\[\bo \s (e_\alpha) + \s \bo (e_\alpha) = e_\alpha\]
i.e.\ $\bo \s (e_\alpha) = e_\alpha - \s \bo (e_\alpha)$. This $\s(e_\alpha)$ is computed by a Smith normal form algorithm (or the more general Lemma~\ref{l:preimage}), provided that it exists; since $B_k(C) = Z_k(C)$ by assumption, we only need to verify the following (by a very easy argument using induction hypothesis)
\[\bo(e_\alpha - \s \bo (e_\alpha)) = 0.\qedhere\]
\end{proof}

\subsection{Representing a map of diagrams by an effective cocycle}
In the Postnikov system algorithm, we will encounter the
following situation: We consider a homologically effective diagram $\dX \in \sSetI$, so that there is given a strong equivalence $\dC_*(\dX)$ \mbox{$\steq$} $\dC^{\ef}_*(\dX)$ to an effective diagram $\dC^{\ef}_*(\dX)$. Let  $f \colon \dC_*(\dX) \to \dC^{\ef}_*(\dX)$ be the composite (natural) map in the strong equivalence.

Let us also consider a $(\thedimm+1)$-cocycle 
\[
\psi^{\ef}\in Z^{\thedimm+1}(\Hom (\dC^{\ef} _*  (\dX), \dpi)) = {Z^{\thedimm+1}_{\ef}}(\dX; \dpi)\]
for some diagram of effective Abelian groups $\dpi$. The superscript ``ef'' emphasise that the cocycle belongs
to the ``effective'' cochain complex ${C^{*} _{\ef}}  (\dX; \dpi)$ obtained from the effective diagram $\dC^{\ef}_*(\dX)$ associated to~$\dX$. Then $\psi^\ef$ can be represented by
a system of finite matrices, since it can be seen as a collection of maps from chain groups $ \dC_{\thedimm+1} ^{\ef}(\dX)$ of finite rank into~$\dpi(i)$, $i\in \icat$.

The composition $\psi=f \psi^\ef \colon C_*(X) \to C^{\ef}_*(X) \to \pi$ is then also a cocycle and thus corresponds to a simplicial map $\widehat\psi \colon \dX\to \overline WK(\dpi,\thedimm)$.

In the construction of Postnikov systems, we will encounter the following situation: We are given
a diagram of simplicial sets $\dP\in \Diag$, plus a mapping
$f \colon \dP\to \overline WK(\dpi,\thedim)$, for some diagram of Abelian groups $\dpi \in \AbI$
and a fixed~$\thedim\ge 1$. Now we define a diagram $\dQ\in \Diag$ as the \emph{pullback}
according to the following commutative diagram:
\[
\xymatrix{
\dQ \ar[r] \ar[d] &  WK(\dpi,\thedim) \ar[d]^{\cobo}\\
\dP\ar[r]^-{f} & \overline WK(\dpi,\thedim)
}
\]

A result from~\cite{polypost}, then gives the following
\begin{corollary}[Corollary~3.18 in \cite{polypost}]\label{c:pullback} Given $\dpi, \thedim, \dP, f$ as above, where $\dpi$ is an effective diagram of Abelian groups, the diagram $\dP$ is pointwise homologically effective, and $f$ is computable, the pullback diagram $\dQ$ is pointwise homologically effective.
\end{corollary}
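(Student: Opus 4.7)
The plan is to reduce the statement to its non-equivariant counterpart, namely Corollary 3.18 of \cite{polypost}, by working at each object of $\mcI$ separately. The key observation is that pointwise homological effectiveness of a diagram is, by definition (see Definition~\ref{def:effective}), just the existence of a strong equivalence $C_*(\dQ(i)) \steq C^\ef(i)$ for each object $i \in \mcI$ individually, with no naturality requirement. Thus it suffices to check this pointwise.

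First, I would observe that limits in the functor category $\Diag$ are computed pointwise, so for each fixed $i \in \mcI$ the pullback square restricts to an ordinary (non-equivariant) pullback
\[\xymatrix{
\dQ(i) \ar[r] \ar[d] & WK(\dpi(i),\thedim) \ar[d]^{\cobo} \\
\dP(i) \ar[r]^-{f(i)} & \overline WK(\dpi(i),\thedim).
}\]
By hypothesis, $\dP(i)$ is homologically effective, $\dpi(i)$ is an effective abelian group, and the map $f(i)$ is computable (as the $i$-th component of a computable natural transformation). These are precisely the assumptions required by Corollary 3.18 of \cite{polypost}, which then delivers a homologically effective structure on $\dQ(i)$.

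Assembling these pointwise structures provides the desired pointwise homologically effective structure on $\dQ$. I do not expect any serious obstacle here, as the only thing to verify is that each input to the non-equivariant theorem is available at each object $i$; the computability of $f(i)$ from $f$ and the computability of $\dpi(i)$ and $\dP(i)$ at each $i$ are immediate from our definitions of computable natural transformation, effective diagram of abelian groups, and pointwise homologically effective diagram, respectively. Running time is likewise inherited pointwise: if the inputs form a polynomial time construction in the sense of Definition~\ref{d:construction}, then so does the output, because the non-equivariant construction of \cite{polypost} is itself polynomial time and is invoked once per object $i \in \mcI$ for the finite index category $\mcI$.
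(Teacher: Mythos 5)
Your proposal is correct and is essentially the paper's own argument: the paper gives no independent proof but simply invokes Corollary~3.18 of \cite{polypost} objectwise, which is legitimate exactly for the reasons you state --- pointwise homological effectiveness requires no naturality of the strong equivalences, pullbacks in $\sSetI$ are computed at each object, and polynomiality is inherited since $\mcI$ is finite. Nothing further is needed.
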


\subsection{Computational complexity}
As explained, for the running time analysis we need the polynomial time versions of the above results, i.e\ we view them as suitable polynomial time constructions. The majority of these claims are straightforward generalizations of the results given in \cite[Section 3]{polypost}.
\begin{itemize}
\item Perturbations lemmas: The polynomial time version of Lemma~\ref{lem:bpl}, requires a stronger nipotency condition: for every $k \geq 0$, there exist some $N(k) \geq 0$ such that $(\eta\delta)^{N(k)}(x) = 0$ for all $x\in C_k$. Under this condition, if the input data (chain complex and reduction) are polynomial time, then so is the output chain complex and reduction data. This further implies a polynomial time version of Lemma~\ref{lem:main}.
\item Polynomial time versions of Lemma~\ref{lem:product} and Corollary~\ref{c:pullback} are obtained in the same way as in ~\cite[Section 3]{polypost}, from which Proposition~\ref{prop:cofibrep} follows.
\item A polynomial time version of Proposition~\ref{prop:gen} follows from the fact that there is a polynomial time algorithm computing the Smith normal form.
\end{itemize}

\section{Postnikov tower for diagrams}\label{sec:postnikov-proof}
In this section, we formally define an algorithmic (homologically effective) version of the Postnikov system of a diagram $Y$ which is used for computations in this paper. Then we describe an algorithm that produces the algorithmic version of the tower in case $Y$ is pointwise homologically effective and $1$-connected. Formally, we first state the existence of such algorihm in Theorem~\ref{t:postnikov-restat}. The proof is then given by first describing the algorithm and then proving its correctness. The algorithm was originally presented in first author's thesis~\cite{Filakovsky2016thesis}. Here, we give a slightly shorter version that covers the most important points of the construction.


\begin{definition}
Let $Y \in \Diag$ be a pointwise homologically effective diagram, $\thedim \in \bbN$. We say that the $n$-stage Postnikov system (tower) for $Y$ is \emph{pointwise homologically effective} if the following is provided:
\begin{itemize}
\item Pointwise homologically effective diagrams $\Pst{0}, \Pst{1},\ldots,\Pnewst \in \Diag$.
\item Effective diagrams of Abelian groups $\dpi_\thedimmm(\dY)$  representing the homotopy groups of $Y$, $1\leq \thedimmm\le\thedim$.
\item Computable maps $\varphist{\thedimmm} \colon \Yst{\thedimmm} \to \Pst{\thedimmm}$,
$1\leq\thedimmm \le\thedim$.
\item
Computable maps representing Postnikov classes $\kkk_{\thedimmm} \colon \Pst{\thedimmm-1}^\cofr \to \overline WK(\dpi_\thedimmm(\dY),\thedimmm)$,
$1<\thedimmm\le\thedim$.
\end{itemize}
\end{definition}

\begin{thm}[Precise formulation]\label{t:postnikov-restat}
Let $\thedim\geq 2$ be fixed, let $\dY \colon \icat \to \sSet$ be a finite pointwise homologically effective diagram such that every space in the diagram $\dY$ is
$1$-connected. Then there is an algorithm that computes the pointwise homologically effective $n$-stage Postnikov system for $\dY$.
\end{thm}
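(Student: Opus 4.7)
\medskip

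\noindent\textbf{Proof plan.} The proof proceeds by induction on $m \leq n$, following the classical inductive construction of the Postnikov tower but carried out in the homologically effective setting for diagrams. For the base case, since every $Y(i)$ is $1$-connected, so that $\pi_0(Y) = \pi_1(Y) = 0$, I would set $P_1$ to be the (extension of the) constant diagram on the terminal object and $\varphi_1 \colon Y \to P_1$ the unique map; all axioms are vacuous. The inductive step will be to construct the data $(P_m, \varphi_m, k_m, \pi_m(Y))$ from $(P_{m-1}, \varphi_{m-1}, \ldots, \pi_{m-1}(Y))$, for $2 \le m \le n$.

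\medskip

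\noindent\textbf{Computing $\pi_m(Y)$.} Given pointwise homologically effective $P_{m-1}$, first apply Proposition~\ref{prop:cofibrep} to obtain the (genuinely, not just pointwise) homologically effective cofibrant replacement $P_{m-1}^{\cofr}$; here it is essential that Bousfield--Kan is functorial and the resulting strong equivalence is a natural transformation, not just a collection of pointwise strong equivalences. Next, by the inductive hypothesis, $\varphi_{m-1}$ is an iso on $\pi_j$ for $j < m$, so that its homotopy fiber is $(m-1)$-connected and its $m$-th homotopy group is $\pi_m(Y)$; by Hurewicz, this in turn equals $H_{m+1}$ of the mapping cone $\dCone$ of the induced chain map $(\varphi_{m-1})_*\colon C_*(Y) \to C_*(P_{m-1}^{\cofr})$. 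This mapping cone is pointwise homologically effective by Lemma~\ref{lem:product} (applied pointwise), so that by Proposition~\ref{prop:gen} one may compute its effective homology in dimension $m+1$. I define the effective diagram $\pi_m(Y) := H_{m+1}(\dCone)$, with its functoriality in $\icat^\op$ inherited from that of $\dCone$.

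\medskip

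\noindent\textbf{Constructing $k_m$ and $P_m$.} The Hurewicz-type isomorphism identifies $\pi_m(Y)$ with the cohomology of $\dCone^{\ef}$ in degree $m+1$, and I would pick a representing cocycle $z \in Z^{m+1}_{\ef}(\dCone; \pi_m(Y))$. The splitting $\dCone^{\ef} = C_*(Y)^{\ef}[-1] \oplus C_*(P_{m-1}^{\cofr})^{\ef}$ decomposes $z$ into components $(z_Y, z_P)$, and the cocycle equation translates to $z_P$ being an effective $(m+1)$-cocycle on $P_{m-1}^{\cofr}$ with $\varphi_{m-1}^*\, z_P = \delta z_Y$. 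Pulling $z_P$ back along the natural strong equivalence (as in the paragraph on representing maps by effective cocycles) yields a cocycle in $Z^{m+1}(P_{m-1}^{\cofr}; \pi_m(Y))$, which corresponds to the desired Postnikov class $k_m \colon P_{m-1}^{\cofr} \to \overline W K(\pi_m(Y), m)$ via Proposition~\ref{prop:EML-map}. I then define $P_m$ as the pullback of the path--loop fibration along $k_m$, which is pointwise homologically effective by Corollary~\ref{c:pullback}. The companion cochain $z_Y$ provides a canonical nullhomotopy of $k_m \circ \varphi_{m-1}$ (again via Proposition~\ref{prop:EML-map}), and the universal property of the pullback then produces $\varphi_m \colon Y \to P_m$ refining $\varphi_{m-1}$.

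\medskip

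\noindent\textbf{Main obstacle and verification.} The main technical hurdle is bookkeeping the naturality and effectivity at the same time: the cochain representing $k_m$ is manufactured on the \emph{effective} side of a strong equivalence, but must be re-interpreted as an actual simplicial map of diagrams $P_{m-1}^{\cofr} \to \overline W K(\pi_m(Y), m)$, which is why Proposition~\ref{prop:cofibrep} (giving a \emph{natural}, not merely pointwise, strong equivalence on $P_{m-1}^{\cofr}$) is indispensable. The second delicate point is the verification of Postnikov axiom~(1), i.e.\ that $\varphi_m$ induces isomorphisms on $\pi_j$ for $j \le m$ and kills $\pi_j$ for $j > m$ in $P_m$; this uses the long exact sequence of the principal fibration $K(\pi_m(Y), m) \to P_m \to P_{m-1}^{\cofr}$ together with the choice of $z$ as representing the identity on $H_{m+1}(\dCone)$, exactly as in the non-equivariant arguments of~\cite{polypost}. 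Polynomial running time follows because all operations invoked --- Bousfield--Kan replacement, mapping cones, perturbation lemmas, Smith normal form in Proposition~\ref{prop:gen}, and pullbacks of Eilenberg--MacLane fibrations --- have polynomial time versions as summarized at the end of Section~\ref{sec:effective}.
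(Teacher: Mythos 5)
Your overall plan coincides with the paper's algorithm (mapping cone of the map to the previous stage, $\pi_m := H_{m+1}$ of its effective model, the retraction of Proposition~\ref{prop:gen} producing the cocycle whose two components give the Postnikov class $k_m$ and the lift data, then the pullback of Corollary~\ref{c:pullback}). However, there is a genuine gap in how you handle the domain. You cofibrantly replace only $P_{m-1}$ and then work with ``the induced chain map $(\varphi_{m-1})_*\colon C_*(Y)\to C_*(P_{m-1}^{\cofr})$'' and later with a map $\varphi_m\colon Y\to P_m$ obtained from the universal property of the pullback over $P_{m-1}^{\cofr}$. Neither exists as stated: $\varphi_{m-1}$ lands in $P_{m-1}$, not in its Bousfield--Kan replacement, and since $Y$ is not cofibrant as a diagram there is no algorithmic (or even canonical) lift of $\varphi_{m-1}$ through $P_{m-1}^{\cofr}\to P_{m-1}$. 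The only functorial way to obtain the needed map is to replace the \emph{map} $\varphi_{m-1}$, i.e.\ to replace $Y$ as well, so the domain changes at every stage: $\Yst{m}=\Yst{m-1}^{\cofr}$, and $\varphi_m$ is defined on $\Yst{m}$, not on $Y$. This is exactly why the paper packages the Postnikov system as a map of towers and why the definition of a pointwise homologically effective Postnikov system has components $\varphist{m}\colon \Yst{m}\to\Pst{m}$.

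The same omission breaks the effectivity bookkeeping you single out as the main obstacle. With $Y$ un-replaced, $C_*(Y)$ is only \emph{pointwise} homologically effective (that is the hypothesis on $Y$), so the mapping cone you form is, as you yourself write, only pointwise homologically effective; but then the strong equivalence to its effective model is not a natural transformation, the composite $f\colon \dCone\to\dCone^{\ef}$ is not a map of diagrams, and the cochains $z_P$, $z_Y$ you extract are not elements of the Bredon cochain complexes -- so $k_m$ cannot be produced as a simplicial map of diagrams $P_{m-1}^{\cofr}\to\overline WK(\pi_m,m)$ via Proposition~\ref{prop:EML-map}. In the paper both ingredients of the cone are genuinely homologically effective precisely because both are cofibrant replacements (Proposition~\ref{prop:cofibrep} applied to $\varphi_{m-1}$), and the cone is then homologically effective as a diagram by Lemma~\ref{lem:product}. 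Once you correct the construction to replace $\varphi_{m-1}$ (hence iterate replacements of $Y$) and formulate the output as a map of towers, the rest of your argument -- identification of $\pi_m(Y)$, the cocycle equation $\varphi^*z_P=\delta z_Y$, the pullback step, the verification of the Postnikov axioms via the principal fibration, and the polynomial-time analysis -- matches the paper's proof.
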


\subsection{Description of the algorithm}

The algorithm we present here is in fact a modification of an algorithm that constructs a Postnikov tower for 1--connected simplicial sets presented in \cite{polypost}. The main difference can be seen in the application of Proposition~\ref{prop:gen}, which will be stressed later.

The following is a pseudo-code for the algorithm in Theorem~\ref{t:postnikov-restat}:

\begin{enumerate}
[topsep=2pt,labelsep=1em,labelindent=0.5em,leftmargin=*,label=\textbf{(\arabic*)},align=left]
\item\label{bas:s} Set $\Yst{0} = \dY$, set $\Pst{0} = \{ * \}$  and construct the (obvious) map $\varphist{0}\colon \Yst{0} \to\Pst{0}$.
\item For $\thedimmm = 1$ to $\thedim$ do:
\item\label{iter:f} Compute the cofibrant replacement of $\varphist{\thedimmm -1}$ using Proposition~\ref{prop:cofibrep}. We thus obtain
\[
\varphist{\thedimmm -1}^\cofr\colon \Yst{\thedimmm}  = \Yst{\thedimmm -1} ^\cofr \longrightarrow \Pst{\thedimmm -1}^\cofr.
\]

\item\label{iter:s}
Construct the homologically effective mapping cone $\dM\coloneqq\dCone(\varphist{\thedimmm-1}^\cofr)$, with a strong equivalence $\dM \steq \dM^\ef$ to an effective diagram $\dM^\ef$.
\item\label{step:retraction} Compute a retraction $r\colon \dM^\ef _{\thedimmm+1} \to \dZ_{\thedimmm+1}  (\dM^\ef)$ using Proposition~\ref{prop:gen}. 
\item Compute the homology group $\dH_{\thedimmm+1}(\dM^\ef)$ and the composite morphism \[\rho\colon\dM^\ef _{\thedimmm+1}\stackrel{r}{\longrightarrow}\dZ_{\thedimmm+1}  (\dM^\ef) \to \dH_{\thedimmm+1}(\dM^\ef).\]
\item Set $\dpi_\thedimmm \coloneqq \dH_{\thedimmm+1}(\dM^\ef)$.
\item Denoting by $f\colon{\dM}_{\thedimmm+1} \to {\dM}_{\thedimmm+1}^\ef$ the composite chain homomorphism in the given strong equivalence, consider the composition
\[\dC_\thedimmm(\Yst{\thedimmm}) \oplus \dC_{\thedimmm+1} ({\Pst{\thedimmm-1}^\cofr}) = \dM_{\thedimmm+1} \xlra{f} \dM^\ef _{\thedimmm+1}\stackrel{\rho}{\longrightarrow}\pi_\thedimmm\]
This yields, by restriction, a cochain $\lambda_\thedimmm\colon\dC_\thedimmm(\Yst{\thedimmm})\to \dpi_\thedimmm$. Compute the simplicial map $\ell_\thedimmm:\Yst{\thedimmm} \to WK(\dpi_\thedimmm,\thedimmm)$  corresponding to $\lambda_\thedimmm$ using Proposition~\ref{prop:EML-map}.

The other restriction $\kappa_{\thedimmm} \colon \dC_{\thedimmm+1} ({\Pst{\thedimmm-1}^\cofr})\to \dpi_{\thedimmm}$ is a cocycle. Compute the corresponding simplicial map $\kkk_{\thedimmm}\colon\Pst{\thedimmm-1}^\cofr \to \overline WK(\dpi_\thedimmm,\thedimmm)$, again via Proposition~\ref{prop:EML-map}.

\item\label{step:pullback}Apply Corollary~\ref{c:pullback} to obtain $\Pst{\thedimmm}$ as a pullback in the diagram
\begin{equation}\label{eq:towerpullback}
\xymatrix@C=40pt{
{} & \Pst{\thedimmm} \ar@{->>}[d] \ar[r] & WK(\dpi_\thedimmm,\thedimmm) \ar@{->>}[d]^{\cobo} \\
\Yst{\thedimmm} \ar[r]^{\varphist{\thedimmm-1}^\cofr}  \ar@/^45pt/[urr]^{\ell_{\thedimmm}} \ar[ur]^{\varphist{\thedimmm}}& \Pst{\thedimmm-1}^\cofr \ar[r]^-{\kkk_{\thedimmm}} & \overline WK(\dpi_\thedimmm,\thedimmm)
} 
\end{equation}
and set $\varphist{\thedimmm} = (\varphist{\thedimmm -1}^\cofr,\ell_\thedimmm)$ as the map to the pullback $\Pst{\thedimmm}$.
\end{enumerate}
\subsection{Correctness of the algorithm}\label{sec:correctness}
The correctness of the algorithm follows nearly directly from the proof of Theorem~4.1 in~\cite{polypost}, where this is proven ``pointwise''.  Here, one has to show further that maps $r,\rho, \lambda_\thedimmm, \ell_\thedimmm, \kappa_{\thedimmm}, k_{\thedimmm}$ are well defined morphisms of diagrams, which is a matter of technical verification and is described in full in~\cite{Filakovsky2016thesis}. Further, we use the fact that ${\Pst{\thedimmm-1}^\cofr}$ is homologically effective, hence so is the mapping cone $M$ and Proposition~\ref{prop:gen} may be applied.

\subsection{Computational complexity}
Considering the computational complexity, the algorithm described in Theorem~\ref{t:postnikov-restat}, can be formulated as follows:

\begin{theorem}\label{t:postnkov_poly}
Let $n$ and $\mcI$ be fixed. The association $Y \mapsto P$, that takes a 1-connected diagram $Y$ of finite simplicial sets and gives its pointwise homologically effective $n$-stage Postnikov tower, is a polynomial time construction.
\end{theorem}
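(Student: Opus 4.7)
The plan is to argue inductively on $\thedimmm = 1, \ldots, \thedim$ that each pass through the loop of the algorithm described for Theorem~\ref{t:postnikov-restat} is a polynomial time construction, so that composing $\thedim$ of them (a fixed number) yields the result. The base case $\Pst 0 = \{*\}$ is trivial, and by Proposition~\ref{prop:propagating_polynomial_time_families} it suffices to check that, assuming $\varphist{\thedimmm-1}\colon \Yst{\thedimmm-1} \to \Pst{\thedimmm-1}$ is polynomial time (as a map between pointwise homologically effective diagrams), each individual step produces polynomial time output.

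First I would appeal to the polynomial time version of Proposition~\ref{prop:cofibrep}, explicitly noted in the ``Computational complexity'' subsection of Section~\ref{sec:effective}, to conclude that $\Yst{\thedimmm} = \Yst{\thedimmm-1}^\cofr$ and $\Pst{\thedimmm-1}^\cofr$ are polynomial time pointwise homologically effective diagrams, and the induced $\varphist{\thedimmm-1}^\cofr$ is computable in polynomial time. The polynomial time version of Lemma~\ref{lem:product}(3) then gives a polynomial time pointwise homologically effective mapping cone $\dM = \dCone(\varphist{\thedimmm-1}^\cofr)$, together with the effective model $\dM^\ef$ and the strong equivalence. Next, the polynomial time version of Proposition~\ref{prop:gen}, itself a consequence of the existence of a polynomial time Smith normal form algorithm, produces the retraction $r$ and hence the composite $\rho \colon \dM^\ef_{\thedimmm+1} \to \dpi_\thedimmm = \dH_{\thedimmm+1}(\dM^\ef)$ in polynomial time; in particular, $\dpi_\thedimmm$ is obtained as an effective diagram of abelian groups in polynomial time by Lemma~\ref{l:ker_coker}.

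The restrictions $\lambda_\thedimmm$ and $\kappa_\thedimmm$ of $\rho \circ f$ to the two summands of $\dM_{\thedimmm+1} = \dC_\thedimmm(\Yst{\thedimmm}) \oplus \dC_{\thedimmm+1}(\Pst{\thedimmm-1}^\cofr)$ are computable in polynomial time because $f$ is, and the corresponding simplicial maps $\ell_\thedimmm$ and $\kkk_\thedimmm$ are obtained via the polynomial time bijection of Proposition~\ref{prop:EML-map} together with Section~\ref{s:EM_diagrams_comp}. Finally, the pullback~\eqref{eq:towerpullback} is handed to the polynomial time version of Corollary~\ref{c:pullback}, which outputs $\Pst{\thedimmm}$ as a polynomial time pointwise homologically effective diagram, and the pair $\varphist{\thedimmm} = (\varphist{\thedimmm-1}^\cofr, \ell_\thedimmm)$ to the pullback is obviously polynomial time as well.

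The only subtle point, and what I expect to be the main obstacle, is that the ``output'' of each step of the loop is not just a chain complex or a diagram of simplicial sets, but a whole package (locally effective diagram, effective model, strong equivalence, auxiliary maps) whose polynomial time status must be tracked uniformly; here the bookkeeping really amounts to invoking the polynomial time versions of the constructions summarised at the end of Section~\ref{sec:effective}, and, via Proposition~\ref{prop:propagating_polynomial_time_families}, the composition of polynomial time constructions is polynomial time. Since $\thedim$ and $\mcI$ are fixed, the number of iterations and the combinatorics of $\mcI$ enter only as constants, so the overall association $Y \mapsto P$ is a polynomial time construction, completing the proof.
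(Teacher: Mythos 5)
Your proposal is correct and follows essentially the same route as the paper: induction over the fixed number of stages, with the cofibrant replacement handled by the polynomial time version of Proposition~\ref{prop:cofibrep} and the remaining steps of the loop (mapping cone, retraction via Smith normal form as in Proposition~\ref{prop:gen}, Eilenberg--MacLane maps, and the pullback of Corollary~\ref{c:pullback}) by the polynomial time constructions summarised at the end of Section~\ref{sec:effective}, composed via Proposition~\ref{prop:propagating_polynomial_time_families}. The only cosmetic slip is citing Lemma~\ref{lem:product}(3) (the mapping cylinder) for the mapping cone, but the same perturbation-lemma toolkit covers it, so this is not a gap.
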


\begin{proof}
We split the algorithm of Theorem~\ref{t:postnikov-restat} into two inductive claims:
\begin{enumerate}
\item\label{step:cofibr-rep}Given a polynomial time computable map $\varphist{\thedimmm - 1} \colon \Yst{\thedimmm -1} \to \Pst{\thedimmm -1}$ between polynomial-time pointwise homologically effective diagrams, its cofibrant replacement $\varphist{\thedimmm - 1}^\cofr$ is a polynomial time map between polynomial time homologically effective diagrams $\Yst{\thedimmm}$, $\Pst{\thedimmm - 1}^\cofr$. 
\item Given a polynomial time computable map $\varphist{\thedimmm-1}^\cofr \colon \Yst{\thedimmm} \to \Pst{\thedimmm - 1}^{\cofr}$ between poly\-nomial-time homologically effective diagrams, the remaining data in the diagram~\eqref{eq:towerpullback} consists of polynomial time pointwise homologically effective diagrams and polynomial time computable maps.
\end{enumerate}
First point follows from the proof of Proposition~\ref{prop:cofibrep}. The second point is achieved ``pointwise'' using methods from~\cite{polypost}, thus they are polynomial time. The only difference is in the computation of retraction  $r\colon \dM^\ef _{\thedimmm+1} \to \dZ_{\thedimmm+1}  (\dM^\ef)$ which is polynomial time by Lemma~\ref{prop:gen}.
\end{proof}
\section{Applications}\label{sec:applications}


\subsection{The Tverberg-type problem}\label{s:tverberg}
In the article~\cite{MabillardWagner:Elim_II_SoCG-2016}, Mabillard and Wagner formulated the following generalization of the classical Haefliger-Weber theorem:
\begin{theorem}[Theorem 1 in \cite{MabillardWagner:Elim_II_SoCG-2016}]
Let $K$ be a (finite) simplicial complex of dimension $k$ and $r,d \in \mathbb{N}$ such that $r\geq 2, d - k \geq 3$ and $rd \geq (r+1)k +3$. Then there is an $r$-almost embedding $f\colon K \to \bbR^d$ if and only if there exists a $\mathbb{S}_r$-equivariant map $K^{r} \setminus \Delta_r \rightarrow S^{d(r-1)-1}$.
\end{theorem}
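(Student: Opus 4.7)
The statement is essentially the generalized Haefliger--Weber theorem of Mabillard and Wagner, and the proof proceeds by establishing the two implications separately, with very different levels of difficulty.

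The plan for the easy direction ($\Rightarrow$) is direct and geometric. Suppose $f\colon K \to \bbR^d$ is an almost $r$-embedding, so for any $r$ pairwise disjoint simplices $\sigma_1, \ldots, \sigma_r$ of $K$ we have $f(\sigma_1)\cap\cdots\cap f(\sigma_r) = \emptyset$. First pass to a simplicial approximation on a suitable subdivision so that the product map $f^r\colon K^r \to (\bbR^d)^r$ is simplicial; the almost $r$-embedding property guarantees that the restriction of $f^r$ to the deleted product $K^r \setminus \Delta_r$ avoids the thin diagonal $D = \{(y_1,\ldots,y_r) \mid y_1 = \cdots = y_r\}$ of $(\bbR^d)^r$. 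Next, compose with the canonical $\mathbb{S}_r$-equivariant deformation retraction of $(\bbR^d)^r \setminus D$ onto the unit sphere in the hyperplane $\{(y_1,\ldots,y_r) \mid y_1 + \cdots + y_r = 0\}$, which carries a permutation action of $\mathbb{S}_r$ and is homeomorphic as an $\mathbb{S}_r$-space to $S^{d(r-1)-1}$. Composition yields the desired equivariant map $K^r \setminus \Delta_r \to S^{d(r-1)-1}$. This direction does not need the metastable hypothesis; it only uses the almost $r$-embedding property.

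The plan for the hard direction ($\Leftarrow$) is the real content of the theorem and is where the dimension assumption $rd \geq (r+1)k+3$ enters. Starting from an arbitrary PL map $g\colon K \to \bbR^d$ in general position, its $r$-fold self-intersection locus
\[S_r(g) = \{(x_1,\ldots,x_r) \in K^r \setminus \Delta_r \mid g(x_1) = \cdots = g(x_r)\}\]
is generically a manifold of dimension $rk - (r-1)d$, which under the assumption is at most $k - 3$, low enough that the $r$-fold Whitney trick of Mabillard--Wagner can be deployed. The strategy is then to use the hypothesized equivariant map to produce an $\mathbb{S}_r$-equivariant cobordism of $S_r(g)$ to the empty set inside $K^r \setminus \Delta_r$, and then to realize this cobordism geometrically by an equivariant regular homotopy of $g$ that eliminates the $r$-fold intersections pairwise. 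Concretely, one pairs up intersection points (or intersection strata) of opposite signs using the equivariant nullhomotopy data, spans them by equivariant Whitney discs, and pushes one sheet across each disc; the codimension $\geq 3$ condition ensures that generic Whitney discs can be made disjoint from the remaining sheets and that the push is embedded. After finitely many such moves, all $r$-fold intersections are removed, yielding the desired almost $r$-embedding.

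The main obstacle, by a wide margin, will be carrying out the hard direction rigorously. The $r$-fold Whitney trick is much more delicate than its classical ($r=2$) counterpart: pushing one sheet across a Whitney disc may create new unwanted intersections among the other $r-2$ sheets, and keeping track of the equivariance under the full symmetric group $\mathbb{S}_r$ (as opposed to just $\mathbb{Z}/2$) imposes nontrivial constraints on the discs and on the ambient isotopies. This is precisely why the stronger metastable-type hypothesis $rd \geq (r+1)k+3$ is needed (as opposed to the naive $rd > (r-1)k + k$ that would only ensure codimension $\geq 1$ intersection loci), and checking that all the moves can simultaneously be made $\mathbb{S}_r$-equivariant and that disjointness of Whitney discs can be arranged is the technical heart of \cite{mabillard_tverberg} and its companion papers. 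For our purposes, however, we take the theorem as given and only use its output: an equivalence between the geometric Tverberg problem and an equivariant extension problem that lies within the scope of Theorem~\ref{cor:equiv_main}.
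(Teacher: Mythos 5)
This statement is quoted verbatim from Mabillard--Wagner (Theorem~1 of \cite{MabillardWagner:Elim_II_SoCG-2016}); the paper gives no proof of it and uses it as a black box in the proof of Theorem~\ref{thm:tverberg}, so there is no internal argument to compare yours against. Your sketch of the forward direction (almost $r$-embedding $\Rightarrow$ equivariant map) is correct and essentially complete: the product map restricted to $K^r\setminus\Delta_r$ misses the thin diagonal precisely by the almost-embedding property, and the equivariant retraction of $(\bbR^d)^r$ minus the diagonal onto $S^{d(r-1)-1}$ finishes it (simplicial approximation is not even needed for mere existence of a continuous equivariant map). Your account of the reverse direction correctly identifies the generalized Whitney-trick strategy and the dimension count ($rk-(r-1)d\le k-3$, which indeed follows from $d-k\ge 3$), but, as you acknowledge, it defers the entire technical content --- equivariant pairing of intersection strata, disjointness of Whitney discs, control of newly created intersections --- to the cited papers. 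As a standalone proof it is therefore only an outline of the hard direction; as a reading of how this paper uses the result, it is exactly right: the theorem is taken as given, and only its conclusion feeds into Theorem~\ref{cor:equiv_main}.
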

Here the space $K^{r} \setminus \Delta_r $ is the $r$-fold product of $K$, with the ``fat'' diagonals removed. Its cells can be viewed as $r$-tuples $(\sigma_1, \ldots \sigma_r)$, where $\sigma_i \in K$ and $\sigma_i \cap \sigma_j = \emptyset$ for $i\neq j$. The action of the symmetric group ${\mathbb{S}_r}$ on $K^{r} \setminus \Delta_r$ is induced by the permutation action on $K^{r}$ and is thus free. 

The sphere $S^{d(r-1)-1}$ is homotopy equivalent to $(\bbR^d)^r \setminus \delta_r$, for $\delta_r = \{(y,\cdots, y)|\, y\in \bbR^d\}$, i.e.\ the $r$-fold products with the ``thin'' diagonal removed and the action on $S^{d(r-1)-1}$ is induced by the action on $(\bbR^d)^r$, which permutes the $r$-factors. It follows that the action on $\mathbb{S}_r$, has fixed points and for any $H\leq \mathbb{S}_r$, we get $((\bbR^d)^r \setminus \delta_r)^H \sim (S^{d(r-1)-1})^H = S^{dq-1}$, where $0\leq q \leq (r-1)$. We remark that given $K \leq \mathbb{S}_r$ subconjugate to $H$, we get $(S^{d(r-1)-1})^H \subseteq (S^{d(r-1)-1})^K$.
\begin{remark}
We remark that the proof of Mabillard-Wagner theorem in~\cite{mabillard_tverberg_proof} was criticised by  A.~Skopenkov, who summarized his critique in~\cite{Skopenkov17-1} and proved the theorem in~\cite[Theorem~1.2]{Skopenkov17-2}.
\end{remark}
\begin{proof}[Proof of Theorem~\ref{thm:tverberg}]
The proof is a consequence of Theorem~\ref{cor:equiv_main} and Remark~\ref{rem:subcategory}. We notice that $\conn ((S^{d(r-1)-1})^H) = \conn(S^{dq-1}) = dq-2$, $0\leq q \leq (r-1)$, i.e. in some cases the connectivity can be less than one. From conditions $r\geq 2, d - k \geq 3$ and $rd \geq (r+1)k +3$, we can see that if for some $H\subset \mathbb{S}_r$,  $(S^{d(r-1)-1})^H \neq \emptyset$, then $\conn (S^{d(r-1)-1})^H \geq 1$. 

Let $\jcat$ be the full subcategory of $\OG$ on the objects $G/H$ where $(S^{d(r-1)-1})^H$ is nonempty. By Remark~\ref{rem:subcategory}, we get
\[
[K^{r} \setminus \Delta_r ,S^{d(r-1)-1}]_{\sSetOG} \cong [K^{r} \setminus \Delta_r ,S^{d(r-1)-1}]_{\sSetJ}.
\]
Conditions $\dim (K^{r} \setminus \Delta_r)(j)\leq 2\conn (S^{d(r-1)-1})(j)$ and $\conn (S^{d(r-1)-1})(j) \geq 1$ are  satisfied for all objects $j\in  \jcat$, so it remains to check that $rd \geq (r+1)k +3$, $r >2, d - k \geq 3, $ implies $rk \leq 2 (d(r-1)-2)$. The application of Theorem~\ref{thm:main_formulation} gives us the result.
\end{proof}
\subsection{Equivariant stable homotopy groups of spheres}
In this section, we describe how Theorem~\ref{cor:equiv_main} can be applied to the computation of equivariant stable homotopy groups of $G$-spaces (represented as $G$-simplicial sets). We showcase this on the example of $\bbZ_2$ equivariant stable homotopy groups of spheres. We are using~\cite[Chapter IX]{may1996equivariant} as our source of definitions in this section.

\heading{Basic notions}
Let $G$ be a finite group, and let $V$ be a \emph{representation of }$G$, i.e.\ a real inner product space (in our case we always assume a finite dimensional vector space over~$\mathbb{R}$) on which $G$ acts via linear isometries.\footnote{One can see the representation also as a homomorphism $G \to O(V)$.}
For a representation $V$, we have the unit disc $D(V) = \{x \in V, \lVert x \rVert \leq 1\}$ and sphere $S(V) = \{x \in V, \lVert x \rVert \leq 1\}$ and finally $S^V$ - a one point compactification (or $S^V \cong D(V)/S(V)$).

For a based $G$-space $X$, we write $\Sigma^V X = X\wedge S^V$. As an application of Freudenthal suspension theorem, we obtain that the map
\[
\Sigma^V \colon [X,Y]_G \to [\Sigma^V X,\Sigma^V Y]_G
\]
is surjective if
\begin{enumerate}[label=(\roman*)]
\item\label{freud:1} $\dim (X^H) \leq 2\conn (Y^H) + 1$ for all subgroups $H$ such that $V^H \neq 0$.
\item\label{freud:2} $\dim (X^H)\leq \conn(Y^K)$ for all pairs of subgroups $K\leq H$ with $V^K\neq V^H$.
\end{enumerate}
and bijective if the inequality is strict.

Let us describe how one can see $S^{V_0}$, where $V_0$ is the regular representation of a finite group $G$ (over $\mathbb{R}$): The regular representation is a vector space of dimension $|G|$ and, identifying the coordinate unit vectors with the elements of $G$, the group $G$ acts by swapping them. Clearly, the dimension of $S^{V_0}$ is $|G|$. From here one can deduce a simplicial set model of $S^{V_0}$. For example, for $G = \mathbb{Z}_2$, the space $S^{V_0}$ is just a 2-sphere and we can model it by glueing two discs $D_+, D_-$ along their boundary (the equator), group acts by switching the discs, while it keeps the equator fixed, thus $(S^{V_0})^{ \mathbb{Z}_2}\cong S^1$. In simplicial sets, we can thus model $S^{V_0}$ by having two $2$-simplices $\sigma, \tau$ and glue them along their faces, say by setting $d_0 \sigma = d_0 \tau$ and $d_1 \sigma = d_1 \tau = d_2 \sigma  = d_2 \tau = s_0 d_0 d_0 \sigma$ is the basepoint. The action of $\mathbb{Z}_2$  swaps $\sigma$ and $\tau$.

\begin{Def}[IX, Definition 2.1, \cite{may1996equivariant}]
A $G$-universe $U$ is a countable direct sum of representations such that $U$ contains a trivial representation and also contains each of its sub-representations infinitely often. Thus we can write $U$ as a direct sum of subspaces $(V_i)^\infty$, where $\{V_i\}$ runs through the set of distinct irreducible representations of $G$.
Universe is \emph{complete} if, up to isomorphism, it contains every irreducible representation of $G$. If $G$ is finite and $V$ is its regular representation, then $U  = V^\infty$ is a complete $G$-universe. A finite dimensional sub-$G$ space of $U$ is said to be an \emph{indexing space} in $U$.
\end{Def}

We define the equivariant stable homotopy classes of maps $X \to Y$ as
\[
\{X,Y\}_G = \colim_V\, [\Sigma^V X,\Sigma^V Y]_G
\]
Where $V$ goes through the indexing spaces in $U$ and the colimit is taken over functions 
\[ (-\wedge S^{W-V}) \colon [\Sigma^V X,\Sigma^V Y]_G \to [\Sigma^W X,\Sigma^W Y]_G ; V\subset W
\]
that are given by sending a map $\Sigma^V X \to \Sigma^V Y$ to its smash product with the identity of $S^{W-V}$.
\begin{corollary}[IX, Corollary 2.3, \cite{may1996equivariant}]\label{cor:equivstable}
If $G$ is finite and $X$ is finite dimensional, the Frudenthal suspension theorem implies the existence of a finite dimensional representation $V_0 = V_0 (X)$ such that for any representation $V$
\[
 \Sigma^{V} \colon [\Sigma^{V_0} X,\Sigma^{V_0} Y]_G \to [\Sigma^{V_0 \oplus V} X,\Sigma^{V_0 \oplus V} Y]_G 
\]
is an isomorphism.

The definition of $\{X,Y\}_G$ then gives us 
\[
\{X,Y\}_G = [\Sigma^{V_0} X,\Sigma^{V_0} Y]_G.
\]
\end{corollary}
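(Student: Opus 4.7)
My plan is to deduce the statement directly from the Freudenthal suspension theorem as already recalled just before the statement, the key point being to find a single $V_0$ that works uniformly for all further suspensions $\Sigma^V$. The whole argument is reduced to an arithmetic of dimensions and connectivities of fixed point sets, using the identity $(\Sigma^V Z)^H = \Sigma^{V^H} Z^H$. This gives
\[
\dim((\Sigma^{V_0} X)^H) = \dim X^H + \dim V_0^H, \qquad \conn((\Sigma^{V_0} Y)^H) = \conn Y^H + \dim V_0^H,
\]
and the analogous formulas after further suspension by $V$. So the task is to choose $V_0$ so that conditions \ref{freud:1}, \ref{freud:2} (with strict inequality, to get bijectivity) are satisfied for the pair $(\Sigma^{V_0}X, \Sigma^{V_0}Y)$ and arbitrary $V$.

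Plugging in, condition \ref{freud:1} for the suspension $\Sigma^V \colon [\Sigma^{V_0}X,\Sigma^{V_0}Y]_G \to [\Sigma^{V_0\oplus V}X,\Sigma^{V_0\oplus V}Y]_G$ simplifies, after cancelling $\dim V^H$ twice on one side and once on the other, to
\[
\dim V_0^H > \dim X^H - 2\conn Y^H - 1
\]
(required for those $H$ with $V^H \ne 0$, but one may impose it for all $H$). Similarly, condition \ref{freud:2} for pairs $K\leq H$ with $V^K\ne V^H$ simplifies to
\[
\dim V_0^K - \dim V_0^H > \dim X^H - \conn Y^K,
\]
which must hold whenever $K < H$ (since, given any proper inclusion of subgroups, the regular representation already distinguishes their fixed subspaces, and the universe $U$ is complete).

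Both sets of inequalities range over finitely many pairs of subgroups of $G$, so it suffices to exhibit a single $V_0$ making $\dim V_0^H$ large enough and each gap $\dim V_0^K - \dim V_0^H$ (for $K<H$) large enough. Taking $V_0 = N \cdot \mathbb{R}[G]$, a large multiple of the regular representation, we have $\dim V_0^H = N\,|G|/|H|$, which tends to infinity in $N$, and for $K<H$ the difference $\dim V_0^K - \dim V_0^H = N(|G|/|K| - |G|/|H|)$ is a strictly positive multiple of $N$; hence for $N$ sufficiently large both families of inequalities are satisfied. The main (but very mild) obstacle is just bookkeeping -- keeping track that the required gaps really are $>0$ for all pairs $K<H$, which is guaranteed by the regular representation.

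With such $V_0 = V_0(X,Y)$ in hand, every map $\Sigma^V$ in the directed system defining $\{X,Y\}_G$, starting from $V_0$, is a bijection. Since the indexing spaces in the complete universe $U$ are cofinal above $V_0$, the colimit stabilises and
\[
\{X,Y\}_G \;=\; \colim_V [\Sigma^V X,\Sigma^V Y]_G \;\cong\; [\Sigma^{V_0} X, \Sigma^{V_0} Y]_G,
\]
as claimed.
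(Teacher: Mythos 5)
Your argument is correct and follows essentially the same route as the paper: suspend by a large multiple $V_0=N\,\mathbb{R}[G]$ of the regular representation, use $(\Sigma^{V}Z)^H=\Sigma^{V^H}Z^H$ to reduce the two Freudenthal conditions to finitely many inequalities on $\dim V_0^H$ and on the gaps $\dim V_0^K-\dim V_0^H$ for $K<H$, and note these gaps are positive multiples of $N$ since $\dim(\mathbb{R}[G])^H=[G:H]$. The paper's proof makes exactly this choice (writing the key property as $\dim U^K>\dim U^H>0$ for the regular representation $U$), so no further comparison is needed.
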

\begin{proof}
The main idea is to choose a finite dimensional $G$-representation $U$ satisfying $\dim (U^K) >\dim (U^H)>0$ for any pair of subgroups $K \leq H \leq G$. Clearly,  the regular representation, seen as the $\mathbb{R}[G]$-module $\mathbb{R}[G]$, is such a representation. It is enough to observe that for an element $x =\sum_{k\in K} k$ , it is true that $xk =x$, but for any $h\in H\setminus K$, we get $xh \neq x$. 

From a simple dimension and connectivity comparison it follows that there exists an integer $k$ such that for $V_0 = kU = \underbrace{U \oplus \cdots \oplus U}_{k \text{-times}}$, we get
\begin{itemize}
\item $\dim ((\Sigma^{V_0} X)^H) < 2\conn((\Sigma^{V_0} Y)^H)+1$;
\item $\dim ((\Sigma^{V_0} X)^H)  < \conn((\Sigma^{V_0} Y)^K)$ for all pairs of subgroups $K\leq H$.
\end{itemize}
The inequalities  \ref{freud:1}, \ref{freud:2} then imply the result.
\end{proof}
From the corollary above and the definition of $\{X,Y\}_G$, we get that for finite simplicial sets  with an action of a finite group $G$ there exists a (finite dimensional) $G$-representation $V$ such that
\[
[\Sigma^{kV} X,\Sigma^{kV} Y]_G = \{X,Y\}_G.
\]

\begin{example}
Let us detail this general procedure in the case of the stable $\mathbb{Z}_2$-equivariant homotopy groups: from the discussion above, $\{S^n, S^0\}_{\mathbb{Z}_2}$ is isomorphic to $[\Sigma^{kV} S^n, \Sigma^{kV} S^0]_{\mathbb{Z}_2}$, where $V$ is the regular representation of $\Z_2$. In this case, we have $S^V = S^2$ and $(S^V)^{\mathbb{Z}_2} = S^1$ and we can model this space as a simplicial set with two $2$-cells, one $1$-cell and one $0$-cell.

Thus $\dim(\Sigma^{kV}S^n) = 2k +n $, $\conn \Sigma^{kV}S^n = 2k +n -1$, $\dim((\Sigma^{kV}S^n)^{\mathbb{Z}_2} = k + n$  and  $\conn (\Sigma^{kV}S^n)^{\mathbb{Z}_2} = k+n-1$. As we further have $\Sigma^{kV}S^0 = \Sigma^{kV}$, we conclude that we are in the stable range if $n < k-1$, thus to compute the $n$-th stable homotopy group of $S^0$, we should use $n+1$ fold suspension with the regular representation.

\end{example}

\heading{Proof of Theorem~\ref{thm:equivstable1}}
According to Corollary~\ref{cor:equivstable}, $\{X,Y\} = [\Sigma^{V_0} X,\Sigma^{V_0} Y]_G$, where $V_0 = \underbrace{V \oplus \cdots \oplus V}_{k \text{-times}}$ is a sum of $k$ regular representations of~$G$.

Elmendorf's theorem then implies that 
\[
[\Sigma^{V_0} X,\Sigma^{V_0} Y]_G \cong [\Phi \Sigma^{V_0} X,\Phi \Sigma^{V_0} Y]_G
\]
where the latter set is computable by our main result.

It remains to compute $k$ which depends only on $\dim \Phi X$ and $\conn \Phi Y$ -- we pick the smallest $k\in \mathbb{N}$ such that formulas \ref{freud:1} and \ref{freud:2} are satisfied.

\qed

\vfill
\vbox{\footnotesize%
\noindent\begin{minipage}[t]{0.45 \textwidth}
\raggedright
{\scshape Luk\'a\v{s} Vok\v{r}\'inek}\\
Department of Mathematics and Statistics,\\
Masaryk University,\\
Kotl\'a\v{r}sk\'a~2, 611~37~Brno,\\
Czech Republic
\end{minipage}
\hfill
\noindent\begin{minipage}[t]{0.45 \textwidth}
\raggedright
{\scshape Marek Filakovsk\'{y}}\\
Department of Algebra,\\
Faculty of Mathematics and Physics\\
Charles University,\\
Sokolovsk\'{a} 49/83, 186~00~Prague,\\
Czech Republic
\end{minipage}
}

\end{document}